\def\fullpage {
\addtolength{\topmargin}{-2 cm}
\addtolength{\oddsidemargin}{-1.5cm} \addtolength{\textwidth}{+3 cm}
\addtolength{\textheight}{+4 cm}}
\newcommand{\C}[1]{{\protect\cal #1}}
\newcommand{\beq}[1]{\begin{equation}\label{eq:#1}}
\newcommand{\eeq}{\end{equation}}
\begin{document}
\newtheorem{theorem}{Theorem}
\newtheorem{corollary}[theorem]{Corollary}
\newtheorem{lemma}[theorem]{Lemma}
\newtheorem{claim}[theorem]{Claim}
\newtheorem{proposition}[theorem]{Proposition}
\newtheorem{conjecture}[theorem]{Conjecture}
\newcommand\eps{\varepsilon}

\newcommand{\brac}[1]{\left(#1\right)}
\newcommand{\bfrac}[2]{\brac{\frac{#1}{#2}}}
\newcommand{\rdown}[1]{\left\lfloor#1\right\rfloor}
\newcommand{\rdup}[1]{\left\lceil#1\right\rceil}
\newcommand{\me}{\mathrm{e}}
\newcommand{\ee}{\epsilon}
\newcommand{\ex}{\mathrm{ex}}
\newcommand{\Bin}{\mathrm{Bin}}
\newcommand{\cG}{{{\cal G}}}
\newcommand{\cK}{{\cal K}}
\newcommand{\cH}{{\cal H}}
\newcommand{\cP}{{\cal P}}
\newcommand{\cJ}{{\cal J}}
\newcommand{\cE}{{\cal E}}
\newcommand{\cB}{{\cal B}}
\newcommand{\cM}{{\cal M}}
\def\Q{\mathcal{Q}}
\def\C{\mathcal{C}}
\newtheorem{definition}[theorem]{Definition}
\def\FF{\mathcal{F}}
\def\NN{\mathcal{N}}
\def\F{\mathcal{F}}
\def\S{\mathcal{S}}
\def\SS{\mathcal{S}}
\def\eps{\varepsilon}
\newcommand{\eee}{{\mathbb E}}
\def\HH{\mathcal{H}}
\parindent=0pt

\title{Almost all triple systems with independent neighborhoods
are semi-bipartite}

\author{J\'ozsef Balogh\thanks{{Department of
Mathematics, U.C. California at San Diego, 9500 Gilmann Drive, La
Jolla, Department of Mathematics; and  University of Illinois, 1409
W. Green Street, Urbana, IL 61801, USA}; e-mail:
{jobal@math.uiuc.edu;} research  supported in part   by NSF CAREER
Grant DMS-0745185 and DMS-0600303, UIUC Campus Research Board Grants
09072 and 08086, and OTKA Grant K76099.}  \quad  and \quad Dhruv
Mubayi\thanks{Department of Mathematics, Statistics, and Computer
Science, University of Illinois at Chicago, IL 60607;  email:
mubayi@math.uic.edu; research  supported in part by  NSF grant DMS
0653946.}}

\maketitle

\vspace{-0.4in}

\begin{abstract}
The neighborhood of a pair of vertices $u,v$ in a triple system is the set of vertices $w$ such that $uvw$ is an edge.
 A triple system
 $\HH$ is semi-bipartite if its vertex set
contains a vertex subset $X$ such that every edge of $\HH$
intersects $X$ in exactly two points. It is easy to see that if
$\HH$ is semi-bipartite, then the neighborhood of every pair of
vertices in $\HH$  is an independent set. We show a partial converse
of this statement by proving that almost all triple systems with
 vertex sets $[n]$ and independent neighborhoods are semi-bipartite. Our result can be
viewed as an extension
 of the Erd\H os-Kleitman-Rothschild theorem to triple systems.

The proof uses the Frankl-R\"odl hypergraph regularity lemma, and stability
theorems. Similar results have recently been proved for hypergraphs with various other local constraints.
\end{abstract}

\section{Introduction}

This is the second in a sequence of papers where we describe the global structure of typical $k$-uniform hypergraphs
($k$-graphs for short) that satisfy certain local conditions. This line of research originated with the seminal result of
 Erd\H os-Kleitman-Rothschild \cite{EKR} which proved that almost all triangle-free graphs with vertex set $[n]$ are bipartite.
  Our goal is to prove a hypergraph version of this theorem.

Subsequent to  \cite{EKR},
there has been much work concerning the number and structure of
$F$-free graphs with vertex set $[n]$ (see, e.g. \cite{EFR, KPR,
PS1, BBS1, BBS2, BBS3}). The results essentially state that for a
large class of graphs $F$, most of the $F$-free graphs with vertex
set $[n]$ have a similar structure to the $F$-free graph with the
maximum number of edges. Many of these results use  the Szemer\'edi
Regularity Lemma.

With the recent development of the hypergraph regularity Lemma, one can prove similar theorems for hypergraphs.
 For brevity, we refer to a $3$-uniform hypergraph as a triple system or 3-graph.
 The first result in this direction was due to Nagle and R\"odl \cite{NR} who proved that the
  number of $F$-free triple systems (for fixed triple system $F$) on vertex set $[n]$ is
$$2^{ex(n, F) + o(n^3)},$$
where $ex(n,F)$ is the maximum number of edges in an $F$-free triple
system on $n$ vertices.
 Due to the absence of a general extremal result for hypergraphs in the vein of Tur\'an's graph theorem,
  one cannot expect hypergraph results that completely parallel the graph case. Still, there has been recent progress on various specific examples. Person and Schacht~\cite{PSch} proved that almost all
   triple systems on $[n]$ not containing a Fano configuration are $2$-colorable.
   The key property that they used was the linearity of the Fano plane,
   namely the fact that every two edges of the Fano plane share at most one vertex.
   This enabled them to apply the (weak) $3$-graph regularity lemma, which is almost
   identical to Szemer\'edi's Regularity Lemma.  They then proved an embedding lemma for linear
    hypergraphs essentially following ideas from Kohayakawa-Nagle-R\"odl-Schacht \cite{KNRS}.

It is well-known that such an embedding lemma fails to hold for
non-linear $3$-graphs unless one uses the (strong) $3$-graph
regularity lemma, and operating in this environment is more
complicated.

The first fine result for non-linear hypergraphs was due to the
current authors \cite{t5}. It was proved in \cite{t5} that typical  extended triangle-free
triple systems are tripartite, where an extended triangle is
$\{abc,abd,cde\}$.  The corresponding extremal result, that the maximum number of triples on $[n]$ with no extended
triangle is achieved by a complete tripartite triple system, was proved by Bollob\'as \cite{bollobas:74} and is  the first extremal hypergraph result for a nondegenerate problem.
  In this paper we give a similar result for a different non-linear triple system.

The neighborhood of a $(k-1)$-set $S$ of vertices in a $k$-graph is the set of vertices
$v$ whose union with $S$ forms an edge. A set is independent if it contains no edge.
We can rephrase Mantel's theorem about triangle-free graphs as follows: the maximum number of
 edges in an $n$ vertex  2-graph with independent neighborhoods is $\lfloor n^2/4 \rfloor$.
 This formulation can be generalized to $k>2$ and there has been quite a lot of recent
 activity on this question (\cite{MR, FPS, FMP, BFMP}).

 Let us first observe that a triple system has independent neighborhoods if and only if it contains no copy of
$$T_5=\{123, 124, 125, 345\}.$$  Say that a triple system is {\em semi-bipartite} if it has an (ordered)
 vertex partition $(X,Y)$ such that every edge has exactly one point in $Y$. Then a short case analysis shows that
  all neighborhoods in a semi-bipartite triple system are independent (one can think of semi-bipartite triple systems
   as an analogue of bipartite graphs). Let $B^3(n)$ be the $3$-graph with
the maximum number of edges among all $n$ vertex semi-bipartite
triple systems. Note that
$$b^3(n):=|B^3(n)|=\max_a {a \choose 2}(n-a)=(4/9+o(1)){n \choose 3}$$
is achieved by choosing $a=\lfloor 2n/3 \rfloor$ or $a=\lceil 2n/3
\rceil$.

The second author and R\"odl \cite{MR} conjectured, and F\"uredi,
Pikhurko, and Simonovits \cite{FPS} proved, that among all $n$
vertex $3$-graphs ($n$ sufficiently large) containing no
 copy of  $T_5$, the unique one with the maximum number of edges is $B^3(n)$.

Let  $\SS(n)$ be the set of (labeled)  semi-bipartite $3$-graphs
with vertex set  $[n]$ and put $S(n):=|\SS(n)|$. Let $I(n)$ be the
number of (labeled) $3$-graphs with vertex set $[n]$ and independent
neighborhoods, by which we mean that for every $x,y\in [n]$ there is
no $e \in \HH$ with $e \subset \{z:\ xyz\in \HH\}$.
 Our main result, which is a possible extension of the Erd\H os-Kleitman-Rothschild
 theorem to triple systems, is the following:

\begin{theorem}\label{maint}
Almost all triple systems with independent neighborhoods and vertex
set $[n]$ are semi-bipartite. More precisely there is a constant $C$
such that \begin{equation}\label{approxmain}  (1+
2^{-4n})S(n)\ <\ I(n)\ < \ (1+C\cdot
2^{-n/10})S(n).\end{equation}\end{theorem}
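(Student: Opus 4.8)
\emph{The lower bound.} I would exhibit an explicit family $\mathcal{F}$ of $T_5$-free non--semi-bipartite $3$-graphs on $[n]$ with $|\mathcal{F}|>2^{-4n}S(n)$; since every semi-bipartite $3$-graph is $T_5$-free, this gives $I(n)\ge S(n)+|\mathcal{F}|>(1+2^{-4n})S(n)$. Fix $[n]=X\cup Y$ with $|X|=\lfloor 2n/3\rfloor$, so the complete semi-bipartite $3$-graph $G_{X,Y}$ (all triples with exactly two points in $X$) has $b^3(n)$ edges; fix $x_1\in X$, $y_1,y_2\in Y$, distinct $x_2,x_3,x_4\in X\setminus\{x_1\}$ and $y^\ast\in Y\setminus\{y_1,y_2\}$. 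Let $\mathcal{F}$ be the set of $3$-graphs $H$ such that (i) $e:=\{x_1,y_1,y_2\}\in H$; (ii) $\{x_2,x_3,y^\ast\},\{x_2,x_4,y^\ast\},\{x_3,x_4,y^\ast\},\{x_2,x_3,y_1\},\{x_2,x_3,y_2\}\in H$; (iii) $\{x_1,y_1,x\},\{x_1,y_2,x\}\notin H$ for all $x\in X\setminus\{x_1\}$; (iv) $H$ has no edge outside $G_{X,Y}\cup\{e\}$. Each $H\in\mathcal{F}$ is $T_5$-free: $H\setminus\{e\}\subseteq G_{X,Y}$ is semi-bipartite, hence $T_5$-free, so any copy of $T_5$ uses $e$; by (iii) and (iv) each of the pairs $\{x_1,y_1\},\{x_1,y_2\},\{y_1,y_2\}$ lies in exactly one edge of $H$, namely $e$, so $e$ cannot be one of the three edges through the central pair of a $T_5$, and $e$ cannot be the fourth edge either, as that needs an edge of $H$ with all three vertices in $X$. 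No $H\in\mathcal{F}$ is semi-bipartite: for a witnessing $(X',Y')$, the triangle on $\{x_2,x_3,x_4\}$ with apex $y^\ast$ in (ii) forces $x_2,x_3,x_4\notin Y'$ and $y^\ast\in Y'$, then $\{x_2,x_3,y_1\},\{x_2,x_3,y_2\}$ force $y_1,y_2\in Y'$, contradicting that $e$ has at most one point in $Y'$. The members of $\mathcal{F}$ are distinct and $|\mathcal{F}|=2^{\,b^3(n)-5-2(|X|-1)}\ge 2^{\,b^3(n)-2n}$ for $n$ large, whereas $2^{-4n}S(n)\le 2^{-4n}\cdot 2^{n}\cdot 2^{b^3(n)}=2^{\,b^3(n)-3n}$; hence $|\mathcal{F}|>2^{-4n}S(n)$.

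\emph{The upper bound: reduction and structure.} Writing $N(n):=I(n)-S(n)$ for the number of $T_5$-free non--semi-bipartite $3$-graphs on $[n]$, and using $S(n)\ge 2^{b^3(n)}$, it suffices to show $N(n)\le C'\,2^{\,b^3(n)-n/10}$, folding in the partition count at the end. The first step uses the Frankl--R\"odl hypergraph regularity lemma together with its counting lemma and a $T_5$-removal lemma, fed the F\"uredi--Pikhurko--Simonovits extremal theorem and a stability version of it, to obtain a container-type statement: for some $\rho=\rho(n)\to0$, all but at most $2^{(1-c)b^3(n)}$ of the $T_5$-free $3$-graphs $H$ on $[n]$ admit a partition $[n]=X\cup Y$ with $\bigl||X|-\tfrac{2n}{3}\bigr|\le\rho n$ for which the set $B$ of ``bad'' edges (edges with zero or two points in $Y$) satisfies $|B|\le\rho n^3$. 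The sparse $T_5$-free $3$-graphs, with fewer than $\rho n^3$ edges, are absorbed here since $\binom{\binom n3}{\le\rho n^3}\ll 2^{b^3(n)}$ for $\rho$ small, and the $2^{(1-c)b^3(n)}$ genuine exceptions are negligible against $2^{\,b^3(n)-n/10}$. So we are reduced to counting the ``essentially semi-bipartite'' $T_5$-free $3$-graphs that are not semi-bipartite; for each one fix a partition $(X,Y)$ with fewest bad edges (ties broken by a fixed rule), and write $H=H'\cup B$ with $H'=H\cap G_{X,Y}$ semi-bipartite and $B\neq\varnothing$.

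\emph{The upper bound: local forcing and counting.} The crux is a local forcing lemma. For a bad edge $e=\{x_1,y_1,y_2\}$ of ``two-in-$Y$'' type (the $XXX$- and $YYY$-types force strictly more and are handled by analogous lemmas), a short case analysis of the at most four ways $e$ can lie in a copy of $T_5$ whose other three edges are semi-bipartite shows that $T_5$-freeness of $H$ forces $N_{H'}(x_1,y_1)\cap X$ to be independent in the link $L_{H'}(y_2)|_X=\{\{x,x'\}\subseteq X:\{x,x',y_2\}\in H'\}$, and symmetrically $N_{H'}(x_1,y_2)\cap X$ independent in $L_{H'}(y_1)|_X$. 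Thus $H'$ must be ``thin'' near any bad edge, and for fixed $(x_1,y_1,y_2)$ the probability over a uniformly random subgraph $H'$ of $G_{X,Y}$ that even one of these conditions holds is at most $\mathbb{E}\bigl[2^{-\binom{D}{2}}\bigr]$ with $D\sim\Bin(|X|-1,\tfrac12)$, which is $2^{-|X|+o(n)}=2^{-2n/3+o(n)}$. Summing over the at most $n^3$ choices of $e$, for a fixed partition the number of semi-bipartite $H'\subseteq G_{X,Y}$ admitting any addable bad edge is at most $2^{\,b^3(n)-2n/3+o(n)}$; after accounting for the number of bad-edge sets $B$ that can be appended to a given such $H'$ (organised by the location and size of the ``holes'' in $H'$, so that an $H'$ admitting exponentially many addable bad edges is compensated by the many missing semi-bipartite edges it must have), one still gets at most $2^{\,b^3(n)-\Omega(n)}$ with a generous implied constant. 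Summing over the at most $\binom{n}{\lfloor 2n/3\rfloor+\rho n}$ relevant partitions and comparing with $S(n)\ge\binom{n}{\lfloor 2n/3\rfloor}2^{b^3(n)}$ then yields $N(n)\le C\,2^{-n/10}S(n)$, since the per-partition exponential loss comfortably beats the entropy $H(1/3)n$ of the partition count plus the $n/10$ margin.

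\emph{Main obstacle.} Regularity alone only gives $I(n)=2^{\,b^3(n)+o(n^3)}$, matching Nagle--R\"odl; the real difficulty is upgrading this $o(n^3)$ slack to an \emph{exponentially} sharp $(1+2^{-\Omega(n)})S(n)$. That requires the local forcing of Step 2 to be made genuinely quantitative and, simultaneously, controlling: (a) the multiplicity with which a single $3$-graph is ``close'' to several semi-bipartite structures, which forces a careful canonical choice of the partition $(X,Y)$; (b) the fact that one semi-bipartite $H'$ may spawn many non--semi-bipartite $T_5$-free $3$-graphs, to be balanced against the rarity of such $H'$ (the ``holes'' bookkeeping); and (c) the $XXX$- and $YYY$-type bad edges, which need their own forcing lemmas even though they are intuitively more costly. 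Making all the constants cooperate so that the within-a-partition loss wins by an $n/10$ margin is where the technical weight of the paper lies.
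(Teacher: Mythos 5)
Your lower bound is correct but follows a genuinely different construction from the paper's. The paper fixes $\mathcal{F}=\binom{\{1,2,n-1,n\}}{3}$, a complete $K_4^{(3)}$ straddling the two classes, and deletes from $\mathcal{G}$ every consistent triple with two or more points in $\{1,2,n-1,n\}$; you instead plant a single inconsistent edge $e=\{x_1,y_1,y_2\}$ together with a five-edge gadget on $\{x_2,x_3,x_4,y^*\}$ that chains forcings $y^*\in Y'\Rightarrow x_2,x_3,x_4\in X'\Rightarrow y_1,y_2\in Y'$, and you delete only the $2(|X|-1)$ consistent triples through $\{x_1,y_1\}$ or $\{x_1,y_2\}$. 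I checked the $T_5$-freeness case analysis (each pair of $e$ lies in exactly one edge, ruling out $e$ as one of the three edges through the central pair; and $e$ as the fourth edge would require a consistent triple with three points in $X$) and the non-semi-bipartiteness chain; both work, and both constructions yield $2^{b^3(n)-O(n)}>2^{-4n}S(n)$. Your deletion set is smaller, so this part is arguably cleaner than the paper's.

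On the upper bound the high-level strategy you describe is right, but the step you flag as the ``main obstacle'' is not a technical refinement of your sketch that can be grafted on afterwards --- it is where the paper's argument has a qualitatively different shape, namely a \emph{simultaneous induction on $n$} running through Theorems~\ref{maint}, \ref{clean}, and \ref{badtriple}. After Theorem~\ref{stablet} reduces to $Forb(n,T_5,\eta)$, the paper does not immediately compare against $S(n)$ within a partition. Instead it first prunes: the conditions (1)--(4) of Section~\ref{secinconedges} isolate a bounded number of offending vertices (a vertex in $X$ with large $L_{Y,Y}$, a vertex in $Y$ with large $L_{Y,Y}$, a vertex of an $\alpha$-rich edge, a vertex of a type-$0$ or type-$3$ inconsistent edge), and Lemmas~\ref{1}--\ref{4} bound the number of offenders in terms of $|Forb(n-1,T_5)|$ or $|Forb(n-3,T_5)|$ by deleting those vertices. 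The inductive hypothesis Theorem~\ref{maint}$(n')$ for $n'<n$ then converts these into bounds against $S(n-1)$ or $S(n-3)$, which Lemma~\ref{mon}(ii) lifts back to $S(n)$. Only after all such vertices are excluded --- so the remaining inconsistent edges are few, non-rich, and of type-$1$ --- does the argument become a direct comparison against $S(n)$, organized via the ``shadow graph'' of inconsistent pairs (Lemmas~\ref{clean2} and~\ref{bipcom}), with a bipartite-degree double-count.

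Your proposal has no inductive re-absorption and no analogue of the rich-edge or shadow-graph preprocessing, and this is exactly where it would fail to close. The local-forcing estimate $\mathbb{E}[2^{-\binom{D}{2}}]=2^{-2n/3+o(n)}$ for the number of semi-bipartite $H'$ admitting a fixed addable bad edge $e$ is fine, but then one must sum over the bad-edge \emph{sets} $B$ that can accompany a given $H'$, and a ``thin'' $H'$ can admit exponentially (in $n^2$ or worse) many such $B$. The hand-wave that the multiplicity of $B$ is ``compensated by the many missing semi-bipartite edges $H'$ must have'' is precisely the content of the paper's Lemma~\ref{bipcom}, and it only works \emph{after} the shadow graph has been forced to have $O(\alpha n^2)$ edges (Lemma~\ref{clean2}), the edges have been forced non-rich (condition (3)), and the type-$0$/type-$3$ bad edges have been eliminated by induction. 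Also, your final comparison uses $S(n)\ge\binom{n}{\lfloor 2n/3\rfloor}2^{b^3(n)}$; this is plausibly true up to a $1-o(1)$ factor but itself needs a uniqueness-of-partition argument, whereas the paper only needs the weaker $S(n)\ge 2^{b^3(n)-O(n^2)}$ from Lemma~\ref{mon}(i). So the upper bound, as sketched, has a genuine gap: the bookkeeping step is the heart of the proof and requires the inductive architecture, the condition-(1)--(5) pruning, and the shadow-graph machinery that the sketch omits.
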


\section{Broad proof structure}

The lower bound in Theorem \ref{maint} will be proved by constructing a large class of triple systems that are not
 semi-bipartite but yet have independent neighborhoods. This will be done  in  Section~\ref{lower}.
  The majority of the paper is devoted to proving the upper bound in Theorem~\ref{maint}. We will do this in two stages.
   First, we will prove that a large majority of  triple systems with vertex set $[n]$
and independent neighborhoods are very close to being semi-bipartite.  This is formalized in Theorem~\ref{stablet} below.
 After this, we can confine our attention to triple systems with independent neighborhoods that are close to being semi-bipartite.
  We will show (see Theorem~\ref{clean}) that most of these triple systems are semi-bipartite.
Let us proceed more formally.

For a hypergraph $F$ let  $Forb(n, F)$ denote the set of   $F$-free
hypergraphs on vertex set $[n]$. Let $P=(X, Y)$ be an ordered vertex
partition of a 3-graph $\HH$. Call an edge of $\HH$ {\em consistent}
with $P$ if it has exactly two points in $X$, otherwise call it {\em
inconsistent}. Let $D_P$ be the set of inconsistent edges with $P$.
A vertex partition $P$ is {\em optimal} for $\HH$ if it minimizes
the number of inconsistent edges, and let $D=D_{\HH}$  be the number
of inconsistent edges in an optimal partition of $\HH$. Define

$$Forb(n, T_5, \eta):=\left\{\HH \subset \binom{[n]}{3}: T_5 \not\subset \HH\hbox{ and } D_{\HH}\le \eta n^3\right\}.$$

The  proof of Theorem \ref{maint} can be separated into two parts;
 Theorem~\ref{stablet}, proved in Section 4 and 5 and Theorem~\ref{clean},
 proved in Section 6. Note that the proof of Theorem~\ref{stablet} is independent from the rest of the
 results. However, both Theorems~\ref{maint} and \ref{clean} are proved via induction on $n$: In the proof of the $n$-statement of Theorem~\ref{maint} we use the $n'$-statement of Theorem~\ref{clean} for every $n'\le n$, and in the proof of the $n$-statement of Theorem~\ref{clean} we use the $n'$-statement of Theorem~\ref{stablet} for every $n'< n$.  This will be made more precise in Section 6.6.

\begin{theorem} \label{stablet}
For every $\eta>0$, there exists $\nu>0$ and $n_0$ such that
 if $n>n_0$, then $$|Forb(n,T_5) - Forb(n, T_5, \eta)|<2^{(1-\nu)\frac{2n^3}{27}}.$$
\end{theorem}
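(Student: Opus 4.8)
The plan is to combine the strong (Frankl--R\"odl) hypergraph regularity lemma --- together with an embedding lemma for the non-linear configuration $T_5$, in the spirit of what was done for the extended triangle in \cite{t5} --- with a quantitative stability version of the F\"uredi--Pikhurko--Simonovits extremal theorem, and then to count. Fix $\eta>0$; we must bound the number of $\HH\in Forb(n,T_5)$ with $D_{\HH}>\eta n^3$. Given such an $\HH$, apply hypergraph regularity to obtain an equitable partition $[n]=V_1\cup\cdots\cup V_t$ with $t=t(\eta)$ a (large) constant, together with $\gamma$-regular bipartite graphs partitioning each $K[V_i,V_j]$, so that all but at most $\delta n^3$ edges of $\HH$ lie in \emph{dense regular} triads (the exceptions being the edges in irregular triads, in triads over irregular pairs, in triads of density below a threshold $d_0$, or with fewer than three distinct parts); here $\delta=\delta(\eta)$ can be made as small as we please by choosing $\gamma$, $d_0$ and a lower bound on $t$ appropriately. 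Let $\mathcal R\subseteq\binom{[t]}{3}$ record the dense regular triads.

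Next observe that $\mathcal R$ is $T_5$-free and, moreover, far from semi-bipartite. For the first point: the shadow of $T_5$ is $K_5$, so a copy of $T_5$ in $\mathcal R$ over clusters whose underlying bipartite graphs are all dense and $\gamma$-regular --- which, after the routine cleaning that deletes a bounded fraction of sparse or irregular pairs, holds for every copy of $T_5$ in $\mathcal R$ --- would, by the embedding lemma, force a genuine $T_5$ in $\HH$, contradicting $\HH\in Forb(n,T_5)$. For the second point: if some partition of $[t]$ left at most $\eps'\binom t3$ edges of $\mathcal R$ inconsistent, then its blow-up along the $V_i$ would be a semi-bipartite partition of $\HH$ with at most $\eps'\binom t3(n/t)^3+\delta n^3\le\eps' n^3/6+\delta n^3$ inconsistent edges, contradicting $D_{\HH}>\eta n^3$ once $\eps'/6+\delta<\eta$. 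Hence every partition of $[t]$ leaves more than $\eps'\binom t3$ edges of $\mathcal R$ inconsistent, and a stability theorem for $T_5$-free $3$-graphs then gives a constant $c=c(\eps')>0$ with $e(\mathcal R)\le(4/9-c)\binom t3$.

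We now count. Any such $\HH$ is determined by: (a) the partition of $[n]$ into the $t$ parts, $t^n=2^{o(n^3)}$ choices; (b) the partition of each $K[V_i,V_j]$ into $\gamma$-regular pieces together with the resulting family $\mathcal R$, in total $2^{O(n^2)}=2^{o(n^3)}$ choices; (c) the edges of $\HH$ inside dense regular triads, of which there are at most $e(\mathcal R)(n/t)^3\le(4/9-c)\binom t3(n/t)^3\le(1-\tfrac{9c}{4})\tfrac{2n^3}{27}$, contributing a factor at most $2^{(1-9c/4)\cdot 2n^3/27}$; and (d) the remaining at most $\delta n^3$ edges, contributing a factor at most $\binom{\binom n3}{\le\delta n^3}\le 2^{H(\delta)n^3}$. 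If $\delta$ was chosen small enough that $H(\delta)<\tfrac{9c}{4}\cdot\tfrac{2}{27}$, the product of (a)--(d) is at most $2^{(1-\nu)2n^3/27}$ for a suitable $\nu=\nu(\eta)>0$, which is the desired bound.

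The real work lies in two places. First, one must fix the whole constant hierarchy $\nu\ll\delta\ll d_0,\gamma,1/t\ll c\ll\eps'\ll\eta$ consistently, so that simultaneously the embedding lemma applies to $T_5$ over dense regular complexes, the blow-up above genuinely certifies $D_{\HH}\le\eta n^3$, and the entropy $H(\delta)n^3$ of reconstructing the discarded edges is dominated by the stability gain $\tfrac{9c}{4}\cdot\tfrac{2n^3}{27}$; because $T_5$ is non-linear one is forced to use the strong $3$-graph regularity lemma, which makes this bookkeeping (and the embedding lemma itself) substantially heavier than in the graph case. Second, one needs the F\"uredi--Pikhurko--Simonovits result in the quantitative stability form used above --- $T_5$-free and $\eps'$-far from semi-bipartite implies at most $(4/9-c(\eps'))\binom t3$ edges, valid already on $t=t(\eta)$ vertices --- which should follow either from their argument or from a short direct stability proof for $T_5$ via the hypergraph removal lemma.
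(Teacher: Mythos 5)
Your outline shares the paper's overall skeleton (strong hypergraph regularity, embedding lemma, FPS stability, then an entropy count), but it has a genuine gap at the central step: the claim that $\mathcal R\subseteq\binom{[t]}{3}$, ``the dense regular triads,'' is $T_5$-free. In the Frankl--R\"odl framework a triad is not just a triple $\{i,j,k\}$ of clusters --- it also involves a choice $(a,b,c)$ of one bipartite graph $P_a^{ij},P_b^{jk},P_c^{ik}$ from the edge-partition of each pair, and the embedding lemma requires a \emph{single coherent underlying cylinder} (one bipartite graph per cluster pair) spanning the whole copy of $T_5$. Four dense regular triads forming $T_5$ on five clusters need not use the same bipartite graph on shared pairs --- the pair $\{1,2\}$ occurs in three edges of $T_5$ --- and when they don't, the embedding lemma is silent and you cannot conclude $T_5\subset\HH$. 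So the projection of the dense regular triads to $\binom{[t]}{3}$ is not in general $T_5$-free, and the stability bound $e(\mathcal R)\le(4/9-c)\binom t3$ in step (c) is not justified.

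This incoherence across overlapping triads is precisely what the paper's proof is built to handle. Rather than a single reduced $3$-graph, the dense regular triads are recorded as a subset $\cJ^C\subset\binom{[t]}{3}\times[l]^3$, and for each function $\phi:\binom{[t]}{2}\to[l]$ selecting one bipartite graph per pair, the slice $\cJ_\phi$ \emph{is} $T_5$-free (here the embedding lemma legitimately applies). The real content is then an averaging argument (Lemma~\ref{fact}) showing that a large $|\cJ^C|$ forces many slices $\cJ_\phi$ to be nearly extremal, after which FPS stability is applied to each slice and --- crucially --- Claim 2 shows the near-optimal partitions of different slices essentially coincide, so that a single blow-up partition of $[n]$ witnesses $D_\HH\le\eta n^3$, contradicting $\HH\notin Forb(n,T_5,\eta)$. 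Your proposal skips this reconciliation across $\phi$'s, which is where most of the work lies; the scaffolding/entropy counting (your steps (a), (b), (d)) and the blow-up observation that a near-semi-bipartite reduced structure would contradict $D_\HH>\eta n^3$ are in line with what the paper does.
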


 We will use the hypergraph regularity lemma due
to Frankl-R\"odl to prove Theorem~\ref{stablet}. In
Section~\ref{hypreg} we introduce the definitions needed to state
this lemma.

\begin{theorem} \label{clean}
For $\eta>0$ sufficiently small there exists a $C'$ such that
\begin{equation}\label{cleanin}|Forb(n, T_5, \eta)|\ <\ (1+C'2^{-n/10}) S(n).\end{equation}
\end{theorem}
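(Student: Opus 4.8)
Since every semi-bipartite triple system is $T_5$-free with $D=0$, we have $\SS(n)\subseteq Forb(n,T_5,\eta)$, so it suffices to bound the number of members of $Forb(n,T_5,\eta)$ that are \emph{not} semi-bipartite; and since $S(n)\ge 2^{b^3(n)}$, it is enough to prove a bound of the form $|Forb(n,T_5,\eta)\setminus \SS(n)|\le 2^{b^3(n)-n/5}$, which is below $2^{-n/10}S(n)$. Fix such an $\HH$ and an optimal partition $P=(X,Y)$, and write $\HH=\HH_c\cup I$ with $\HH_c$ the set of edges consistent with $P$ (a subgraph of the complete semi-bipartite system on $(X,Y)$) and $I=D_P$ the set of inconsistent edges, so $1\le |I|=D_\HH\le \eta n^3$. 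We sum over all $2^n$ choices of $P$; for $|X|$ bounded away from $2n/3$ the number $2^{\binom{|X|}{2}|Y|}$ of choices of $\HH_c$ is small enough that even $2^n$ such partitions contribute below $2^{b^3(n)-n}$, so we may assume $|X|=(2/3+o(1))n$.

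The heart of the argument is a \textbf{local structure lemma}: $T_5$-freeness forces $\HH_c$ to be sparse near each inconsistent edge, with a cost depending on how many of the edge's vertices lie in $X$. If $e=\{p,q,r\}\subseteq X$, then for all $i\in X\setminus e$ and $k\in Y$ one cannot have all three of $\{i,k,p\},\{i,k,q\},\{i,k,r\}$ in $\HH_c$, for otherwise the neighborhood of $\{i,k\}$ would contain the edge $e$; the same holds, \emph{mutatis mutandis}, when $e\subseteq Y$. These $\Omega(n^2)$ constraints sit on pairwise disjoint triples of potential edges, hence kill a $(7/8)^{\Omega(n^2)}=2^{-\Omega(n^2)}$ fraction of the choices of $\HH_c$: such inconsistent edges are ``expensive''. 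The remaining case is an inconsistent edge $e=\{p,s,t\}$ with $p\in X$ and $s,t\in Y$; a short check of where $e$ can lie in a copy of $T_5$ shows the only constraints it imposes on $\HH_c$ are (C1): with $A=\{a\in X\setminus p:\{p,s,a\}\in\HH_c\}$, the link of $t$ in $X$ has no edge inside $A$, and (C2): the symmetric statement with $s,t$ swapped. A counting estimate shows (C1) and (C2) each hold for only a $2^{-(1+o(1))|X|}$ fraction of the $\HH_c$, and — as they constrain disjoint sets of potential edges — together for a $2^{-(1+o(1))\,4n/3}$ fraction: even the cheapest inconsistent edge costs an exponential-in-$n$ factor.

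Now we \textbf{count}, enumerating the non-semi-bipartite $\HH$ by $(P,I,\HH_c)$ and summing over the $\le 2^n$ relevant $P$, over nonempty $I$ with $|I|\le\eta n^3$, and over valid $\HH_c$. If $I$ contains an inconsistent edge with $0$ or $3$ points in $X$, the $2^{-\Omega(n^2)}$ loss bounds the total over all such $(P,I)$ by $2^{b^3(n)-\Omega(n^2)}$. If every edge of $I$ has exactly one point in $X$, then, because the constraints (C1)/(C2) belonging to distinct inconsistent edges act on largely disjoint sets of potential edges, the per-edge losses essentially add: the number of valid $\HH_c$ for such an $I$ is at most $2^{b^3(n)-\Omega(n|I|)}$ (the loss saturating at $\Omega(n^2)$ once $|I|$ is large). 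Since there are at most $n^{3|I|}$ choices of $I$ of a given size, the sum over $I\ne\emptyset$ is dominated by $|I|=1$ and is at most $n^3\cdot 2^{b^3(n)-4n/3+o(n)}$ per partition; multiplying by $2^n$ gives $|Forb(n,T_5,\eta)\setminus\SS(n)|\le 2^{b^3(n)-n/5}$, as desired.

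The \textbf{main obstacle} is making the last step honest when $|I|$ is large, since there are then enormously many choices of $I$. One must quantify how the local-structure loss grows with a suitable notion of the ``spread'' of $I$ (it is $\Omega(n|I|)$ while $|I|$ is not too large, and $\Omega(n^2)$ otherwise), and, in the genuinely hard case where $D_\HH$ is large but the inconsistent edges concentrate on a small vertex set $W$, pass to $\HH-W$: this is a $T_5$-free triple system on $n'<n$ vertices, to which one applies the $n'$-statement of Theorem~\ref{stablet} (to replace $Forb(n',T_5)$ by $Forb(n',T_5,\eta)$ up to a term that stays negligible even after the $2^{O(n^2)}$ ways of reattaching $W$) together with the inductive hypothesis of Theorem~\ref{clean} for $n'<n$. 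Getting the accounting of this interlocking induction right — as promised in Section~6.6 — together with pinning down that the cheap-edge loss of about $4n/3$ comfortably beats the $\le 2^n$ overcounting from the choice of partition (which is why the conservative exponent $n/10$ appears in the statement), completes the proof.
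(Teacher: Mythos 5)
Your local analysis of $T_5$-constraints is correct: for an inconsistent $e=\{p,q,r\}\subseteq X$ (or $\subseteq Y$) you get $\Omega(n^2)$ disjoint forbidden triples among the consistent ones, and for $e=\{p,s,t\}$ with $p\in X$, $s,t\in Y$ the surviving constraints are exactly your (C1)/(C2), each costing $2^{-(1+o(1))|X|}$ (because $A$ is forced to be nearly empty after integrating over its size). This matches, in spirit, the paper's no-$\alpha$-rich-edge condition (condition (3) and Lemma~\ref{richedges}), which encodes the same phenomenon deterministically. So the identification of ``1 point in $X$'' as the hard case and of the $\approx 4n/3$ per-edge cost is essentially right.

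The genuine gap is the counting step. You parameterize by the pair $(P,I)$ and claim the per-edge losses ``essentially add'' to $\Omega(n|I|)$, saturating at $\Omega(n^2)$. The first part is not justified and is in fact false when many inconsistent edges share a pair: if $I$ contains $\{p,s,t_1\},\dots,\{p,s,t_k\}$ then all $k$ copies of (C1) reuse the same $A=\{a:psa\in\HH_c\}$, and one computes $\sum_{A}2^{-(|X|-1)}2^{-k\binom{|A|}{2}}\approx 2^{-(|X|-1)}$ for any $k$, so the cost stays near $2^{-|X|}$ rather than $2^{-k|X|}$. The saturation at $\Omega(n^2)$ then kills you: for $|I|$ between roughly $n^2/\log n$ and $\eta n^3$ the number of choices of $I$ is $2^{\Theta(|I|\log n)}\gg 2^{\Omega(n^2)}$, and the proposed ``pass to $\HH-W$ and induct'' is not a proof — it is precisely the thing that needs an argument. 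The paper avoids this trap entirely: it never sums over $I$ directly. After reducing to the $\mu$-lower-dense class (Lemma~\ref{t5lower}), removing bad vertices and $\alpha$-rich edges via conditions (1)–(4) (Lemmas~\ref{1}–\ref{4}, each by deleting $O(1)$ vertices and invoking the inductive hypothesis for $Forb(n',T_5)$), it parameterizes the remaining case by the \emph{shadow graph} $G=\bigcup_{y\in Y}L_{X,Y}(y)$, which has only $O(n^2)$ edges. A direct count handles $|G|\ge 100\alpha n^2$ (Lemma~\ref{clean2}), and for small $|G|$ the key new tool is a bipartite double-counting comparison between the class $\C_i=\{\HH:|G_\HH|=i\}$ and $\SS(n)$ (Lemma~\ref{bipcom}): deleting all edges on $G$ and freely re-adding consistent edges sends each bad $\HH$ to $\ge 2^{(|X|-1)i/2}$ semi-bipartite systems, while each semi-bipartite system receives $\le 2^{(|Y|/2+H(\alpha)n+o(n))i}$ bad preimages, giving $|\C_i|/S(n)\le 2^{-in/9}$ uniformly in how many inconsistent edges there are. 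That double-counting step is what your sketch is missing, and it is exactly what lets one handle large $D_\HH$ without ever estimating how the (C1)/(C2) costs compound.
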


The proof of Theorem~\ref{clean} uses many ideas from
\cite{BBS1,BBS2}: we prove in Section~\ref{sublower}  that most $\HH\in Forb(n,
T_5, \eta)$ have some lower-dense properties, in Section~\ref{subbadvert} that
there are no vertices with many inconsistent edges, and in \ref{secinconedges} we
shall get rid of  all the inconsistent edges. However many elements
of the proof are new, like the using the concept of rich
 edges and the shadow graphs.

\section{Lower bound in Theorem~\ref{maint}}\label{lower}

Let us prove the lower bound in \eqref{approxmain},  by constructing a set $\NN\SS(n)$ of at least $2^{-4n}S(n)$ non-semi-bipartite
$T_5$-free 3-graphs $\HH$ with vertex set $[n]$.  Indeed, this shows that $I(n)-S(n) \ge 2^{-4n}S(n)$ and it follows that $I(n)>(1+2^{-4n})S(n)$.

  Let $s=s(n)$ be the maximum number of
edges that a semi-bipartite 3-graph with vertex set $[n]$ can have, and suppose that this is achieved with class sizes
$t=t(n)$ and $n-t$ (where $t \ge n-t$). Easy calculus shows that $t<2n/3 +2$. Then clearly $$S(n)\le 2^{n+s}.$$
 Let $X=[t]$ and $Y= [n]-[t]$. Set
$${\cal F}= {\{1,2,n-1, n\}\choose 3}.$$
Let $\cal G$ be the collection of triples $e$ that simultaneously satisfy the following two conditions:

$\bullet$ $|e \cap X|=2$,

$\bullet$  $|e \cap \{1,2,n-1,n\}| \le 1$. \quad (*)

Let $\NN\SS(n)$ be
the collection of 3-graphs
$\{{\cal F} \cup {\cal G}': {\cal G}' \subset {\cal G}\}.$
 We will now show that $\NN\SS(n)$ comprises only non-semi-bipartite $T_5$-free 3-graphs.
 Pick an $\HH \in \NN\SS(n)$.

Since ${\cal F}$ is not semi-bipartite, $\HH$ is also not
semi-bipartite. Using (*), an easy case analysis shows that $T_5
\not\subset \HH$. Finally, we must obtain a lower bound on
$|\NN\SS(n)|=2^{|{\cal G}|}$.  Recall that $s={t \choose 2}(n-t)$.
Since we exclude all triples with two or more points in
$\{1,2,n-1,n\}$ when defining $\cal G$, and $t \le 2n/3+2$,
$$|{\cal G}| =s-(n-t+4(t-2)+2)\ge -3n+s=-4n+n+s.$$
Consequently,
$$|\NN\SS(n)|=2^{|{\cal G}|}\ge 2^{-4n} 2^{n+s}\ge 2^{-4n} S(n)$$
and the proof is complete.

\section{Hypergraph Regularity}\label{hypreg}

We quickly define the notions required to state the hypergraph
regularity Lemma.  Details can be found in \cite{NR}.  Throughout we
associate a hypergraph with its edge set.

A $k$-{\it partite cylinder} is a $k$-partite graph $G$ with
$k$-partition $V_1, \ldots, V_k$, and we write $G=\cup_{i<j}
G^{ij}$, where $G^{ij}=G[V_i \cup V_j]$ is the bipartite subgraph
 of $G$ with parts $V_i$ and $V_j$.
 For $B \in \binom{[k]}{3}$, the $3$-partite cylinder $G(B)=\cup_{\{i,j\} \in [B]^2} G^{ij}$ is called a {\it triad}.
  For a $2$-partite cylinder $G$,
 the {\it density} of the pair $(V_1, V_2)$ with respect to $G$ is $d_G(V_1, V_2)=\frac{|G|}{|V_1||V_2|}$.

Given  an integer $l>0$ and real $\epsilon>0$, a $k$-partite
cylinder $G$ is called an $(l, \epsilon, k)$-{\it cylinder} if for
every $i<j$, $G^{ij}$ is $\epsilon$-regular with density $1/l$.
 For a $k$-partite cylinder $G$, let $\cK_3(G)$ denote the
 $3$-graph on $V(G)$ whose edges correspond to triangles of $G$. An easy
consequence of these definitions is the following fact.

\begin{lemma} {\bf (Triangle Counting Lemma)}  \label{tlemma} For any integer $l>0$ and  real $\theta>0$,
 there exists an $\epsilon>0$ such that every $(l,\epsilon,3)$-cylinder $G$ with $|V_i|=m$ for all $i$ satisfies
$$|\cK_3(G)| =(1\pm \theta)\frac{m^3}{l^3}.$$
\end{lemma}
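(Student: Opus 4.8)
The plan is to count the triangles of $G$ vertex by vertex through $V_1$: for $v\in V_1$ the triangles of $G$ containing $v$ are exactly the edges of $G^{23}$ joining $N(v)\cap V_2$ to $N(v)\cap V_3$, so that $|\cK_3(G)|=\sum_{v\in V_1} e_{G^{23}}\big(N(v)\cap V_2,\,N(v)\cap V_3\big)$. Given $l$ and $\theta$, I would first fix $\epsilon>0$ small enough that $\epsilon<1/(2l)$ and that the elementary inequalities $(1-l\epsilon)^{3}(1-4\epsilon)\ge 1-\theta$ and $(1+l\epsilon)^{3}+4l^{3}\epsilon\le 1+\theta$ both hold; such an $\epsilon$ exists since $l$ and $\theta$ are fixed.

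The first step is a degree estimate. Call $v\in V_1$ \emph{typical} if both $|N(v)\cap V_2|$ and $|N(v)\cap V_3|$ lie in the interval $[(1/l-\epsilon)m,\,(1/l+\epsilon)m]$. If more than $\epsilon m$ vertices $v\in V_1$ had $|N(v)\cap V_2|<(1/l-\epsilon)m$, then taking $A$ to be this set and $B=V_2$ would give $|A|,|B|\ge\epsilon m$ with $d_{G^{12}}(A,B)<1/l-\epsilon$, contradicting $\epsilon$-regularity of $G^{12}$; the three analogous deviations (too many neighbors in $V_2$, too few or too many in $V_3$) are excluded the same way. Hence all but at most $4\epsilon m$ vertices of $V_1$ are typical.

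The second step applies regularity of $G^{23}$ inside the neighborhoods. For a typical $v$ one has $|N(v)\cap V_2|,|N(v)\cap V_3|\ge(1/l-\epsilon)m\ge\epsilon m$, so $\epsilon$-regularity of $G^{23}$ yields
$$e_{G^{23}}\big(N(v)\cap V_2,\,N(v)\cap V_3\big)=(1/l\pm\epsilon)\,|N(v)\cap V_2|\,|N(v)\cap V_3|,$$
which lies between $(1/l-\epsilon)^{3}m^{2}$ and $(1/l+\epsilon)^{3}m^{2}$. Summing over the at least $(1-4\epsilon)m$ typical vertices gives $|\cK_3(G)|\ge(1-4\epsilon)(1/l-\epsilon)^{3}m^{3}\ge(1-\theta)m^{3}/l^{3}$ by the first inequality on $\epsilon$. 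For the upper bound I would add to the contribution $m(1/l+\epsilon)^{3}m^{2}$ of the typical vertices the trivial bound $4\epsilon m\cdot m^{2}$ for the at most $4\epsilon m$ atypical ones, obtaining $|\cK_3(G)|\le\big((1+l\epsilon)^{3}+4l^{3}\epsilon\big)m^{3}/l^{3}\le(1+\theta)m^{3}/l^{3}$ by the second inequality on $\epsilon$.

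This is the standard triangle counting lemma for $\epsilon$-regular pairs, so there is no serious obstacle. The only points needing attention are getting the quantifier order right (choosing $\epsilon$ only after $l$ and $\theta$), keeping a typical vertex's neighborhoods above the threshold $\epsilon m$ required to invoke $\epsilon$-regularity of $G^{23}$ (hence the harmless assumption $\epsilon<1/(2l)$, which forces $(1/l-\epsilon)m\ge\epsilon m$), and checking that the three multiplicative factors $(1\pm l\epsilon)$ together with the additive loss coming from the atypical vertices combine to stay within $(1\pm\theta)$ — which is exactly what the two displayed inequalities on $\epsilon$ guarantee.
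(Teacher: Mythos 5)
Your proof is correct and is the standard triangle-counting argument for $\epsilon$-regular triples: count through $V_1$, discard the at most $4\epsilon m$ atypical vertices, apply regularity of $G^{23}$ to the neighborhoods of typical vertices, and choose $\epsilon$ last so the multiplicative and additive losses fit inside $(1\pm\theta)$. The paper does not actually supply a proof of this lemma (it is stated as an ``easy consequence'' of the definitions), so there is nothing in the source to compare against; your argument is exactly the one the authors implicitly have in mind.
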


We now move on to $3$-graph definitions. A $k$-{\it partite
$3$-cylinder} is a $k$-partite $3$-graph $\cH$ with $k$-partition
$V_1, \ldots, V_k$. Here $k$-partite means that every edge of $\cH$
has at most one point in each $V_i$.  Often we will say that these
edges are {\it crossing}, and the edges that have at least two
points in some $V_i$ are {\it non-crossing}. Given $B \in
\binom{[k]}{3}$, let $\cH(B)=\cH[\cup_{i \in B}V_i]$. Given a
$k$-partite cylinder $G$ and $k$-partite $3$-cylinder $\cH$ with the
same vertex partition, say that $G$ {\it underlies} $\cH$ if $\cH
\subset \cK_3(G)$.  In other words, $\cH$ consists  only of
triangles in $G$. When $G$ underlies $\cH$, define the density
$d_{\cH}(G(B))$ of $\cH$ with respect to the triad $G(B)$ as the
proportion of edges of $\cH$ on top of triangles of $G(B)$, if the
latter quantity is positive, and zero otherwise. This definition
leads to the more complicated definition of $\cH$ being $(\delta,
r)$-regular with respect to $G$ or $G(B)$, where $r>0$ is an integer
and $\delta>0$. If in addition $d_{\cH}(G)=\alpha \pm \delta$, then
say that $\cH$ is $(\alpha, \delta, r)$-{\it regular} with respect
to $G$. We will not give the precise definition of $(\alpha, \delta,
r)$-regularity, and it suffices to take this definition as a ``black
box" that will be used later.

For a vertex set $V$, an $(l, t, \gamma, \epsilon)$-partition
 $\cP$ of $\binom{[V]}{2}$ is a partition $V=V_0 \cup V_1 \cup \cdots \cup V_t$
together with a collection of edge disjoint bipartite graphs $P_{a}^{ij}$,
 where $1\le i<j\le t,\  0\le a\le l_{ij} \le l$ that satisfy the following properties:

(i) $|V_0|<t$ and $|V_i|=\lfloor \frac{n}{t} \rfloor:=m$ for each
$i>0$,

(ii) $\cup_{\alpha=0}^{l_{ij}}P_{\alpha}^{ij}=K(V_i, V_j)$ for all
$1\le i<j\le t$, where $K(V_i, V_j)$ is the complete bipartite graph
with parts $V_i, V_j$,

(iii) all but $\gamma{t \choose 2}$ pairs $\{v_i, v_j\}$, $v_i \in V_i, v_j \in V_j$,
are edges of $\epsilon$-regular bipartite graphs $P_{\alpha}^{ij}$, and

(iv) for all but $\gamma{t \choose 2}$ pairs $\{i,j\} \in
\binom{[t]}{2}$, we have $|P_0^{ij}|\le \gamma m^2$ and $d_{
P_{\alpha}^{ij} }(V_i, V_j)=(1\pm \epsilon)\frac{1}{l}$ for all
$\alpha \in [l_{ij}]$.

Finally, suppose that $\cH \subset \binom{[n]}{3}$ is a $3$-graph
and $\cP$ is an $(l, t, \gamma, \epsilon)$-partition of
$\binom{[n]}{2}$ with $m_{\cP}=|V_1|$. For each triad $P \in \cP$,
let $\mu_P=\frac{|\cK_3(P)|}{m_{\cP}^3}$. Then $\cP$ is $(\delta,
r)$-regular if
$$\sum\{\mu_P: \hbox{$P$ is a $(\delta, r)$-irregular triad of $\cP$}\} <\delta\left(\frac{n}{m_{\cP}}\right)^3.$$

We can now state the Regularity Lemma due to Frankl and R\"odl
\cite{FR}.

\begin{theorem} {\bf (Regularity Lemma)} \label{rl}
For every $\delta, \gamma$ with $0<\gamma\le 2\delta^4$, for all
integers $t_0, l_0$ and for all integer-valued functions $r=r(t, l)$
and all functions $\epsilon(l)$, there exist $T_0, L_0, N_0$ such
that every 3-graph $\cH \subset \binom{[n]}{3}$ with $n\ge N_0$
admits a $(\delta, r(t,l))$-regular $(l, t, \gamma,
\epsilon(l))$-partition for some $t,l$ satisfying $t_0 \le t<T_0$
and $l_0\le l<L_0$.
\end{theorem}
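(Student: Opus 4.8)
The plan is to prove Theorem~\ref{rl} by the iterative refinement (``energy increment'') method underlying Szemer\'edi's regularity lemma, run simultaneously on two levels: on the bipartite graphs $P_\alpha^{ij}$ living between pairs of vertex classes, and on the $3$-graph $\cH$ living on top of the triads, with one bounded mean-square potential driving the whole argument.

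\emph{The potential and the increment.} For a current $(l,t,\gamma,\epsilon)$-partition $\cP$ carrying $\cH$ on top, define an index $\mathrm{ind}(\cP)$ to be a weighted average of $d_{\cH}(P)^2$ over the triads $P$ of $\cP$ (weighted by the triangle densities $\mu_P$), plus a secondary term recording the densities $d_{P_\alpha^{ij}}(V_i,V_j)$ of the bipartite pieces; then $\mathrm{ind}(\cP)$ lies between $0$ and an absolute constant, and by convexity it can only increase when $\cP$ is refined. The heart of the matter is a \emph{defect Cauchy--Schwarz}: if $\cP$ is \emph{not} $(\delta,r)$-regular, then by definition the $(\delta,r)$-irregular triads carry relative triangle weight at least $\delta(n/m_{\cP})^3$, and each such triad is accompanied by a witnessing family of at most $r$ sub-objects on which $d_{\cH}$ deviates from its average by more than $\delta$. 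I would pull these witnesses back and use them to refine the bipartite graphs $P_\alpha^{ij}$ (increasing $l$), and, where a witness is carved out by vertex subsets, also refine the vertex partition (increasing $t$). The standard defect inequality then yields a refined partition $\cP'$ with $\mathrm{ind}(\cP')\ge \mathrm{ind}(\cP)+c(\delta)$ for a constant $c(\delta)>0$ independent of $n,t,l$. Since $\mathrm{ind}$ is bounded, the refinement halts after at most $O(1/c(\delta))$ rounds; bounding the growth of $t$ and $l$ per round produces $T_0$ and $L_0$, and $N_0$ is whatever makes $m=\lfloor n/t\rfloor$ large enough to run bipartite regularity at the final scale.

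\emph{Producing the prescribed format.} Within each round, and between each pair $V_i,V_j$, one applies the bipartite version of the regularity lemma with parameter $\epsilon(l)$ to split $K(V_i,V_j)$ into $\epsilon$-regular pieces; a short cleaning step merges and trims these into classes $P_\alpha^{ij}$ of density $(1\pm\epsilon)\tfrac1l$, discarding at most $\gamma m^2$ edges into $P_0^{ij}$ and spoiling at most $\gamma\binom t2$ pairs --- this supplies (ii)--(iv) --- while sweeping a sublinear vertex remainder into $V_0$ supplies (i). The one point needing care is that $r=r(t,l)$ and $\epsilon=\epsilon(l)$ are \emph{functions}, not constants, so the loop must be arranged so that each round first fixes the next values of $t$ and $l$ and only afterwards invokes $r(t,l)$ and $\epsilon(l)$; the hypothesis $\gamma\le 2\delta^4$ is exactly what keeps the contribution of the irregular and low-density pairs negligible against the per-round index gain.

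\emph{The main obstacle.} The genuinely delicate step is the refinement at the $3$-graph level. Regularity of $\cH$ is measured \emph{relative to the underlying triad} $G(B)$, so a witness of $3$-graph irregularity must be converted into a refinement of the underlying \emph{graphs} $P_\alpha^{ij}$ in such a way that (a) the new bipartite pieces are still $\epsilon$-regular with density $\approx 1/l'$, and (b) the promised index gain is genuinely realised rather than absorbed into the reshuffled triangle counts. Controlling this requires the Triangle Counting Lemma (Lemma~\ref{tlemma}) and its quantitative refinements, in order to compare triangle populations of regular cylinders before and after the cut. Once this increment lemma is established, everything else is routine bookkeeping.
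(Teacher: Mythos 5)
This theorem is not proved in the paper at all: it is the Frankl--R\"odl hypergraph regularity lemma, quoted verbatim from \cite{FR} and used as a black box. So there is no internal proof to compare your sketch against; the paper's ``proof'' is the single word ``cite.'' That said, it is still worth examining your sketch on its own merits.

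Your outline correctly identifies the broad architecture of the actual Frankl--R\"odl argument: an energy (index) increment over a two-level partition, with a defect Cauchy--Schwarz step converting irregularity into a strictly positive index gain, and the condition $\gamma\le 2\delta^4$ serving to make the contribution of exceptional pairs negligible. You also correctly flag the hardest point, namely that a witness of $(\delta,r)$-irregularity of $\cH$ relative to a triad must be pushed down to a refinement of the bipartite graphs $P_\alpha^{ij}$ without destroying their $\epsilon$-regularity or dissipating the index gain.

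However, as written this is a sketch of a sketch, and several steps you treat as ``routine bookkeeping'' are in fact where the real work lives. First, the ``short cleaning step'' that merges the output of graph regularity into pieces of density $(1\pm\epsilon)\tfrac1l$ is not short: in Frankl--R\"odl one needs a random slicing argument (partition each regular pair uniformly at random into $l$ slices and verify that with positive probability each slice is $\epsilon'$-regular of the right density), and this is a separate lemma with its own quantifier bookkeeping. Second, the defect Cauchy--Schwarz at the $3$-graph level is genuinely different from the graph case because the ``measure'' against which $d_{\cH}$ is averaged is the triangle density $\mu_P$ of the triad, which itself changes when the underlying bipartite graphs are refined; you acknowledge this but offer no mechanism to decouple the index gain from the reshuffled $\mu_P$, and this is exactly where the Triangle Counting Lemma must enter quantitatively with error terms controlled by $\epsilon(l)$. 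Third, and most seriously, the treatment of the functional parameters $r(t,l)$ and $\epsilon(l)$ is not a matter of ``arranging the loop'' carefully: because the iteration terminates at some $(t,l)$ not known in advance, one needs an a priori upper bound on the terminal $(t,l)$ before one can even choose the $\epsilon$ used in round one, and Frankl--R\"odl resolve this by pre-computing a tower-type bound on the number of rounds and taking a minimum of the function values over the resulting bounded range. Your sketch asserts this can be done but does not supply the bound. None of these gaps is a wrong idea, but collectively they are the proof; what you have written is an accurate table of contents for it.
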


To apply the Regularity Lemma above, we need to define a cluster hypergraph and state an accompanying embedding Lemma,
 sometimes called the Key Lemma.  Given a $3$-graph $\cJ$, let $\cJ^2$ be the set of pairs that lie in an edge of $\cJ$.

{\bf Cluster $3$-graph.} For given constants $k, \delta, l, r,
\epsilon$ and sets $\{\alpha_B: B \in \binom{[k]}{3}\}$ of
non-negative reals,
 let $\cH$ be a $k$-partite $3$-cylinder with parts $V_1, \ldots, V_k$, each of size $m$.  Let $G$ be a graph, and
 $\cJ \subset \binom{[k]}{3}$ be a $3$-graph such that the following conditions are satisfied.

(i) $G=\cup_{\{i,j\} \in \cJ^2} G^{ij}$ is an underlying cylinder of
$\cH$ such that for all $\{i,j\} \in \cJ^2$, $G^{ij}$ is an $(l,
\epsilon, 2)$-cylinder.

(ii) For each $B \in \cJ$, $\cH(B)$ is $(\alpha_B, \delta, r)$-regular with respect to the triad $G(B)$.

Then we say that $\cJ$ is the {\it cluster $3$-graph} of $\cH$.

\begin{lemma} {\bf (Embedding Lemma)} \label{elemma} Let $k \ge 4$ be fixed.
 For all  $\alpha>0$, there exists $\delta>0$ such that for $l>\frac{1}{\delta}$, there exists
  $r, \epsilon$ such that the following holds: Suppose that $\cJ$ is the cluster $3$-graph
  of $\cH$ with underlying cylinder $G$ and parameters $k, \delta, l, r, \epsilon, \{\alpha_B: B \in \binom{[k]}{3}\}$
   where $\alpha_B \ge \alpha$ for all $B \in \cJ$.  Then $\cJ \subset \cH$.
\end{lemma}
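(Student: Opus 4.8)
The plan is to embed the copy of $\cJ$ into $\cH$ greedily, vertex by vertex, using the fact that at each stage the set of admissible images for the next vertex remains large. I would first fix the quantifiers in the order dictated by the statement: given $\alpha$, choose $\delta$ small (in terms of $\alpha$ and $k$), then for each $l>1/\delta$ choose $r$ large and $\epsilon$ small (in terms of $\alpha,\delta,l,k$), so that the Triangle Counting Lemma (Lemma~\ref{tlemma}) applies with error parameter $\theta$ comparable to $\alpha$ on every triad we encounter, and so that $(\alpha_B,\delta,r)$-regularity of $\cH(B)$ with respect to $G(B)$ can be iterated the bounded number of times needed. Write $V(\cJ)=\{1,\dots,k\}$ and suppose we have already chosen vertices $x_1\in V_1,\dots,x_{i-1}\in V_{i-1}$ so that every triple among them that is an edge of $\cJ$ is mapped to an edge of $\cH$, and moreover the "link" sets are still large: for each $j<i$ and each pair $\{j,j'\}\in\cJ^2$ with $j'<i$ the set of vertices $y\in V_i$ that are joined to $x_j$ and $x_{j'}$ appropriately in the underlying cylinder $G$ is a positive-density subset of $V_i$. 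This is maintained because $G^{ij}$ is $\epsilon$-regular of density $1/l$, so restricting to the common neighborhood of a bounded number of already-chosen vertices (in the at most $k$ bipartite graphs meeting $V_i$) leaves a set of size at least $(1/l - \epsilon)^{k} m \gg \epsilon m$, hence still $\epsilon$-regular-usable in subsequent steps.

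The core of the argument is the selection of $x_i$. Let $W_i\subset V_i$ be the set of candidates that are correctly joined, in $G$, to all previously embedded $x_j$ with $\{i,j\}\in\cJ^2$; by the previous paragraph $|W_i|\ge c\, m$ for a constant $c=c(k,l)>0$. For each triple $B=\{i,j,j'\}\in\cJ$ with $j,j'<i$ we need $x_i y z\in\cH$ when $\{x_j,x_{j'}\}=\{y,z\}$; equivalently, we need $x_i$ to lie in the neighborhood (in $\cH(B)$) of the pair $x_jx_{j'}$ restricted to $W_i$. Here is where $(\alpha_B,\delta,r)$-regularity of $\cH(B)$ with respect to $G(B)$ is used: the pair $x_jx_{j'}$ has, by the already-maintained conditions, a positive-density set of extensions in $G(B)$ — namely the triangles of $G(B)$ through $x_jx_{j'}$ number $(1\pm\theta)m/l^3$-ish of the relevant form — and $(\alpha_B,\delta,r)$-regularity forces $\cH(B)$ to contain at least $(\alpha_B-\delta)$ proportion of these triangles. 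Since $\alpha_B\ge\alpha$ and $\delta\ll\alpha$, the set of good $x_i$ for this particular $B$ is a subset of $W_i$ of density at least $(\alpha-\delta)/2 \cdot (\text{density of }W_i)$, still linear in $m$. Taking the intersection over the at most $\binom{k-1}{2}<k^2$ triples $B\in\cJ$ containing $i$ and two earlier vertices, and over the analogous "future-link survival" conditions, we are intersecting a bounded number ($O(k^2)$) of subsets of $V_i$ each of density bounded below by a constant depending only on $k,l,\alpha$; provided $\delta,\epsilon$ were chosen small enough relative to these constants, the intersection is nonempty (in fact of size $\ge c' m$), and we pick any $x_i$ in it.

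The technical subtlety — and the step I expect to be the main obstacle — is verifying that the regularity hypothesis of $\cH(B)$ with respect to $G(B)$ can legitimately be applied to the \emph{restricted} configuration after several vertices have been pinned down, rather than to $G(B)$ as a whole. This is the standard difficulty in proving embedding/counting lemmas in the Frankl--R\"odl setting: one cannot simply "condition" on a few vertices, since $(\delta,r)$-regularity is a global statement about the distribution of $\cH$-edges over triangles of the triad. The usual resolution, which I would follow, is to appeal to the $r$ in $(\delta,r)$-regularity: because $r$ may be chosen large (as a function of $l$), regularity controls not just $G(B)$ but unions of up to $r$ sub-polyads, so one phrases the candidate sets $W_i$ themselves as (Boolean combinations of) polyads and invokes regularity on those. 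One then has to check a "dense polyad" condition — that the sub-cylinder we restrict to still has density bounded away from $0$, which is exactly what the $\epsilon$-regularity bookkeeping of the first paragraph guarantees — and a counting step ensuring the number of triangles surviving all restrictions is $\Theta(m)$, which is Lemma~\ref{tlemma} applied at each stage. Since $k$ is fixed, the number of stages and the number of sub-polyads invoked are bounded, so the choice of parameters $\delta\gg 1/l\gg\epsilon, 1/r$ can be made consistently; this is routine once the polyad formulation is in place, and it yields $\cJ\subset\cH$.
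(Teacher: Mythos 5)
Note first that the paper does not prove the Embedding Lemma: it states it as a black box and refers to Nagle--R\"odl \cite{NR} for a proof, so there is no argument in the paper against which to compare yours.

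On its own terms, your sketch has the right global shape --- a greedy vertex-by-vertex embedding while maintaining large candidate sets --- and you correctly identify the central obstacle: $(\delta,r)$-regularity is a statement about the distribution of $\cH$-edges over collections of sub-polyads of the triad, and it does not trivially survive conditioning on a few chosen vertices. But the sentence that discharges this obstacle, ``this is routine once the polyad formulation is in place,'' is exactly where all the work of the lemma lives, and it is not routine. Two concrete issues are glossed over. First, once $x_i$ is fixed, the edges of $\cJ$ through $i$ degenerate to a bipartite ``link graph'' between the remaining classes, while the edges of $\cJ$ avoiding $i$ must still be embedded in the restricted triads; a correct induction needs a strengthened statement tracking regularity of both the restricted triads and these induced link graphs simultaneously, with a hierarchy $\delta \gg 1/l \gg \epsilon, 1/r$ calibrated to $k$ so that regularity degrades only by a controlled amount at each of the $k$ steps. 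Second, to invoke $(\delta,r)$-regularity on the candidate set $W_i$ you must exhibit the triangles over $W_i$ as a union of at most $r$ sub-polyads whose total triangle measure exceeds $\delta$; constructing those sub-polyads from the $\epsilon$-regular bipartite neighborhoods and verifying the density threshold is the technical heart of the Frankl--R\"odl / Nagle--R\"odl argument, not a corollary of the Triangle Counting Lemma. So your proposal is a reasonable summary of the strategy, but it leaves a genuine gap precisely where the proof is hard.
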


For a proof of the Embedding Lemma, see \cite{NR}.

\section{Proof of Theorem \ref{stablet}}\label{sectionproofstab}

In this section we prove Theorem \ref{stablet}.
We will need the following stability result proved in \cite{FPS}.
The constants have been adjusted for later use.

\begin{theorem} {\bf (F\"uredi-Pikhurko-Simonovits \cite{FPS})} \label{fps}
For every $\nu''>0$, there exist $\nu_1', t_2$ such that every
$T_5$-free $3$-graph on $t>t_2$ vertices and at least
$(1-2\nu_1')\frac{2t^3}{27}$ edges has an ordered partition for
which the number of inconsistent edges is at most $\nu'' t^3$.
Additionally, there exists $t_3$ such that ex$(n, T_5)\le
\frac{2t^3}{27}$ for all $t \ge t_3$.
\end{theorem}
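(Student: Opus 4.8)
The plan is to prove Theorem~\ref{fps} by a stability argument, handling the structural statement and the bound $\ex(t,T_5)\le\frac{2t^3}{27}$ together (the argument is naturally an induction on $t$). Throughout I would use three equivalent forms of $T_5$-freeness for a $3$-graph $\HH$, writing $L_w=\{xy:wxy\in\HH\}$ for the link of $w$ and $N(uv)=\{w:uvw\in\HH\}$: (a) every neighbourhood $N(uv)$ is $\HH$-independent (the form noted in the paper); (b) for every edge $\{c,d,e\}\in\HH$ the graphs $L_c,L_d,L_e$ have no common edge, since a pair $ab\in L_c\cap L_d\cap L_e$ together with $cde\in\HH$ is exactly a copy of $T_5$; and (c) inside any link $L_v$ the neighbourhood of a vertex $x$ is the set $N(vx)$ and is hence $\HH$-independent. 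The template to keep in mind is $B^3(t)$ with sides $X$ (size $\approx\tfrac23 t$) and $Y$ (size $\approx\tfrac13 t$): the link of a vertex of $Y$ is a clique on $X$, and the link of a vertex of $X$ is complete bipartite between $X$ and $Y$.

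\emph{The density bound and a minimum-degree reduction.} The analytic core is $\ex(n,T_5)\le(\tfrac49+o(1))\binom n3$, where the $2{:}1$ split must appear. A natural starting point is the double count of copies of $T_5^-=\{123,124,125\}$: each is determined by its centre pair $uv$ together with a triple in $N(uv)$, so there are $\sum_{uv}\binom{d(uv)}{3}$ of them, and by form~(b) every such copy lies over a non-edge, giving $\sum_{uv}\binom{d(uv)}{3}\le(\binom n3-m)\max_{\{c,d,e\}\notin\HH}|L_c\cap L_d\cap L_e|$. The trivial bound $|L_c\cap L_d\cap L_e|\le\binom n2$ is far too weak, so structure must be fed in --- using form~(c), a high-degree vertex has links whose vertex-neighbourhoods are large $\HH$-independent sets, which caps how many heavy pairs can sit on a small vertex set --- and the sharp constant $\tfrac49$ emerges only once this is combined with the link analysis below; in practice the density bound and the stability statement are proved in tandem by induction on $n$. (One can instead route the reduction through the Frankl--R\"odl regularity lemma and the Embedding Lemma, which is legitimate since $|V(T_5)|=5\ge 4$ so a $T_5$ in the cluster $3$-graph embeds into $\HH$; but the extremal density of the cluster hypergraph is the same question, so nothing is gained for the constant.) Granting the density bound and the implied supersaturation, I then run the standard cleaning: given $T_5$-free $\HH$ on $t$ vertices with $\ge(1-2\nu_1')\tfrac{2t^3}{27}$ edges, delete vertices of degree below $(\tfrac49-\sqrt{\nu_1'})\binom{t-1}{2}$ one at a time; only $o(t)$ vertices leave, the remaining $T_5$-free $\HH'$ on $t'=(1-o(1))t$ vertices still has $\ge(1-o(1))\tfrac{2t'^3}{27}$ edges and now $\delta(\HH')\ge(\tfrac49-o(1))\binom{t'-1}{2}$, and any near-semi-bipartite partition of $\HH'$ extends to one of $\HH$ at a cost of $o(t^3)$ edges.

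\emph{The structure of the links (the crux).} Next I would show that the minimum-degree hypothesis plus $T_5$-freeness force every link $L_v$ of $\HH'$ to be, after deleting $o(t'^2)$ edges, complete multipartite of one of exactly two shapes: a clique on $\approx\tfrac23 t'$ vertices, or complete bipartite with sides $\approx\tfrac23 t'$ and $\approx\tfrac13 t'$ --- i.e.\ the two link shapes of $B^3(t')$. The engine is forms~(b)--(c): a local failure of complete-multipartiteness in a dense link $L_v$, namely vertices $x,y,z$ with $vxz\in\HH$ but $vxy,vyz\notin\HH$, forces --- via the $\HH$-independence of $N(vx)$ and $N(vz)$ and the heavy codegrees supplied by the min-degree bound --- a copy of $T_5$ somewhere, unless $L_v$ is already organised; coupling this across many links and double counting turns a bounded number of per-link exceptions into $o(t'^2)$ exceptions per link, uniformly. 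Declaring $Y=\{v:L_v\ \text{of clique type}\}$ and $X=\{v:L_v\ \text{of bipartite type}\}$ and checking consistency of which pairs occur in which links then forces all but $o(t'^3)$ edges of $\HH'$ to meet $Y$ in exactly one vertex, and the edge count pins $|X|=(\tfrac23\pm o(1))t'$ and $|Y|=(\tfrac13\pm o(1))t'$. Choosing $\nu_1'$ small and $t_2$ large in terms of $\nu''$ and tracing the $o(\cdot)$ bounds through the earlier steps produces a partition of $[t]$ with at most $\nu'' t^3$ inconsistent edges.

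\emph{The extremal addendum, and the main obstacle.} The bound $\ex(t,T_5)\le\tfrac{2t^3}{27}$ for $t\ge t_3$ falls out of bootstrapping the above to exactness: take $\HH$ with the maximum number of edges, apply the cleaning and link-structure steps to obtain a near-semi-bipartite partition $(X,Y)$, and then use a local-switching argument --- in the now-rigid structure, any inconsistent edge may be removed and any missing semi-bipartite edge added so as to strictly raise the edge count without creating a $T_5$ --- to conclude that $\HH$ is exactly semi-bipartite, hence $\ex(t,T_5)=|B^3(t)|=\max_a\binom a2(t-a)$, which is below $\tfrac{2t^3}{27}$ by a $\Theta(t^2)$ margin. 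I expect the link-structure step, together with pinning down the constant $\tfrac49$, to be the main obstacle: a lone non-edge inside a link need not create any $T_5$, so forcing the complete-multipartite shape with genuinely $o(t'^2)$ error \emph{per link} rather than only on average requires the minimum-degree input and an extra round of cleaning, and this --- not any single inequality --- is where the real work sits.
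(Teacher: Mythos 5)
You should first note that the paper contains no proof of Theorem~\ref{fps} to compare against: it is imported verbatim from \cite{FPS} (with constants renamed) and used purely as a black box in Section~5, so the only meaningful benchmark is the F\"uredi--Pikhurko--Simonovits argument itself, whose broad shape (degree cleaning, analysis of links, stability, then exactness via local modifications) your outline does echo.

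That said, what you have written is a plan rather than a proof, and the two places where it is vaguest are exactly where all the difficulty lives. First, the density bound $\ex(t,T_5)\le(\tfrac49+o(1))\binom t3$ is never established: the double count $\sum_{uv}\binom{d(uv)}{3}\le\sum_{\{c,d,e\}\notin\HH}\left|L_c\cap L_d\cap L_e\right|$ is correct but, as you concede, useless with the trivial codegree bound, and the sentence ``the sharp constant emerges once this is combined with the link analysis'' simply defers the problem; determining the Tur\'an density $4/9$ for $T_5$ was itself a separate, nontrivial achievement of F\"uredi--Pikhurko--Simonovits, and nothing in your sketch substitutes for it. Second, the link-structure dichotomy --- that every link of the cleaned hypergraph is, up to $o(t'^2)$ edges, either a clique on $\approx\tfrac23t'$ vertices or complete bipartite with parts $\approx\tfrac23t'$ and $\approx\tfrac13t'$ --- is asserted, not argued: a single configuration $vxz\in\HH$, $vxy,vyz\notin\HH$ creates no copy of $T_5$ by itself, and you yourself flag that forcing the multipartite shape with per-link (not merely average) error control ``is where the real work sits,'' without doing that work. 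The exactness step needed for the addendum $\ex(t,T_5)\le\tfrac{2t^3}{27}$ (removing inconsistent edges and adding consistent ones without creating $T_5$, while strictly increasing the edge count) is likewise only named. So the proposal identifies a reasonable strategy, consistent in spirit with \cite{FPS}, but the theorem is not proved: its two pivotal lemmas are conjectured rather than demonstrated, and both conclusions of the statement depend on them.
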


Given $\eta>0$, our constants will obey the following hierarchy:
$$\eta\gg \nu''\gg \nu' \gg \nu \gg \sigma, \theta
\gg  \alpha_0, \frac{1}{t_0} \gg \delta \gg \gamma >\frac{1}{l_0} \gg\frac{1}{r}, \epsilon \gg \frac{1}{n_0}.$$
Before proceeding with further details regarding our constants,
we define the {\it binary entropy function} $H(x):=
-x\log_2 x- (1-x)\log_2 (1-x).$  We use the following two  facts about $H(x)$ that apply for $n$ sufficiently large:

$\bullet$ for $0<x< 0.5$ we
have $$\binom{n}{\lfloor xn\rfloor}<2^{H(x)n}.$$

$\bullet$  if $x$ is sufficiently small
then
\begin{equation} \label{x} \sum_{i=0}^{\lfloor xn\rfloor} \binom{n}{i}<2^{H(x)n}.\end{equation}

{\bf Detailed definition of constants.}

Set
\begin{equation} \label{nu''def}\nu''=\left(\frac{\eta}{30}\right)^3\end{equation}
 and suppose that $\nu'_1$ and $t_2$ are the outputs of Theorem \ref{fps} with input $\nu''$.  Put
\begin{equation} \label{nu'}
\nu'=\min\left\{\nu'_1, \frac{\nu''}{2}, \frac{\eta}{7}\right\} \quad \hbox{ and } \quad \nu=(\nu')^4.\end{equation}
We choose
  \begin{equation} \label{theta}
  \theta=\frac{\nu}{4(1-\nu)}.\end{equation}

Choose $\sigma_1$ small enough  so that
\begin{equation} \label{sigma}
\left(1-\frac{\nu}{2}\right)\frac{2n^3}{27}+o(n^3)+H(\sigma_1)n^3\le
\left(1-\frac{\nu}{3}\right)\frac{2n^3}{27}\end{equation} holds for
sufficiently large $n$.  In fact the function denoted by $o(n^3)$
will actually be seen to be of order $O(n^2)$ so (\ref{sigma}) will
hold for sufficiently large $n$. Choose $\sigma_2$ small enough so
that (\ref{x}) holds for $x=\sigma_2$. Let
$$\sigma=\min\left\{\sigma_1, \sigma_2, \frac{\eta}{2}\right\}.$$
Next we consider the Triangle Counting Lemma (Lemma \ref{tlemma}) which provides an
$\epsilon$ for each  $\theta$ and $l$. Since $\theta$ is fixed, we may let
$\epsilon_1=\epsilon_1(l)$ be the output of Lemma \ref{tlemma} for each integer $l$.

For $\sigma$ defined above, set
\begin{equation} \label{alpha}\delta_1=\alpha_0=\frac{\sigma}{100} \quad
\hbox{ and } \quad t_1=\left\lceil \frac{1}{\delta_1} \right\rceil.\end{equation}
Let
$$t_0=\max\{t_1, t_2, t_3\}.$$
Now consider the Embedding Lemma (Lemma \ref{elemma}) with inputs $k=5$ and $\alpha_0$
defined above.  The Embedding Lemma gives $\delta_2=\delta_2(\alpha_0)$, and
we set
\begin{equation} \label{delta} \delta=\min\{\delta_1, \delta_2\}, \quad
\quad \gamma=\delta^4, \quad \quad l_0=\frac{2}{\delta}.\end{equation}
For each integer $l>\frac{1}{\delta}$, let $r=r(l)$ and
$\epsilon_2=\epsilon_2(l)$ be the outputs of Lemma~\ref{elemma}. Set
\begin{equation} \label{epsilonl}\epsilon=\epsilon(l)=\min\{\epsilon_1(l), \epsilon_2(l)\}.\end{equation}

With these constants, the Regularity Lemma (Theorem \ref{rl}) outputs $N_0$.  We choose
$n_0$ such that $n_0>N_0$  and every $n>n_0$ satisfies
(\ref{x}) and (\ref{sigma}).

\medskip

{\bf Proof of the Theorem \ref{stablet}.}

We will prove that $$|Forb(n, T_5) - Forb(n, T_5, \eta)|<2^{(1-\frac{
\nu}{3})\frac{2n^3}{27}}.$$  This is of course equivalent to Theorem
\ref{stablet}.

For each $\HH \in Forb(n, T_5) - Forb(n, T_5, \eta)$, we use the Hypergraph
Regularity Lemma, Theorem~\ref{rl}, to obtain a $(\delta,
r)$-regular $(l, t, \gamma, \epsilon)$-partition $\cP=\cP_{\HH}$.
The input constants for Theorem~\ref{rl} are as defined above, and then Theorem \ref{rl} guarantees constants $T_0,
L_0, N_0$ so that every $3$-graph $\HH$ on $n>N_0$ vertices admits a
$(\delta, r)$-regular $(l, t, \gamma, \epsilon)$-partition $\cP$
where $t_0 \le t \le T_0$ and $l_0\le l \le L_0$. To this partition $\cP$,
associate a {\em density vector} $s=(s_{\{i,j,k\}_{a,b,c}})$ where $1
\le i<j<k\le t$ and $1\le a,b,c \le l$ and
$$d_{\cH}(P_a^{ij} \cup P_b^{jk} \cup P_c^{ik})\in [s_{\{i,j,k\}_{a,b,c}}\delta, (s_{\{i,j,k\}_{a,b,c}}+1)\delta].$$
For each $\HH \in
 Forb(n, T_5) - Forb(n, T_5, \eta)$, choose one $(\delta, r)$-regular $(l, t,
\gamma, \epsilon)$-partition $\cP_{\HH}$ guaranteed by Theorem~\ref{rl}, and let $\cP=\{\cP_1, \ldots, \cP_p\}$ be the set of all
such partitions over the family $Forb(n, T_5) -Forb(n, T_5, \eta)$. Define an
equivalence relation on $Forb(n, T_5) -Forb(n, T_5, \eta)$ by letting $\HH\sim
\HH'$ iff

1) $\cP_{\HH}=\cP_{\HH'}$ and

2) $\HH$ and $\HH'$ have the same density vector.

The number of equivalence classes $q$ is the number of partitions
times the number of  density vectors.  Consequently,
$$q\le \left({T_0 +1\choose 2}(L_0+1)\right)^{n \choose 2}
\left(\frac{1}{\delta}\right)^{{T_0+1 \choose 3}(L_0+1)^3}<2^{O(n^2)}.$$

 We will show that each equivalence class $C(\cP_i)$ satisfies
\begin{equation} \label{C} |C(\cP_i)|=2^{(1-\frac{\nu}{2})\frac{2n^3}{27}+H(\sigma) n^3}.\end{equation}
Combined with the upper bound for $q$ and (\ref{sigma}), we
obtain
$$|Forb(n, T_5) -Forb(n, T_5, \eta)|\le 2^{O(n^2)}2^{(1-\frac{\nu}{2})\frac{2n^3}{27}+H(\sigma)n^3}\le
2^{(1-\frac{\nu}{3})\frac{2n^3}{27}}.$$

For the rest of the proof, we fix an equivalence class $C=C(\cP)$
and we will show the upper bound in (\ref{C}).  We may assume that
$\cP$ has vertex partition $[n]=V_0\cup V_1\cup \cdots \cup V_t$,
$|V_i|=m=\lfloor \frac{n}{t}\rfloor$ for all $i\ge 1$, and system of
bipartite graphs $P_{a}^{ij}$, where $1\le i<j\le t, 0\le a\le
l_{ij} \le l$.

 Fix $\HH \in C$.  Let $\cE_0\subset \HH$ be the set of triples that either

 (i) intersect $V_0$, or

 (ii) have at least two points in some
$V_i, i\ge 1$, or

 (iii) contain a pair in $P_0^{ij}$ for some $i<j$, or

 (iv) contain a pair in some $P_{a}^{ij}$ that is not $\epsilon$-regular with density $\frac1l$.

 Then
 $$|\cE_0|\le tn^2+t\left(\frac{n}{t}\right)^2 n +\gamma{t \choose 2}n+2\gamma{t \choose 2}\left(\frac{n}{t}\right)^2 n.$$

  Let $\cE_1 \subset \HH-\cE_0$ be the set of triples $\{v_i, v_j, v_k\}$ such that either

  (i) the three bipartite graphs of $\cP$  associated with the pairs within the triple form
  a triad $P$ that is not $(\delta, r)$-regular with respect to $\HH(\{i,j,k\})$, or

  (ii) the density $d_{\HH}(P)<\alpha_0$.

 Then
 $$|\cE_1|\le 2\delta t^3\left(\frac{n}{t}\right)^3(1+\theta) +\alpha_0
 {t \choose 3}l^3\left(\frac{n}{t}\right)^3 \frac{1}{l^3}.$$

  Let $\cE_{\HH}=\cE_0\cup \cE_1$.
  Now (\ref{alpha}) and (\ref{delta}) imply that
\begin{equation}\label{rhoupp}|\cE_{\HH}|\le \sigma n^3.\end{equation} Set
$\HH'=\HH-\cE_{\HH}$.

Next we define $\cJ^C=\cJ^C(\HH)\subset \binom{[t]}{3} \times [l]
\times [l] \times [l]$ as follows: For $1 \le i <j <k \le t, 1 \le
a,b,c \le l$,  we have $\{i,j,k\}_{a,b,c} \in \cJ^C$ if and only if

(i) $P=P_a^{ij} \cup P_b^{jk} \cup P_c^{ik}$ is an $(l, \epsilon,
3)$-cylinder, and

(ii) $\HH'(\{i,j,k\})$ is $(\overline{\alpha}, \delta, r)$-regular
with respect to $P$, where $\overline{\alpha}\ge \alpha_0$.

We view $\cJ^C$ as a multiset of triples on $[t]$. For each
$\phi:\binom{[t]}{2} \rightarrow [l]$, let $\cJ_{\phi}\subset \cJ^C$ be the
$3$-graph on $[t]$ corresponding to the function $\phi$ (without
parallel edges). In other words, $\{i,j,k\} \in \cJ_{\phi}$ iff the
triples of $\HH$ that lie on top of the triangles of $P_{a}^{ij}
\cup P_{b}^{jk} \cup P_{c}^{ik}$, $a=\phi(ij), b=\phi(jk),
c=\phi(ik)$, are $(\overline{\alpha}, \delta, r)$-regular and the
underlying bipartite graphs $P_{a}^{ij}, P_{a}^{jk}, P_{c}^{ik}$ are
all $\epsilon$-regular with density $1/l$.

By our choice of constants in (\ref{delta}) and (\ref{epsilonl}),
 we see that $\cJ_{\phi}$ is a cluster $3$-graph for $\HH$, and
  hence by the Embedding Lemma $\cJ_{\phi} \subset \HH$. Since $T_5 \not\subset \HH$, we conclude that $T_5 \not\subset \cJ_{\phi}$.
   As it was shown in \cite{FPS} that for $t \ge t_3$, we have ex$(t, T_5)\le \frac{2t^3}{27}$,
     we conclude that
     \begin{equation}\label{jphi}
    |\cJ_{\phi}|\le \frac{2t^3}{27}\end{equation}
for each $\phi :{[t]\choose 2} \rightarrow [l]$.  Recall from (\ref{nu'}) that $\nu'=\nu^{1/4}$.

\begin{lemma}\label{fact} Suppose that $|\cJ^C|>(1-\nu){2l^3t^3}/{27}$.  Then for at least $(1-\nu') l^{{t \choose 2}}$
functions $\phi:\binom{[t]}{2} \rightarrow [l]$ we have
$$|\cJ_{\phi}| \ge (1-\nu')\frac{|\cJ^C|}{l^3}.$$
\end{lemma}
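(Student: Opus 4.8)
The plan is to count pairs $(\phi, B)$ where $\phi:\binom{[t]}{2}\to[l]$ and $B\in\cJ_\phi$, using the hypothesis $|\cJ^C|>(1-\nu)\frac{2l^3t^3}{27}$ to bound this count from below, and then use the fact that $|\cJ_\phi|\le \frac{2t^3}{27}$ for \emph{every} $\phi$ (which follows from \req{jphi}) to force most $\phi$ to be close to the maximum. First I would observe that a triple $B=\{i,j,k\}_{a,b,c}\in\cJ^C$ lies in $\cJ_\phi$ exactly when $\phi(ij)=a$, $\phi(jk)=b$, $\phi(ik)=c$; since $\phi$ is free on the remaining $\binom{t}{2}-3$ pairs, each such $B$ contributes to exactly $l^{\binom{t}{2}-3}$ functions $\phi$. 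Therefore
\begin{equation}\label{eq:count1}
\sum_{\phi}|\cJ_\phi| \;=\; |\cJ^C|\cdot l^{\binom{t}{2}-3} \;>\; (1-\nu)\frac{2t^3}{27}\cdot l^{\binom{t}{2}}.
\end{equation}

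Next I would let $N$ be the number of ``good'' functions $\phi$ with $|\cJ_\phi|\ge (1-\nu')\frac{|\cJ^C|}{l^3}$, and let the remaining $l^{\binom{t}{2}}-N$ functions be ``bad.'' For bad $\phi$ we only know $|\cJ_\phi|< (1-\nu')\frac{|\cJ^C|}{l^3}$, while for all $\phi$ (good or bad) we have the universal bound $|\cJ_\phi|\le \frac{2t^3}{27}$ from \req{jphi}. Splitting the sum in \req{eq:count1} according to this dichotomy gives
\begin{equation}\label{eq:count2}
(1-\nu)\frac{2t^3}{27}\, l^{\binom{t}{2}} \;<\; N\cdot \frac{2t^3}{27} \;+\; \bigl(l^{\binom{t}{2}}-N\bigr)\cdot (1-\nu')\frac{|\cJ^C|}{l^3}.
\end{equation}
Since $|\cJ^C|\le \frac{2l^3t^3}{27}$ trivially (it is a multiset of triples on $[t]$ indexed by the $l^3$ colour choices, so at most $\binom{t}{3}l^3\le \frac{l^3t^3}{6}$; in fact the weaker $\le \frac{2l^3t^3}{27}$ suffices here, or one can just use the hypothesis plus $|\cJ^C|\le l^3\binom{t}{3}$), we have $(1-\nu')\frac{|\cJ^C|}{l^3}\le (1-\nu')\frac{2t^3}{27}$. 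Substituting and dividing through by $\frac{2t^3}{27}l^{\binom{t}{2}}$, I get
\begin{equation}\label{eq:count3}
1-\nu \;<\; \frac{N}{l^{\binom{t}{2}}} \;+\; \Bigl(1-\frac{N}{l^{\binom{t}{2}}}\Bigr)(1-\nu'),
\end{equation}
which rearranges to $(1-\nu')\,\bigl(1 - N/l^{\binom{t}{2}}\bigr) < \nu' - \nu + \nu'\cdot N/l^{\binom t2}$... more cleanly, writing $x=N/l^{\binom{t}{2}}$, inequality \req{eq:count3} says $1-\nu < 1-\nu'(1-x)$, i.e. $\nu'(1-x)<\nu$, i.e. $1-x < \nu/\nu' = \nu^{1/4}\cdot\nu^{-1}\cdot\nu = \nu'^{3}/\nu'^{4}\cdot\ldots$; since $\nu=(\nu')^4$ by \req{nu'}, $\nu/\nu' = (\nu')^3 \le \nu'$, so $1-x<\nu'$ and hence $N>(1-\nu')l^{\binom{t}{2}}$, as required.

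The only genuinely delicate point is making sure the universal bound $|\cJ_\phi|\le\frac{2t^3}{27}$ used for the bad functions is legitimate: this is exactly \req{jphi}, which was derived from the Embedding Lemma together with $\ex(t,T_5)\le\frac{2t^3}{27}$ for $t\ge t_3$, and our constant hierarchy guarantees $t\ge t_0\ge t_3$. One should also double-check the bookkeeping that each $B\in\cJ^C$ is counted in precisely $l^{\binom{t}{2}-3}$ functions $\phi$ — this requires that the three pairs $ij,jk,ik$ underlying $B$ are distinct, which holds since $i<j<k$. Everything else is the elementary averaging above, and the numerical step $\nu/\nu'=(\nu')^3<1$ is immediate from \req{nu'}. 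I expect no real obstacle beyond being careful that the slack we lose (namely $\nu'$) is controlled by the slack we are given (namely $\nu=(\nu')^4$), which is precisely why the fourth power was chosen in the definition of $\nu$.
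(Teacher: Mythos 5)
Your approach is the same double-counting argument that the paper uses: you count incidences $\sum_\phi |\cJ_\phi| = |\cJ^C|\, l^{\binom{t}{2}-3}$, split according to which $\phi$ clear the threshold, and invoke $|\cJ_\phi|\le\frac{2t^3}{27}$ from \req{jphi}; this is the paper's bipartite incidence graph in disguise, and the final numerical step using $\nu=(\nu')^4$ is identical.

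There is, however, a genuine gap where you bound the contribution of the bad $\phi$'s. You assert $|\cJ^C|\le\frac{2l^3t^3}{27}$ ``trivially,'' reasoning that $\cJ^C$ is a multiset of at most $\binom{t}{3}l^3\le\frac{l^3t^3}{6}$ elements, and then describe $\frac{2l^3t^3}{27}$ as the ``weaker'' bound. That is backwards: since $\frac{2}{27}<\frac{1}{6}$, the bound $\frac{2l^3t^3}{27}$ is \emph{stronger}, and it does \emph{not} follow from the crude multiset count. And the argument genuinely needs the stronger bound: if one substitutes $\frac{l^3t^3}{6}$ in its place, your inequality becomes $1-\nu < x + (1-x)(1-\nu')\cdot\frac{27}{12}$, whose right-hand side is at least $1$ for every $x\in[0,1]$, so the step yields no information. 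Your fallback, ``use the hypothesis plus $|\cJ^C|\le l^3\binom{t}{3}$,'' does not rescue this, since the hypothesis only bounds $|\cJ^C|$ from below.

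The bound $|\cJ^C|\le\frac{2l^3t^3}{27}$ is in fact correct, but the honest justification is \req{jphi} together with your own identity: summing $|\cJ_\phi|\le\frac{2t^3}{27}$ over all $\phi$ gives $|\cJ^C|\,l^{\binom{t}{2}-3}=\sum_\phi|\cJ_\phi|\le l^{\binom{t}{2}}\frac{2t^3}{27}$, whence $|\cJ^C|\le\frac{2l^3t^3}{27}$. With that one-line repair your computation closes. Alternatively (and this is what the paper actually does) you can sidestep any a priori upper bound on $|\cJ^C|$ by keeping it symbolic: divide $|\cJ^C|\, l^{\binom{t}{2}-3}\le N\frac{2t^3}{27}+(l^{\binom{t}{2}}-N)(1-\nu')\frac{|\cJ^C|}{l^3}$ by $l^{\binom{t}{2}-3}$, collect the $|\cJ^C|$ terms, and only then apply the hypothesis $|\cJ^C|>(1-\nu)\frac{2l^3t^3}{27}$; one lands on the same numerical inequality $(1-\nu'(1-\nu'))(1-\nu)<1-\nu'$ that the fourth-power choice of $\nu$ was designed to violate.
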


\begin{proof}  Form the following bipartite graph: the vertex partition is
$\Phi \cup \cJ^C$ , where
$$\Phi=\left\{\phi: {[t]\choose 2} \rightarrow [l]\right\}$$
and the edges are of the form $\{\phi, \{i,j,k\}_{abc}\}$ if and only if $\phi \in \Phi$,
 $\{i,j,k\}_{abc}\in \cJ^C$ where $\phi(\{i,j\})=a,\  \phi(\{j,k\})=b,\  \phi(\{i,k\})=c$.
 Let $E$ denote the number of edges in this bipartite graph. Since each $\{i,j,k\}_{abc} \in \cJ^C$ has degree precisely
$l^{{t \choose 2}-3}$, we have \begin{equation}\label{uppE}
E=|\cJ^C|
l^{{t \choose 2}-3}.\end{equation} Note that the degree of $\phi$ is
$|\cJ_{\phi}|$.
 Suppose for contradiction that the number of $\phi$ for which
 $|\cJ_{\phi}| \ge (1-\nu')\frac{|\cJ^C|}{l^3}$ is less than $(1-\nu') l^{{t \choose 2}}$.
  By (\ref{jphi}), we have $|\cJ_{\xi}|\le \frac{t^3}{27}$ for each $\xi\in \Phi$ and hence
\begin{equation}\label{upptwoE}E\le (1-\nu') l^{{t \choose 2}}\frac{t^3}{27} + \nu'l^{{t \choose
2}}(1-\nu')\frac{|\cJ^C|}{l^3}.\end{equation} Using \eqref{uppE}
 and dividing by $l^{{t \choose 2}-3}$  yields
$$|\cJ^C| \le (1-\nu')l^3\frac{t^3}{27} + \nu'(1-\nu')|\cJ^C|.$$
After simplifying we obtain
$$(1-\nu'(1-\nu'))|\cJ^C|\le (1-\nu')l^3\frac{t^3}{27}.$$
  The lower bound $|\cJ^C|>(1-\nu)\frac{l^3t^3}{27}$ then gives
$$(1-\nu'(1-\nu'))(1-\nu)< 1-\nu'.$$
Since $\nu'=\nu^{1/4}$, the left hand side expands to
$$1-\nu'+\nu^{1/2}-\nu+\nu^{5/4}-\nu^{3/2}>1-\nu'.$$
This contradiction completes the proof.\end{proof}

{\bf Claim 1.}
$$|\cJ^C|\le (1-\nu)\frac{2l^3 t^3}{27}.$$
Once we have proved Claim 1, the proof is complete by the following
argument which is very similar to that in \cite{NR} and in \cite{t5}. Define
$$S^C=\bigcup_{ \{i,j,k\}_{abc} \in \cJ^C} \cK_3(P_a^{ij} \cup P_b^{jk} \cup P_c^{ik}).$$
The Triangle Counting Lemma implies that $|\cK_3(P_a^{ij} \cup
P_b^{jk} \cup P_c^{ik})| <\frac{m^3}{l^3}(1+\theta)$.  Now  Claim 1
and $(\ref{theta})$  give
$$|S^C| \le \frac{m^3}{l^3}(1+\theta)|\cJ^C|\le m^3(1+\theta)(1-\nu)\frac{2t^3}{27}<m^3\frac{2t^3}{27}
\left(1-\frac{\nu}{2}\right)\le
\frac{2n^3}{27}\left(1-\frac{\nu}{2}\right).$$
Since $\HH' \subset S^C$
for every $\HH \in C$,
$$|\{\HH': \HH \in C\}|\le 2^{(1-\frac{\nu}{2})\frac{2n^3}{27}}.$$

Each $\HH \in C$ can be written as $\HH=\HH' \cup \cE_{\HH}$. In view of (\ref{x}) and
$|\cE_{\HH}|\le \sigma n^3$, the number of $\cE_{\HH}$ with $\HH \in
C$ is at most $\sum_{i\le \sigma n^3} {n^3 \choose i}\le
2^{H(\sigma)n^3}$.
 Consequently,
$$|C| \le 2^{(1-\frac{\nu}{2})\frac{2n^3}{27}+H(\sigma)n^3}
$$
so (\ref{C}) holds and we are done.

{\bf Proof of Claim 1.} Suppose to the contrary that $|\cJ^C|>
(1-\nu)\frac{2l^3 t^3}{27}$.   We  apply  Lemma~\ref{fact}  and
conclude that for most functions $\phi$ the corresponding triple
system $\cJ_{\phi}$ satisfies
$$|\cJ_{\phi}| \ge (1-\nu')\frac{|\cJ^C|}{l^3} > (1-\nu')(1-\nu)\frac{2t^3}{27}>(1-2\nu')\frac{2t^3}{27}.$$
By  Theorem \ref{fps}, we conclude that for all of these $\phi$, the
triple system $\cJ_{\phi}$ has an ordered partition where the number
of inconsistent edges is at most  $\nu'' t^3$.  Let $\cG$ be the set of consistent edges of $\cJ_{\phi}$ and let $\cB$ be the set of inconsistent edges of $J_{\phi}$. Write $\cM$ for the set of consistent triples that are not edges of $\cJ_{\phi}$.  Then $\cG \cup \cM$ is semi-bipartite, so
$$|\cG|+|\cM| \le \max_{1 \le a \le t} {a \choose 2}(t-a)\le \frac{2t^3}{27}.$$
We also have $|\cG|+|\cB| =|\cJ_{\phi}|\ge (1-2\nu')\frac{2t^3}{27}$ and $|\cB|\le \nu'' t^3$.  Consequently,
\begin{equation} \label{nu''} |\cM| \le \left(\frac{4\nu'}{27}+\nu''\right)t^3<2\nu''t^3. \end{equation}

Fix one such $\phi$ and let the optimal partition of $\cJ_{\phi}$ be
$P_{\phi}=(X,Y)$. Since
$|\cJ_{\phi}| \ge (1-2\nu') \frac{2t^3}{27}$ and
$|D_{P_{\phi}}|\le \nu''t^3$, we obtain
$$|X|= (1 \pm \sqrt{\nu''}) \frac{2t}{3} \quad  \hbox{\rm and} \quad
|Y|= (1 \pm 2\sqrt{\nu''}) \frac{t}{3}.$$ Indeed, otherwise a short
calculation using (\ref{nu'}) gives
$$|\cJ_{\phi}|\le {|X| \choose 2}|Y| +|D_{P_{\phi}}|\le {(1-\sqrt{\nu''})\frac{2t}{3} \choose 2}(1+2\sqrt{\nu''})\frac{t}{3}+\nu'' t^3<(1-2\nu')\frac{2t^3}{27}.$$
\qed

Let $P=(V_X,V_Y)$ be the corresponding vertex partition of $[n]$,
obtained from the proof of Claim 1. In other words,
$$V_X=\bigcup_{i \in X} V_i \quad \hbox{ and } \quad
V_Y=\bigcup_{i \in Y} V_i.$$
  We will show that $P$ is a partition
of $[n]$ where the number of inconsistent edges $|D_P|$ is fewer
than $\eta n^3$.  This contradicts the fact that $\HH \in Forb(n, T_5)-Forb(n,
T_5, \eta)$ and completes the proof of Theorem \ref{stablet}.

From \eqref{rhoupp} $|\cE_{\HH}|\le \sigma n^3 \le
\frac{\eta}{2}n^3$ so it suffices to prove that $|D_P -\cE_{\HH}|\le
\frac{\eta}{2}n^3$.

Call a $\xi: \binom{[t]}{2} \rightarrow [l]$ {\it good} if it satisfies the
conclusion of Lemma~\ref{fact} (i.e. $|\cJ_{\xi}|>(1-\nu')\frac{|\cJ^C|}{l^3}$), otherwise call it {\it bad}. For each $\xi$ and
edge $\{i,j,k\} \in \cJ_{\xi}$, we have $a,b,c$ defined by
$a=\xi(\{i,j\})$ etc. let $\HH_{\xi}$ be the union, over all
$\{i,j,k\} \in \cJ_{\xi}$, of the edges of $\HH$ that lie on top of
the triangles in $P_{a}^{ij} \cup P_{b}^{jk} \cup P_{c}^{ik}$.  Let
$D_{\xi}$ be the set of edges in $\HH_{\xi}$ that are inconsistent
with respect to $P=(V_X, V_Y)$. We will estimate $|D_P-\cE_{\HH}|$
by summing  $|D_{\xi}|$ over all $\xi$.  Please note that each $e\in
D_P-\cE_{\HH}$  lies in exactly  $l^{{t \choose 2}-3}$ different
$D_{\xi}$ due to the definition of $\cJ^C$. Summing over all $\xi$
gives
$$l^{{t \choose 2}-3} |D_P-\cE_{\HH}|
=\sum_{\xi:\binom{[t]}{2} \rightarrow [l]}|D_{\xi}|=\sum_{\xi\  good} |D_{\xi}|
+\sum_{\xi\  bad} |D_{\xi}|.$$ Note that for a given edge $\{i,j,k\}
\in \cJ_{\phi}$ the number of edges in $\HH_{\phi}$ corresponding to
this edge is the number of edges in $V_i \cup V_j \cup V_k$ on top
of triangles formed by the three bipartite graphs, each of which is
$\epsilon$-regular of density $1/l$.  By the Triangle Counting
Lemma, the total number of such triangles is at most
$$2|V_i||V_j||V_k|\left(\frac1l\right)^3<2\left(\frac{n}{t}\right)^3 \left(\frac1l\right)^3.$$
 By Lemma~\ref{fact} the number of bad $\xi$ is at most $\nu' l^{{t\choose 2}}$.  So we have
$$\sum_{\xi\  bad} |D_{\xi}|\le \nu' l^{{t\choose 2}}
{t \choose 3}2\left(\frac{n}{t}\right)^3
\left(\frac1l\right)^3<\nu'l^{{t\choose 2}-3}n^3.$$ It remains to
estimate $\sum_{\xi\  good} |D_{\xi}|$.

 Fix a good $\xi$ and let the optimal partition of $\cJ_{\xi}$ be $P_{\xi}=A \cup B$
 (recall that  $|D_{P_{\xi}}|\le \nu''t^3$, $A=(1 \pm \sqrt{\nu''})\frac{2t}{3}$, $B=(1 \pm 2\sqrt{\nu''})\frac{t}{3}$).

{\bf Claim 2.} The number of consistent edges of $\cJ_{\xi}$ with
$P_{\xi}$ that are inconsistent edges of $\cJ_{\phi}$ with
$P_{\phi}$ is at most $4(\nu'')^{1/3}t^3$.

Suppose that  Claim 2 is true. Then
$$\sum_{\xi\ good} |D_{\xi}| \le l^{{t\choose 2}}\left[4(\nu'')^{1/3}t^3(\frac{n}{t})^3\frac{2}{l^3}+
\nu''t^3(\frac{n}{t})^3\frac{2}{l^3}\right]= l^{{t\choose
2}-3}\left[10(\nu'')^{1/3}n^3\right].$$ Explanation: We consider the
contribution from the inconsistent edges of $P_{\phi}$ that are (i)
consistent edges of $P_{\xi}$ and (ii) inconsistent edges of
$P_{\xi}$. We do not need to consider the contribution from the
consistent edges of $P_{\phi}$ since by definition, these do not
give rise to edges of $D_P$.

Altogether, using (\ref{nu''def}) and (\ref{nu'}) we  obtain
$$|D_P-\cE_{\HH}| \le (10(\nu'')^{1/3}+\nu')n^3<\frac{\eta}{2} n^3$$
and the proof is complete.  We now  prove Claim 2.

{\bf Proof of Claim 2.} First we argue that for every $A' \subset A,
B' \subset B$ with $\min\{|A'|, |B'|\}\ge 3(\nu'')^{1/3}t$, the
number of edges in $\cJ_{\xi}$ with two points in $A'$ and one point
in $B'$ is at least $10\nu''t^3$.  Indeed, ${|A'|\choose
2}|B'|>12\nu''t^3$, and the number of triples with two points in $A'$
and one point in $B'$ that are not edges of $J_{\xi}$ is at most $2\nu''t^3$
by (\ref{nu''}). The remaining triples are edges in $\cJ_{\xi}$ with
two points in $A'$ and one point in $B'$  as desired.

Now suppose that $A'=A \cap Y$ and $B'=B \cap Y$ satisfy
$\min\{|A'|, |B'|\}\ge 3(\nu'')^{1/3}t$.  Then we have at least
$10\nu''t^3$ edges $e \in \cJ_{\xi}$ with $|e \cap A'|=2$ and $|e
\cap B'|=1$.  For each such edge $e=\{k,k',k''\}\subset Y$, and each
$\{i,j\} \in {X\choose 2}$, consider three distinct triples
$f=\{i,j,k\},f'=\{i,j,k'\}, f''=\{i,j,k''\}$ that are consistent
with $P_{\phi}$. If $f, f', f'' \in J_{\phi}$ then consider the
following ten bipartite graphs:

$$G^{ij}=P_{\phi(\{i,j\})}^{ij}, \quad G^{jk}=P_{\phi(\{j,k\})}^{jk}, \quad G^{ik}=P_{\phi(\{i,k\})}^{ik},$$
$$G^{ik'}=P_{\phi(\{i,k'\})}^{ik'},\quad
G^{jk'}=P_{\phi(\{j,k'\})}^{jk'},\quad
 G^{ik''}=P_{\phi(\{i,k''\})}^{ik''},\quad
G^{jk''}=P_{\phi(\{j,k''\})}^{jk''},$$
 $$G^{kk'}=P_{\xi(\{k, k'\})}^{kk'},\quad G^{k'k''}=P_{\xi(\{k', k''\})}^{k'k''},\quad G^{kk''}=P_{\xi(\{k, k''\})}^{kk''}.$$
Set $G=\bigcup G^{uv}$ where the union is over the ten bipartite
graphs defined above.
 Since $\{e,f,f', f''\} \subset \cJ_{\phi} \cup \cJ_{\xi}$,  the $3$-graph  $J=\{e, f,f', f''\}$ associated with the
 $5$-partite graph $G$ and $3$-graph $\HH(\{i,j,k,k',k''\})$ is a cluster $3$-graph.
 By our choice of constants in (\ref{delta}), we may apply the Embedding Lemma. As $J \cong T_5$,
   we obtain the contradiction  $T_5 \subset
\HH$. We conclude that $g\not\in \cJ_{\phi}$ for some $g\in \{f, f',
f''\}$.  Each $e$ gives rise to at least ${|X| \choose
2}>\frac{t^2}{5}$ such $g$ and each $g$ is counted by at most
$|Y|^2<\frac{t^2}{8}$ different $e$. Altogether we
obtain at least
$$\frac{10\nu'' t^3 \times \frac{t^2}{5}}{\frac{t^2}{8}} > 2\nu'' t^3$$
distinct triples $g$ that are consistent with  $P_{\phi}$ but are
not edges of $\cJ_{\phi}$.  This contradicts (\ref{nu''}) and we may
therefore suppose that either $|A \cap Y|<3(\nu'')^{1/3}t$ or $|B
\cap Y|<3(\nu'')^{1/3}t$.

Next suppose that $A'=A \cap X$ and $B'=B \cap X$ satisfy
$\min\{|A'|, |B'|\}\ge 3(\nu'')^{1/3}t$.  Then we have at least
$10\nu''t^3$ edges $e \in \cJ_{\xi}$ with $|e \cap A'|=2$ and $|e
\cap B'|=1$.  For each such edge $e=\{k,k',k''\}\subset X$, and each
$(i,j) \in (X-e)\times Y$, consider three distinct triples
$f=\{i,j,k\},f'=\{i,j,k'\}, f''=\{i,j,k''\}$ that are consistent
with $P_{\phi}$. If $f, f', f'' \in J_{\phi}$ then consider the ten
bipartite graphs defined above. Set $G=\bigcup G^{uv}$ where the
union is over these ten bipartite graphs.
 Since $\{e,f,f', f''\} \subset \cJ_{\phi} \cup \cJ_{\xi}$, the 3-graph  $J=\{e, f,f', f''\}$ associated with the
 $5$-partite graph $G$ and $3$-graph $\HH(\{i,j,k,k',k''\})$ is a cluster $3$-graph.
Again, by the Embedding Lemma we obtain the contradiction  $T_5 \subset
\HH$. We conclude that $g\not\in \cJ_{\phi}$ for some $g\in \{f, f',
f''\}$.  Each $e$ gives rise to at least $(|X|-3)|Y|>\frac{t^2}{5}$
such $g$ and each $g$ is counted by at most
$|X|^2 < \frac{t^2}{2}$ different $e$. Altogether we obtain at least
$$\frac{10\nu'' t^3 \times \frac{t^2}{5}}{\frac{t^2}{2}} > 2\nu'' t^3$$
distinct triples $g$ that are consistent with  $P_{\phi}$ but are
not edges of $\cJ_{\phi}$.  This contradicts (\ref{nu''}).

We may therefore suppose that

(i) $|A \cap Y|<3(\nu'')^{1/3}t$ or $|B \cap Y|<3(\nu'')^{1/3}t$ and

(ii) $|A \cap X|<3(\nu'')^{1/3}t$ or $|B \cap X|<3(\nu'')^{1/3}t$.

Let us now show that (i) and (ii) imply that
\begin{equation} \label{abxy}
|A \cap Y| + | B \cap X| <6(\nu'')^{1/3} t.\end{equation}
If $|A \cap Y| \ge 3(\nu'')^{1/3}t$, then by (i) we have $|B \cap Y| < 3(\nu'')^{1/3}t$. Consequently,
$$|A \cap X|=|A-Y|\ge |A|-|Y|\ge (1-\sqrt{\nu''})\frac{2t}{3}-(1+2\sqrt{\nu''})\frac{t}{3}>3(\nu'')^{1/3}t$$
and also
$$|B \cap X|=|B-(B \cap Y)|\ge (1-2\sqrt{\nu''})\frac{t}{3}-3(\nu'')^{1/3}t>3(\nu'')^{1/3}t.$$
This contradicts (ii) so we may assume that $|A \cap Y| < 3(\nu'')^{1/3}t$.

If $|B \cap X| \ge 3(\nu'')^{1/3}t$, then by (ii), we have $|A \cap X| <  3(\nu'')^{1/3}t$. This yields the contradiction
$$|X|=|A\cap X|+| B \cap X|< 3(\nu'')^{1/3}t +|B|<3(\nu'')^{1/3}t+(1+2\sqrt{\nu''})\frac{t}{3}
<(1-\sqrt{\nu''})\frac{2t}{3}.$$
We may therefore also assume that $|B \cap X| < 3(\nu'')^{1/3}t$ and now (\ref{abxy}) follows.

A consistent edge of $P_{\xi}$ that is inconsistent with $P_{\phi}$
must have a point in $(A \cap Y)\cup  (B \cap X)$, hence the number
of such edges is at most $6(\nu'')^{1/3} t{t \choose
2}<4(\nu'')^{1/3}t^3$ as required.  \qed

\section{Proof of Theorem \ref{maint}}\label{sectproofmain}

\subsection{Preliminaries}\label{subsecpre}

 Recall that  the binary entropy function $H(x):=
x\log_2 1/x+ (1-x)\log_2 1/(1-x).$
We shall use Chernoff's inequality in the form below:

\begin{theorem}\label{chernoff}
Let $X_1,\ldots,X_m$ be independent $\{0,1\}$ random variables with
$P(X_i=1)=p$ for each $i$. Let $S=\sum_i X_i$. Then the following
inequality holds for
$a>0$:\\
$$P(S < \eee S - a) < \exp(-a^2/(2pm)).$$
\end{theorem}

We shall also need the following easy statements.

\begin{lemma}\label{matching}
Every graph $G$ with $n$ vertices contains a matching of size at
least $\frac{|G|}{2n}$.
\end{lemma}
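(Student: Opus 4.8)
The plan is to use a maximum matching and a simple double-counting argument. Let $M$ be a matching in $G$ of maximum size, and let $W$ be the set of vertices covered by $M$, so $|W|=2|M|$. First I would observe that every edge of $G$ must be incident to a vertex of $W$: if some edge $e$ had both endpoints outside $W$, then $M\cup\{e\}$ would be a strictly larger matching, contradicting maximality. Hence every edge of $G$ is counted when we sum degrees over $W$.

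Next I would bound that sum. Since $G$ has $n$ vertices, every vertex has degree at most $n-1$, so
\[
|G| \;\le\; \sum_{v\in W} \deg_G(v) \;\le\; |W|\,(n-1) \;=\; 2|M|\,(n-1) \;<\; 2n\,|M|.
\]
Therefore $|M| > \tfrac{|G|}{2n}$, and in particular $|M|\ge \tfrac{|G|}{2n}$, which is the claimed bound. (If $|G|=0$ the statement is trivial.)

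There is essentially no obstacle here; the only thing to be careful about is that an edge with both endpoints in $W$ is counted twice in $\sum_{v\in W}\deg_G(v)$, which only helps the inequality, and that degrees are at most $n-1$ rather than $n$, which is why the final inequality is even strict. A fully equivalent alternative, if one prefers an algorithmic phrasing, is the greedy argument: repeatedly delete an edge together with its two endpoints and all incident edges; each deletion removes at most $\deg(u)+\deg(v)-1\le 2n-3<2n$ edges, so the process performs at least $|G|/(2n)$ steps, each contributing one edge to a matching. Either phrasing gives the lemma.
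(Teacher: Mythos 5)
Your proof is correct. The paper itself states Lemma~\ref{matching} without any proof, labeling it among ``the following easy statements,'' so there is no paper argument to compare against; you are supplying an argument the authors left implicit. Both versions you offer are sound: the maximum-matching double-count (every edge meets the vertex set $W$ of a maximum matching $M$, and $|G|\le\sum_{v\in W}\deg(v)\le 2|M|(n-1)<2n|M|$) and the greedy deletion argument (each chosen edge kills at most $2n-3<2n$ edges) are the two standard proofs, and either gives the stated bound. One small remark: the inequality you derive is actually strict, $|M|>|G|/2n$ when $|G|>0$, which is slightly stronger than the lemma requires; and the $|G|=0$ case is handled trivially, as you note. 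No gaps.
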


\subsection{Lower-density}\label{sublower}

Our goal in this section is twofold: First to define a subset
$Forb(n,T_5,\eta,\mu)\subset Forb(n,T_5,\eta)$ which comprises
3-graphs with ordered partitions $(X,Y)$ that have a collection
of useful properties. Second, to prove that most 3-graphs in
$Forb(n,T_5,\eta)$ are in $Forb(n,T_5,\eta,\mu)$.

 Let $\HH\in
Forb(n,T_5,\eta)$ and let $(X,Y)$ be an ordered partition of
the vertices of $\HH$ which minimizes the number of inconsistent
edges. We call such a partition {\it optimal}. For a vertex $x$
let $L_{X,X}(x)$ be the set of edges containing $x$,
 and having the other two vertices in $X$, and let
 $ L_{X,Y}(x)$ and $L_{Y,Y}(x)$ be similarly defined. Sometimes, trusting that it will not cause confusion, we refer to
 $L_{X,X}(x)$ as the link graph of $x$ on $X$.  As before, we often associate a graph or hypergraph with its edge set.

\begin{definition}\label{lowdef} An ordered partition $(X,Y)$ is $\mu$-{\it lower-dense} if each  of
 the following is satisfied:\\
 (i) For every matching $G_1\subset {X\choose 2}$ and every graph
 $G_2\subset X\times Y$ with $|G_1|>\mu n, |G_2|> \mu n^2$ the
 following holds:
$$|\{(ab,uv):\ ab\in G_2, uv\in G_1, abu, abv\in \HH  \}|>\frac{|G_1||G_2|}{72}.$$
(ii) For every graph $G_1\subset {X\choose 2}$ and every matching
 $G_2\subset {Y\choose 2}$ with $|G_1|>\mu n^2, |G_2|> \mu n$ the
 following holds:
$$|\{(ab,uv):\ ab\in G_2, uv\in G_1, auv, buv\in \HH  \}|>\frac{|G_1||G_2|}{8}.$$
(iii) For every $A_X\subset X, A_Y\subset Y$ with $|A_X|,|A_Y|\ge \mu n$
the following holds:
$$|\{E\in \HH: \ |E\cap A_X|=2, |E\cap A_Y|=1 \}|> \frac{|A_X|^2 |A_Y|}{8}.  $$
(iv) Let $Y'\subset Y$ with $|Y'| \ge 2\mu n$, and suppose that for
every
$y\in Y'$ we have an $X_y\subset X$ with $ |X_y|> 200 \mu n$. Then
$$|\{E\in \HH: \ \exists y\in Y' \text{ s.t. } |E\cap X_y|=2, y\in E \}|> 10000\mu^3 n^3.  $$
(v) $||Y|-n/3|<\mu n.$\\

We say that an $\HH\in Forb(n,T_5,\eta)$ is $\mu$-{\it lower-dense}
if each of its optimal partitions  satisfies conditions (i)-(v). Let
$Forb(n,T_5,\eta,\mu)\subset Forb(n,T_5,\eta)$ be the collection
$\mu$-{lower-dense} hypergraphs.
\end{definition}

\begin{lemma}\label{t5lower}
Let  $1000H(\eta)< \mu^3$ and $\mu$ be sufficiently small. Then for
$n$ sufficiently large
$$|Forb(n,T_5,\eta)-Forb(n,T_5,\eta,\mu)|\ <\
2^{n^3(\frac{2}{27}-\frac{\mu^3}{500})}.$$
\end{lemma}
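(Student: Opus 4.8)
The plan is to show that for each of the five properties (i)--(v) in Definition~\ref{lowdef}, the number of $\HH \in Forb(n,T_5,\eta)$ which have \emph{some} optimal partition violating that property is at most $2^{n^3(2/27 - \mu^3/400)}$ or so, and then take a union bound over the (constantly many, or at most $2^{O(n^2)}$) choices of the offending partition and over the five properties. The key point is that if $(X,Y)$ is a fixed ordered partition, then $Forb(n,T_5,\eta)$ restricted to 3-graphs whose optimal partition is $(X,Y)$ is contained in the set of all $\HH$ that are ``almost consistent'' with $(X,Y)$ (at most $\eta n^3$ inconsistent edges), and this set has size at most $2^{b^3(n) + \eta n^3 + H(\eta)n^3 + o(n^3)} \le 2^{(2/27 + 3H(\eta))n^3}$, since we may choose the $\le b^3(n)$ consistent edges freely and the $\le \eta n^3$ inconsistent edges from among $\binom{[n]}{3}$ in at most $2^{H(\eta)n^3}$ ways using \eqref{x}. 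Against this baseline we must beat the count by a factor $2^{\Omega(\mu^3 n^3)}$ for each violated property, which by the hypothesis $1000 H(\eta) < \mu^3$ leaves room to spare: $3H(\eta)n^3 < \mu^3 n^3/300 \ll \mu^3 n^3/500$.

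For a fixed partition $(X,Y)$, I would bound the number of ``bad'' hypergraphs property by property. Properties (i)--(iv) all have the same flavor: they assert that a certain configuration of consistent triples must be \emph{present} in abundance. So if $\HH$ violates, say, (iii), there exist sets $A_X \subset X$, $A_Y \subset Y$ with $|A_X|,|A_Y| \ge \mu n$ such that fewer than $|A_X|^2|A_Y|/8$ of the triples with two points in $A_X$ and one in $A_Y$ are edges of $\HH$. Fix $A_X, A_Y$ (at most $2^{2n}$ choices, negligible); the triples of type $(A_X,A_X,A_Y)$ number $M := \binom{|A_X|}{2}|A_Y| \ge \mu^3 n^3/4$, and we are choosing fewer than $M/8$ of them to be edges, which can be done in at most $\sum_{i < M/8}\binom{M}{i} \le 2^{H(1/8)M} \le 2^{0.55 M} \le 2^{M - M/3}$ ways, whereas choosing them freely would give $2^M$; the remaining edges of $\HH$ (there are at most $\binom n3$ of them) are chosen in at most $2^{\binom n3}$ ways but these overlap the free count, so more carefully: the total count of bad $\HH$ is at most $2^{2n} \cdot 2^{0.55M} \cdot 2^{\binom{n}{3} - M} \le 2^{\binom n3 - M/3} \le 2^{\binom n3 - \mu^3 n^3/12}$. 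Intersecting with the ``$\le \eta n^3$ inconsistent edges'' constraint brings this down to $2^{(2/27 + 3H(\eta))n^3 - \mu^3 n^3/12} \le 2^{n^3(2/27 - \mu^3/500)}$. Properties (i), (ii), (iv) are handled identically: in each case the required configuration lives on $\Omega(\mu^3 n^3)$ potential edges (using Lemma~\ref{matching} to guarantee the matchings $G_1$ or $G_2$ exist of size $\Omega(n)$ whenever the relevant graph is dense, and noting for (i),(ii) that it suffices to forbid the configuration only for \emph{maximum} matchings, reducing the number of $(G_1,G_2)$ pairs to consider), forcing fewer than a constant fraction of them to be edges costs a $2^{\Omega(\mu^3 n^3)}$ factor in the entropy count, and we win. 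Property (v), $||Y| - n/3| < \mu n$, follows from the stability picture: if $(X,Y)$ is optimal and has far from $2n/3$ vertices in $X$, then even the \emph{complete} 3-graph consistent with $(X,Y)$ has at most $\binom{|X|}{2}|Y| \le (2/27 - c\mu^2)n^3$ edges for some $c > 0$, so $|\HH| \le (2/27 - c\mu^2)n^3 + \eta n^3 + |\cE_0|$... actually cleaner: the total number of $\HH \in Forb(n,T_5,\eta)$ with an optimal partition of unbalanced type is at most $2^{(\binom{|X|}{2}|Y|) + \eta n^3 + H(\eta)n^3} \le 2^{n^3(2/27 - c\mu^2 + 2H(\eta))}$, again beating the target.

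The main obstacle I anticipate is \textbf{the ``optimality'' quantifier}: Definition~\ref{lowdef} declares $\HH$ bad if \emph{some} optimal partition violates (i)--(v), and a priori $\HH$ could have many optimal partitions, so a naive union bound over \emph{all} $\binom{n}{n/3}$ partitions would cost a $2^{H(1/3)n} = 2^{\Theta(n)}$ factor --- harmless against an $n^3$-scale saving, so in fact this is fine, but one must be slightly careful to union-bound over the partition $(X,Y)$ \emph{first}, then over the auxiliary sets ($A_X,A_Y$ or the $X_y$'s), and only then count edge-sets. A genuine subtlety is property (iv), where the sets $X_y$ vary with $y \in Y'$: here one cannot afford $2^{|X|}$ choices for each of $|Y'|$ vertices. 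The fix is that we only need a \emph{lower} bound $10000\mu^3 n^3$ on the number of edges hitting \emph{some} $(X_y, X_y, y)$ configuration, and a configuration like $T_5$ or the embedding machinery is not needed --- it is a pure counting/entropy statement --- so one instead argues: fix $(X,Y)$; the event that there \emph{exist} such $X_y$'s with the edge-count small is the event that for every $y$, at most (something) triples of the form $\{a,b,y\}$ with $a,b \in X$ are non-edges-avoided... I would instead phrase (iv)'s negation via a greedy/averaging argument showing it forces $\Omega(\mu^3 n^3)$ specific pairs $(y,\{a,b\})$ to be non-edges, again an entropy saving of $2^{\Omega(\mu^3 n^3)}$, absorbing the $2^{O(n\log n)}$ overhead of specifying the $X_y$'s. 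Modulo that bookkeeping, every case reduces to the same template: \emph{violating a lower-density property forces $\Omega(\mu^3 n^3)$ potential edges to behave non-generically, and the hypothesis $1000H(\eta) < \mu^3$ guarantees this saving dominates the $H(\eta)n^3$ slack in the baseline count.}
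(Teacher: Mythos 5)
Your overall plan matches the paper's: fix an optimal partition $(X,Y)$ (union bound $2^n$), fix the auxiliary sets (polynomial or $2^{O(n^2)}$ overhead), and for each property show that its violation forces a concentration failure among the consistent triples, giving an entropy (or, equivalently, Chernoff) saving of $2^{\Omega(\mu^3 n^3)}$ against the baseline $2^{2n^3/27 + O(H(\eta))n^3}$, with $1000 H(\eta) < \mu^3$ supplying the room. Your detailed treatment of (iii) is exactly the paper's Chernoff calculation in different clothing, and your handling of (v) and the quantifier over optimal partitions is correct.

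However, there is a genuine gap: you claim properties (i), (ii), (iv) are ``handled identically'' to (iii), but (i) is not, and this is the one delicate point in the paper's proof. In (i) the objects being counted are \emph{pairs} $(ab,uv)$ with $ab\in G_2$, $uv\in G_1$, and the associated indicator for $\{abu,abv\subset\HH\}$ is a function of two triples. These triples are \emph{not} distinct across pairs: if $u_1v_1, u_2v_2 \in G_1$ and $u_1 b, u_2 b \in G_2$, then the triple $u_1u_2b$ is shared by the pairs $(u_1b,\, u_2v_2)$ and $(u_2b,\, u_1v_1)$, so the pair-indicators are not independent and neither Chernoff on the pairs nor a naive entropy count over ``potential edges'' yields the claimed saving directly. (Contrast with (ii), where because $G_2\subset\binom{Y}{2}$ is a matching, the sets $\{auv,buv\}$ over distinct pairs $(ab,uv)$ really are disjoint, and with (iii), where the triples are literally distinct by construction.) The paper resolves this by first passing to vertex-disjoint subgraphs $G_1'\subset G_1$, $G_2'\subset G_2$ with $|G_1'|\ge|G_1|/3$, $|G_2'|\ge|G_2|/3$ (via a short probabilistic deletion argument), after which the pair-indicators are genuinely independent and Chernoff applies. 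Without this (or an alternative concentration tool tolerating bounded dependence, e.g. a martingale/Azuma argument over the individual triples), your count for property (i) does not go through as written. A minor quibble: for (iv) the overhead for specifying $\{X_y : y\in Y'\}$ is $2^{O(n^2)}$, not $2^{O(n\log n)}$, but this is still harmless against the $n^3$-scale saving.
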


\begin{proof}
We count the number of hypergraphs $\HH\in
Forb(n,T_5,\eta)-Forb(n,T_5,\eta,\mu)$ violating conditions (i)-(v)
 separately: We shall use the following estimates in many of the
 cases.
The number of ways to choose an ordered partition of $\HH$ is at
most $2^n$. In what follows let us assume that we are given such a
partition $(X, Y)$.  The number of ways the at most $\eta n^3$ inconsistent
edges could be placed is at most $2^{H(\eta)n^3}$, the number of
ways a subset of vertices could be chosen is at most $2^n$, the
number of ways a matching (of graph edges) could be chosen is at most $2^{n\log n}$,
and the number of ways a graph could be chosen is at most $2^{n^2}.$  The number of ways the consistent
edges could be chosen is at most $2^{\frac{|X|^2}{2}|Y|}\le 2^{2n^3/27}$. For this last bound, we will give some improvements using the fact that $\HH$ is not  $\mu$-lower-dense.

 For a fixed partition  of the vertex set,
 we may view the consistent edges as a probability space, where we choose each of them, independently, with probability $1/2$.
We use  Chernoff's inequality to show that the probability that a
particular condition of the definition of  $\mu$-lower density is
violated is low, yielding an upper bound on the number of ways of
choosing
the consistent edges of $\HH$.\\

(i) Given  the choice of $G_1$ and $G_2$,
 there are  $|G_1||G_2| \ge \mu^2 n^3$
possible pairs of  edges to be included mentioned in the
condition. However not all the edges are distinct, for example
if $u_1v_1, u_2v_2$ are edges in $G_1$ and $u_1b,u_2b$ are edges in $G_2$ then
the triple $u_1u_2b$ is considered for two pairs of edges:
$(u_1b,u_2v_2)$ and $(u_2b, , u_1v_1)$. In order to avoid this overcounting (which manifests itself as a lack of independence in a probability calculation) we shall choose subgraphs $G_1'\subset G_1,
G_2'\subset G_2$, such that $G_1',G_2'$ are vertex disjoint, and
$|G_1'|\ge \mu n/3, |G_2'|\ge \mu n^2/3.$

 We prove the existence of such $G_1'$ and $G_2'$ by randomly picking each edge of the matching $G_1$ with probability $1/2$,
  where these choices are independent for distinct edges.  Let $H_1$ be the (random) set of edges that were picked.
  Let $H_2$ be the (random) set of edges of $G_2$ that are disjoint from all edges of $H_1$.
   Then  $|H_1|$ is a binomial random variable with parameters $|G_1|$ and 1/2 and $|H_2|$ dominates a binomial
    random variable with parameters $|G_2|$ and 1/2.  The reason for this is that for $e \in G_2$, the probability that
    $e \in H_2$ is 1/2 or 1, depending on whether $e$ is incident to an edge of $G_1$ or not.
  So by Chernoff's inequality,
$$P(|H_i| < |G_i|/3)=P(|H_i| < |G_i|/2 -|G_i|/6)<\exp(-|G_i|/36)<\frac{1}{2}.$$
Consequently, $$P(|H_1| \ge |G_1|/3 \hbox{ and } |H_2| \ge |G_2|/3)>0$$
 and there exist $G_1'$ and $G_2'$ as above.

For each $uv \in G_1'$ and $ab \in G_2'$ let $X_{ab, uv}$ be the random variable that is 1 if both $abu, abv \in \HH$ and 0 otherwise.  Then $P(X_{ab, uv}=1)=1/4$, and since $G_1'$ and $G_2'$ are vertex disjoint, these random variables are independent.
We apply Chernoff's
inequality to these $m=|G'_1||G'_2|$ random variables with
 $a=m/8$ and $p=1/4$.  For $S=\sum_{uv \in G_1', ab \in G_2'}X_{ab, uv}$ this gives
 $$P\left(S\le \frac{|G_1||G_2|}{72}\right)\le P(S\le m/8) \le \exp\left(-\frac{(m/8)^2}{(m/2)}\right)=\exp(-m/32)<\exp\left(-\frac{\mu^2 n^3}{9\cdot 32}\right).$$

  Using this upper bound  we obtain that the number of hypergraphs that violate condition (i) is upper
bounded by
 $$2^{n+H(\eta)n^3+ n\log n+n^2+2n^3/27}\exp(-\mu^2 n^3/(9\cdot 32))< 2^{2n^3/27-\mu^2n^3/300}.$$

(ii) Given  the choice of $G_1$ and $G_2$,
 there are  $|G_1||G_2| \ge \mu^2 n^3$
possible pairs of  edges to be included mentioned in the
condition. Unlike in case (i), here all the edges are distinct so we do not need to construct $G_i'$.

For each $uv \in G_1$ and $ab \in G_2$ let $X_{ab, uv}$ be the random variable that is 1 if both $uva, uvb \in \HH$ and 0 otherwise.  Then $P(X_{ab, uv}=1)=1/4$, and these random variables are independent.
We apply Chernoff's
inequality to these $m=|G_1||G_2|$ random variables with
 $a=m/8$ and $p=1/4$.  For $S=\sum_{uv \in G_1, ab \in G_2}X_{ab, uv}$ this gives
 $$P\left(S\le \frac{|G_1||G_2|}{8}\right)\le P(S\le m/8) \le \exp\left(-\frac{(m/8)^2}{(m/2)}\right)=\exp(-m/32)
 \le \exp\left(-\frac{\mu^2 n^3}{32}\right).$$

 Using this upper bound  we obtain that the number of hypergraphs that violate condition (ii) is upper
bounded by
 $$2^{2n^3/27-\mu^2n^3/32}.$$
 (iii) Given  the choice of $A_X$ and $A_Y$,  there are
$|A_X|(|A_X|-1)|A_Y|/2 \ge \mu^3 n^3/3=:m$ possible edges  of $\HH$ with
two vertices in $A_X$ and one in $A_Y$.
 Using  Chernoff's inequality (with
$p=1/2$) we obtain that the number of hypergraphs violating
condition (iii) is at most   $$2^{3n+H(\eta)n^3+ 2n^3/27}\exp(-\mu^3
n^3/24)<2^{2n^3/27-\mu^3 n^3/24}.$$
(iv)  Given the ordered $2$-partition,  there are at most $2^n$
choices for each of $X_y$ and of $Y'$. Also  $$\left|\left\{E\in
\binom{[n]}{3}: \ \exists y\in Y' \text{ s.t. } |E\cap X_y|=2, y\in
E \right\}\right| \ge 2\mu n {200 \mu n \choose 2}>
 35000\mu^3 n^3.$$ By  Chernoff's inequality we obtain
that the number of hypergraphs violating condition (iv) is at
most
$$2^{2n^2+ H(\eta)n^3+ 2n^3/27}\exp(-\mu^3 n^3)<2^{2n^3/27-\mu^3 n^3}.$$
Note that in the computation above we used  $1000H(\eta)< \mu^3$ and
$n$ is sufficiently large.

(v) In this case we show that if ratio of the parts of the ordered partition differ too
much from  2, then the number of ways to place the consistent edges
decreases exponentially.  This is simply because the number of consistent edges is small. More precisely,
 if $||Y|-n/3|\ge \mu n$ then the number of possible consistent edges is
at most $(2/27-\mu^2/2+\mu^3/2)n^3<(2/27-\mu^2/3)n^3$. This implies that the number of such hypergraphs is at most
$$2^n\cdot 2^{n^3(2/27+H(\eta)-\mu^2/3)}<2^{n^3(2/27-\mu^2/6)}.$$
Summing up the number of 3-graphs  in cases (i)--(v) gives
\begin{eqnarray*}& & |Forb(n,T_5,\eta)-Forb(n,T_5,\eta,\mu)| \\
&\le& 2^{2n^3/27} \left(2^{-\mu^2n^3/300}+  2^{-\mu^2n^3/32}
+2^{-\mu^3 n^3/24} +2^{-\mu^3 n^3}+2^{-\mu^2/6}\right)\\ &<&
2^{2n^3/27 -\mu^3n^3/500}.\end{eqnarray*}
This completes the proof of the lemma.
\end{proof}

\subsection{Getting rid of bad vertices}\label{subbadvert}

From now on we shall have the following hierarchy of constants:
$1\gg \alpha\gg \beta\gg \mu\gg \eta$. More precisely we will assume
\begin{equation}\label{parameters} 0.01>H(\alpha),\  \ \alpha^2
>100( H(\beta)+H(2\mu)+\mu ^2),\ \beta> 100H(2\mu),\ \mu^3\ge 1000
H(\eta).\end{equation}

In this section we prove additional properties of hypergraphs in
$Forb(n,T_5,\eta,\mu)$ which involve the link graph of vertices.

\begin{lemma}\label{badvertices}
Let $\HH\in Forb(n,T_5,\eta,\mu)$ with an  optimal ordered partition
$(X,Y)$.
Then the following hold.\\
(i) For $x\in X$ we have $|L_{X,X}(x)|\le 2\mu n^2.$\\
(ii) For $y\in Y$ we have $|L_{X,Y}(y)|\le 2\mu n^2.$\\
(iii) For $y\in Y$ we have $\min\{|L_{X,X}(y)|,|L_{Y,Y}(y)|\}< 2\mu
n^2.$
\end{lemma}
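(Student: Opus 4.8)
The plan is to prove all three statements by contradiction, in each case using the $T_5$-freeness together with the lower-density properties of Definition \ref{lowdef} to manufacture a copy of $T_5$, or else to contradict the optimality of the partition $(X,Y)$. The combinatorial heart is the same in each case: a copy of $T_5=\{123,124,125,345\}$ consists of a ``base pair'' $\{1,2\}$ joined to three vertices $3,4,5$, together with one more edge $\{3,4,5\}$ among those three vertices. So if some vertex $v$ has a large link graph of a given type, we first find many base pairs $\{a,b\}$ such that $\{a,b,v\}\in\HH$ and such that $\{a,b\}$ also sends edges (through $\HH$) to two further vertices forming an edge with each other; this last step is exactly what the lower-density conditions (i)--(iii) guarantee.

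\textbf{Part (i).} Suppose $|L_{X,X}(x)|>2\mu n^2$ for some $x\in X$. By Lemma \ref{matching} the graph $L_{X,X}(x)$ contains a matching $G_1\subset\binom{X}{2}$ of size at least $|L_{X,X}(x)|/(2n)>\mu n$. Let $G_2=L_{X,Y}(x)\subset X\times Y$ be the link of $x$ on $X\times Y$; if $|G_2|\le\mu n^2$ then $x$ has fewer than $2\mu n^2+\mu n^2+|L_{Y,Y}(x)|$ edges total\dots\ instead I argue directly: apply condition (i) of Definition \ref{lowdef} to $G_1$ and to $G_2:=X\times(Y\cap N)$ for a suitable large set, concluding there are more than $|G_1||G_2|/72$ pairs $(ab,uv)$ with $ab\in G_2$, $uv\in G_1$, $abu,abv\in\HH$. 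For each such pair, $\{a,b,u\},\{a,b,v\}\in\HH$ and $\{a,b,x\}\in\HH$ (since $uv\in G_1\subset L_{X,X}(x)$ gives\dots). The cleanest route: for each edge $uv$ of the matching $G_1$ we have $\{u,v,x\}\in\HH$; condition (i) then produces a pair $ab$ with $abu,abv\in\HH$, so $\{a,b,u,v,x\}$ spans $T_5$ with base pair on $\{u,v\}$ unless the five vertices are not distinct — and distinctness is arranged because $G_1$ is a matching and there are many choices of $ab$. This contradicts $T_5\not\subset\HH$. Hence $|L_{X,X}(x)|\le 2\mu n^2$.

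\textbf{Part (ii).} Suppose $|L_{X,Y}(y)|>2\mu n^2$ for some $y\in Y$. The link $L_{X,Y}(y)$ is a bipartite graph between $X$ and $Y\setminus\{y\}$; one of the two induced orientations has a matching of size $>\mu n$, but more to the point I extract from it a matching $G_2\subset\binom{Y}{2}$ after first finding many vertices of $X$ that see $y$ together with a $Y$-vertex — then apply condition (ii) of Definition \ref{lowdef} with $G_1$ a dense subgraph of $\binom{X}{2}$ coming from the $X$-side of the link and $G_2$ a matching coming from the $Y$-side, to get $>|G_1||G_2|/8$ pairs $(ab,uv)$ with $auv,buv\in\HH$; combined with the edges $\{a,u,y\}$ or $\{a,v,y\}$ of the link this again forces a $T_5$. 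As before distinctness of the five vertices is arranged by choosing from the many available pairs, and we reach the contradiction $T_5\subset\HH$.

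\textbf{Part (iii).} Suppose both $|L_{X,X}(y)|\ge 2\mu n^2$ and $|L_{Y,Y}(y)|\ge 2\mu n^2$ for some $y\in Y$. This is the case I expect to be the main obstacle, since neither link directly contradicts $T_5$-freeness on its own (an edge $\{a,b,y\}$ with $a,b\in X$ is a \emph{consistent} edge, and $\{a,b,y\}$ with $a,b\in Y$ is inconsistent but the partition being optimal does not forbid a bounded number of these — it forbids \emph{too many} at one vertex only via a global count). The idea is: moving $y$ from $Y$ to $X$ changes the consistency status of every edge through $y$. An edge $\{a,b,y\}$ with $a,b\in X$ was consistent and becomes inconsistent; an edge $\{a,b,y\}$ with $a\in X,b\in Y$ was consistent (two points in $X$? no — one point in $X$, hence inconsistent before, and after the move it has two points in $X$, so consistent); an edge $\{a,b,y\}$ with $a,b\in Y$ was inconsistent and becomes\dots\ still has no point in the new $X$ other than $y$, so one point in $X$, consistent. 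So moving $y$ turns $L_{X,X}(y)$-edges bad and turns $L_{Y,Y}(y)$-edges and $L_{X,Y}(y)$-edges good; optimality of $(X,Y)$ then forces $|L_{X,X}(y)|\ge |L_{Y,Y}(y)|+|L_{X,Y}(y)|\ge |L_{Y,Y}(y)|$, and symmetrically (moving $y$ the other way is not available, but one instead applies a similar move to a vertex of $X$, or uses part (ii) to bound $|L_{X,Y}(y)|\le 2\mu n^2$ and then a direct $T_5$-embedding argument). The combination $|L_{X,X}(y)|$ large forces, via condition (iii) of Definition \ref{lowdef} applied to $A_X=N_{L_{X,X}(y)}$-type sets and $A_Y$ a large subset of $Y$, many edges with two points in $A_X$ and one in $A_Y$; each such edge $\{a,b,c\}$ with $a,b\in X$, $c\in Y$, together with $\{a,b,y\}\in\HH$ and appropriate further link edges at $c$ or $y$, yields a $T_5$. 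I would organize Part (iii) by first invoking (ii) to assume $|L_{X,Y}(y)|\le 2\mu n^2$, then running an optimality/exchange argument to bound $\min\{|L_{X,X}(y)|,|L_{Y,Y}(y)|\}$ and, where the exchange argument is not decisive, closing with a $T_5$-embedding built from condition (iii). The routine parts are the distinctness bookkeeping and the constant chasing against \eqref{parameters}, which I omit here.
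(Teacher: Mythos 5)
There is a real gap, and it is the same one in all three parts: you never actually establish the size hypotheses required to invoke the lower-density conditions of Definition~\ref{lowdef}. In part~(i), you need a graph $G_2\subset X\times Y$ with $|G_2|>\mu n^2$, and the paper gets this from the optimality of $(X,Y)$: if $|L_{X,X}(x)|>2\mu n^2$ then moving $x$ from $X$ to $Y$ would turn the $L_{X,X}(x)$ edges consistent and the $L_{X,Y}(x)$ edges inconsistent, so optimality forces $|L_{X,Y}(x)|\ge |L_{X,X}(x)|>2\mu n^2$; then one can take $G_1$ a matching in $L_{X,X}(x)$ and $G_2=L_{X,Y}(x)$ and apply condition~(i). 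You begin exactly this argument (``if $|G_2|\le\mu n^2$ then\dots''), then abandon it and replace $G_2$ by ``$X\times(Y\cap N)$ for a suitable large set,'' which is never defined and never shown to be large. Without the optimality step the application of condition~(i) is unjustified. The identical optimality step is also what the paper uses in part~(ii): if $|L_{X,Y}(y)|>2\mu n^2$ then optimality forces $|L_{X,X}(y)|>2\mu n^2$, and then condition~(i) (not condition~(ii), which you propose) is applied with $G_1$ a matching in $L_{X,X}(y)$ and $G_2=L_{X,Y}(y)$, producing $\{aby,uvy,abu,abv\}\cong T_5$. Your sketch for~(ii) is too vague to check and, as written, uses the wrong condition.

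In part~(iii) your exchange arithmetic is incorrect. Moving $y$ from $Y$ to $X$ turns an edge $\{a,b,y\}$ with $a,b\in Y$ from $|e\cap X|=0$ to $|e\cap X|=1$ --- still inconsistent, not consistent as you claim. The correct accounting is that the inconsistent edges through $y$ change from $|L_{Y,Y}(y)|+|L_{X,Y}(y)|$ to $|L_{X,X}(y)|+|L_{Y,Y}(y)|$, so optimality gives only $|L_{X,X}(y)|\ge |L_{X,Y}(y)|$, not $|L_{X,X}(y)|\ge |L_{Y,Y}(y)|+|L_{X,Y}(y)|$; in any case the paper uses no exchange argument here. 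Instead it applies condition~(ii) of Definition~\ref{lowdef} directly to $G_1=L_{X,X}(y)$ and a matching $G_2$ of size $\ge\mu n$ extracted from $L_{Y,Y}(y)$ via Lemma~\ref{matching}, obtaining $uv\in G_1$ and $ab\in G_2$ with $auv,buv\in\HH$; with $uvy$ and $aby$ this is a copy of $T_5$. Your plan instead invokes condition~(iii), which asserts a density bound for edges of type $A_X$--$A_X$--$A_Y$ and does not obviously produce the structure needed; you acknowledge uncertainty here but do not close it. As a result, none of the three parts is actually proved as written.
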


We remark here  that the lack of similar bounds for $x\in X$ on $|L_{X,Y}(x)|$ makes
the proof of the main result  complicated.

\begin{proof}
(i) Assume that for some $x\in X$ we have $|L_{X,X}(x)|> 2\mu n^2.$
By the optimality of the partition we have $|L_{X,Y}(x)|> 2\mu n^2$
as well. By Lemma~\ref{matching} $L_{X,X}(x)$ contains a matching
$G_1$ of size at least $\mu n$. With $G_2=L_{X,Y}(x)$, using
property (i) of the definition of $\mu$-lower-density, there exists
an $ab\in G_2$ and $uv\in G_1$ such that $abu,abv\in \HH$. Together
with $abx$ and $uvx$, we obtain   $T_5$ in $\HH$, a contradiction.

 (ii) Assume that for
some $y\in Y$ we have  $|L_{X,Y}(y)|> 2\mu n^2.$ By the optimality
of the partition we have $|L_{X,X}(y)|> 2\mu n^2$ as well. By
Lemma~\ref{matching} $L_{X,X}(y)$ contains a matching $G_1$ of size
at least $\mu n$. With $G_2=L_{X,Y}(y)$, using property (i) of the
definition of $\mu$-lower-density, there exists an $ab\in G_2$ and
$uv\in G_1$ such that $abu,abv\in \HH$. Together with $aby$ and
$uvy$ we obtain a $T_5$ in $\HH$, a contradiction.

 (iii) Assume that for some $y\in Y$ we have
$|L_{X,X}(y)|,|L_{Y,Y}(y)|> 2\mu n^2.$ By Lemma~\ref{matching}
$L_{Y,Y}(y)$ contains a matching $G_2$ of size at least $\mu n$.
With $G_1=L_{X,X}(y)$, using property (ii) of the definition of
$\mu$-lower-density, there exists an $ab\in G_2$ and $uv\in G_1$
such that $auv,buv\in \HH$. Together with $aby$ and $uvy$  we obtain
a $T_5$ in $\HH$, a contradiction.
\end{proof}

For a set $S\subset [n]$ of size  two and for $A\subset [n]$, we
define $L_A(S)$ to be the set of vertices $v\in A$ such that $\{v\}\cup
S\in \HH$.
 We call an edge $xyz\in \HH$ $\alpha$-{\it rich} with respect to
an optimal partition $(X,Y)$ of $\HH$ if $x\in X$, $y,z\in Y$ and
$\max\{|L_{X}(x,y)|, |L_X(x,z)|\}> \alpha n$. The vertex $z$ is the
{\it poor} vertex of a rich edge if $|L_{X}(x,y)|\ge  |L_X(x,z)|$;
in case of a tie we can decide arbitrarily.

\begin{lemma}\label{richedges}
Let  $(X,Y)$ be an optimal ordered partition  of an $\HH\in Forb(n,T_5,\eta,\mu)$.
For  $\alpha\ge 200\mu$  the following holds:\\
(i) The number of distinct poor vertices of the $\alpha$-rich edges of $\HH$ is at most $2\mu n$.\\
(ii) For any vertex $x\in X$ the number of $\alpha$-rich edges
containing $x$ is at most $2\mu n^2$.
\end{lemma}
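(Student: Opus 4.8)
The plan is to prove both parts by exhibiting a forbidden $T_5$ whenever the stated bounds fail, using property (iv) of $\mu$-lower-density for part (i) and property (iii) for part (ii). For part (i): let $Y'\subset Y$ be the set of poor vertices of $\alpha$-rich edges and suppose for contradiction that $|Y'|\ge 2\mu n$. For each $y\in Y'$, fix an $\alpha$-rich edge $xyz$ whose poor vertex is $y$ (so $x\in X$, $y,z\in Y$, and $|L_X(x,z)|\ge|L_X(x,y)|\ge\alpha n$ is not the right choice — rather $z$ being poor means $|L_X(x,y)|\ge|L_X(x,z)|$, hence $|L_X(x,y)|>\alpha n$). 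Actually it is cleaner to record for each $y\in Y'$ a pair: a vertex $x=x(y)\in X$ and a set $X_y:=L_X(x(y),\,y')>\alpha n$ for the non-poor $Y$-vertex $y'$ of that rich edge, together with the fact that $x(y)y y'\in\HH$. Then apply property (iv) with this family $\{X_y\}_{y\in Y'}$ (noting $\alpha\ge200\mu$ so $|X_y|>200\mu n$): we obtain more than $10000\mu^3n^3$ edges $E\in\HH$ with some $y\in Y'$, $y\in E$ and $|E\cap X_y|=2$. The point is that each such edge, say $E=\{y,u,v\}$ with $u,v\in X_{y}=L_X(x(y),y')$, means $x(y)y'u, x(y)y'v\in\HH$; together with $\{y,u,v\}$, the edge $x(y)yy'$, and (we need) a fifth edge forming $T_5$. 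Here I should instead take $E$ to contain $x(y)$ directly, or reorganize: the standard move is that $\{x(y)y'u, x(y)y'v, x(y)yy', yuv\}$ already contains $T_5$ once we check the configuration (four triples on five vertices $\{x(y),y',y,u,v\}$ with the right intersection pattern). One edge suffices to produce a contradiction, so even a single $E$ from property (iv) finishes part (i); the threshold $10000\mu^3n^3>0$ is all that is used.

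**For part (ii):** fix $x\in X$ and suppose it lies in more than $2\mu n^2$ $\alpha$-rich edges. Each such edge is $xyz$ with $y,z\in Y$, so the $\alpha$-rich edges at $x$ form a graph $R_x\subset\binom{Y}{2}$ with $|R_x|>2\mu n^2$. For each edge $yz\in R_x$ we have, by definition, $\max\{|L_X(x,y)|,|L_X(x,z)|\}>\alpha n$. I will use Lemma~\ref{matching} to extract from $R_x$ a matching $M$ of size at least $\mu n$; for each $yz\in M$, after relabeling assume $|L_X(x,y)|>\alpha n\ge 2\mu n$. Now the sets $\{L_X(x,y): yz\in M\}$ are large subsets of $X$, and I want a $T_5$. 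Consider the link of $x$ on $X\times Y$, i.e. $L_{X,Y}(x)$: by Lemma~\ref{badvertices}(ii) we cannot bound this, which is exactly the difficulty flagged in the remark. So instead I pair the large sets $L_X(x,y)$ against each other across the matching: pick distinct matching edges $y_1z_1,y_2z_2\in M$; then $|L_X(x,y_1)|,|L_X(x,y_2)|>\alpha n$, and since both live in $X$ which has size $(2/3\pm\mu)n$, they overlap substantially — but overlap alone does not give an edge of $\HH$. The right tool is property (iii) of $\mu$-lower-density (the $|E\cap A_X|=2,|E\cap A_Y|=1$ statement): take $A_X$ to be a union of two such large link-sets and $A_Y$ a set of matched $Y$-vertices, producing many edges $\{u,v,w\}$ with $u,v\in X$, $w\in Y$; combined with $xuw$-type edges coming from the rich structure and the matching edge $wz_w$, we assemble $T_5$.

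**Main obstacle.** The genuinely delicate point is part (ii), and specifically the bookkeeping of which vertex plays which role in the $T_5$, because — as the authors' own remark stresses — there is no upper bound on $|L_{X,Y}(x)|$, so one cannot simply say "$x$ has a small link on $X\times Y$ and derive a contradiction." One must use the richness hypothesis to manufacture, for matching edges $yz$, vertices $u\in L_X(x,y)$ with $xyu\in\HH$, and then get $\HH$-edges entirely inside $X\cup\{w\}$ from property (iii) applied to cleverly chosen $A_X,A_Y$; threading these together into the precise $T_5=\{123,124,125,345\}$ pattern (with the shared pair $\{1,2\}$ sitting on an $X$-vertex and a $Y$-vertex in some places and on two $X$-vertices in others) requires care that the four chosen triples are genuinely a $T_5$ and not some degenerate or repeated configuration. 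The numerical slack ($\alpha\ge200\mu$, the constants $72,8,8,10000$ in Definition~\ref{lowdef}, and the matching size $\ge\mu n$ from Lemma~\ref{matching}) is comfortable, so once the combinatorial configuration is pinned down the counting is routine; I expect part (i) to be short (one invocation of property (iv)) and part (ii) to carry the weight of the argument.
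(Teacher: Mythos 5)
Your part (i) is essentially the paper's argument: record for each poor vertex $z_i$ a rich edge $x_iy_iz_i$, set $X_{z_i}=L_X(x_i,y_i)$ (size $>\alpha n\ge 200\mu n$ since $y_i$ is not poor), invoke property (iv) to get some $a,b\in X_{z_i}$ with $abz_i\in\HH$, and observe that $\{x_iy_iz_i,\,x_iy_ia,\,x_iy_ib,\,abz_i\}$ is a $T_5$. Your identification of the $T_5$ pattern on $\{x_i,y_i,z_i,a,b\}$ (with the shared pair being $\{x_i,y_i\}$) is correct, and as you note a single output of property (iv) suffices. So part (i) is fine.

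Part (ii), however, goes off in the wrong direction: it does not require any new application of the $\mu$-lower-density conditions, any matching extraction, or any fresh $T_5$-hunting. It is a direct pigeonhole consequence of part (i). Fix $x\in X$. Every $\alpha$-rich edge containing $x$ has the form $xyz$ with $y,z\in Y$ and has a poor vertex in $Y$. For a fixed $z\in Y$, the number of $\alpha$-rich edges containing both $x$ and $z$ is at most $n$ (the third vertex determines the edge). So if $x$ lay in more than $2\mu n^2$ $\alpha$-rich edges, the poor vertices of those edges would be more than $2\mu n$ distinct vertices of $Y$, contradicting part (i). Your proposal inverts the difficulty: you flag (i) as the short step and (ii) as carrying the weight, when in fact (i) is where property (iv) does the work and (ii) is one sentence of double counting. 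Moreover the sketch you give for (ii) — extracting a matching from $R_x$, pairing the large sets $L_X(x,y)$ against each other, and trying to build a $T_5$ via property (iii) — is not completed, and as you yourself note the bookkeeping of which vertex sits where in $T_5$ is unresolved; you should abandon that route and use the counting reduction above.
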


\begin{proof}
(i) Assume not, i.e., let $\{x_iy_iz_i\}$ be $\alpha$-rich
edges for $i\in [\lceil 2\mu n\rceil ]$, where $x_i\in X$ and
 $x_iy_iz_i$ has poor vertex $z_i$ and the $z_i$'s are
different vertices. Let $Y'=\{z_1,\ldots,z_{\lceil 2\mu
n\rceil}\}$ and $X_{z_i}=L_X(x_i,y_i)$. As $y_i$ is not the poor
vertex of the rich edge $x_iy_iz_i$, we have $|X_{z_i}|>200\mu n$.
By condition (iv) of the definition of $\mu$-lower-density  there is
an $i$ such that for some $a,b\in L_X(x_i,y_i), \ abz_i\in \HH$. But
then $x_iy_iz_i, x_iy_ia, x_iy_ib, abz_i$ form a $T_5$ in $\HH$,  a
contradiction.

(ii) The number of rich edges containing a vertex $z\in Y$ and $x$ is at most $n$, hence if (ii) was false,
then there would be at least $2\mu n$ poor vertices in $Y$, contradicting (i).
\end{proof}

\subsection{Estimates on $S(n)$}\label{estsn}

In this section we give some estimates on $S(n)$.

\begin{lemma}\label{mon}
(i)
$$\log_2 (S(n))\ge \frac{2}{27} n^3-\frac{1}{9} n^2-\frac{1}{9} n.$$
(ii) For $n$ large enough:
 $$S(n)\ge  S(n-1)\cdot 2^{(2n^2-5n+1)/9}\ge  S(n-2)\cdot
2^{(4n^2-14n+9)/9} \ge  S(n-3)\cdot 2^{(6n^2-27n+28)/9}.$$
\end{lemma}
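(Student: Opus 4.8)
The plan is to recall that $B^3(n)$ is the densest semi-bipartite $3$-graph on $n$ vertices, that $b^3(n) = |B^3(n)| = \max_a \binom{a}{2}(n-a)$, and that this maximum is attained at $a = a(n)$ with $a = \lfloor 2n/3\rfloor$ or $\lceil 2n/3\rceil$, so that $a(n) = 2n/3 + O(1)$. Since any subgraph of a semi-bipartite $3$-graph is semi-bipartite, we have $S(n) \ge 2^{b^3(n)}$, and hence $\log_2 S(n) \ge b^3(n) = \binom{a}{2}(n-a)$. For part (i), I would just plug in $a = \lfloor 2n/3 \rfloor$ (splitting into the residue classes $n \equiv 0,1,2 \pmod 3$, or more cleanly bounding $\binom{a}{2}(n-a) \ge \binom{2n/3}{2}\cdot n/3$-type expressions from below) and verify by a direct but routine computation that $\binom{a}{2}(n-a) \ge \frac{2}{27}n^3 - \frac19 n^2 - \frac19 n$; the right-hand side is chosen precisely so that the inequality survives in all three residue classes.

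For part (ii), the idea is to build a semi-bipartite $3$-graph on $n$ vertices out of an optimal one on $n-1$ vertices by adding one new vertex $v$ to the $X$-side and then counting how many consistent edges through $v$ are available: if $B^3(n-1)$ has $X$-side of size $a(n-1)$, then placing $v$ in $X$ creates $\binom{a(n-1)}{2}$ — no wait, the new consistent edges through $v$ are those with one more $X$-vertex and one $Y$-vertex, i.e. $a(n-1)\cdot (n-1-a(n-1))$ of them, plus possibly we also want to recount; the cleanest route is $S(n) \ge S(n-1)\cdot 2^{(\text{number of triples containing } v \text{ consistent with the fixed partition})}$. Since each such $\HH$ on $[n-1]$ extends independently in every subset of these new slots, $S(n) \ge S(n-1)\cdot 2^{b^3(n) - b^3(n-1)}$, and then it suffices to show $b^3(n) - b^3(n-1) \ge (2n^2 - 5n + 1)/9$. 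Iterating this (or applying it three times and summing the exponents) gives the chain down to $S(n-2)$ and $S(n-3)$. I would verify the bound $b^3(n) - b^3(n-1) \ge (2n^2-5n+1)/9$ by writing $b^3(n) = \binom{a(n)}{2}(n - a(n))$, using $a(n) = 2n/3 + O(1)$, and checking the three residue classes mod $3$ separately; the telescoped bounds $S(n-2)\cdot 2^{(4n^2-14n+9)/9}$ and $S(n-3) \cdot 2^{(6n^2-27n+28)/9}$ then follow by adding $(2n^2-5n+1)/9 + (2(n-1)^2 - 5(n-1)+1)/9$ and so on.

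The main obstacle is purely bookkeeping: getting the constants in the lower-order terms exactly right across the three residue classes $n \equiv 0, 1, 2 \pmod 3$, since the optimal split size $a(n)$ jumps between $\lfloor 2n/3\rfloor$ and $\lceil 2n/3\rceil$ and this shifts $b^3(n)$ by a quadratic-in-$n$ amount that must be tracked carefully to land on the stated polynomials rather than something slightly weaker. There is no conceptual difficulty — one only needs the trivial facts that subgraphs of semi-bipartite hypergraphs are semi-bipartite and that extending a fixed optimal partition by one vertex multiplies the count of semi-bipartite hypergraphs by $2$ raised to the number of newly-available consistent triples — so the whole proof is a careful finite computation, and the ``for $n$ large enough'' hypothesis in (ii) absorbs any small-$n$ irregularities in where the maximum of $\binom{a}{2}(n-a)$ is attained.
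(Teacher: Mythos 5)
Part (i) of your plan is essentially the paper's argument: fix a partition of $[n]$ into classes of sizes $\lceil 2n/3\rceil$ and $\lfloor n/3\rfloor$ and observe that every subset of the consistent triples is semi-bipartite, giving $\log_2 S(n)\ge b^3(n)$, followed by a residue-class computation.

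For part (ii), however, there is a genuine gap. You assert
$S(n)\ge S(n-1)\cdot 2^{\,b^3(n)-b^3(n-1)}$
on the grounds that ``each such $\HH$ on $[n-1]$ extends independently in every subset of these new slots.'' But the number of consistent triples through the new vertex depends on the sizes of the two sides of whatever valid bipartition $\HH$ happens to have, not on the optimal split sizes $a(n-1),a(n)$. An $\HH\in\SS(n-1)$ whose \emph{only} valid bipartitions have $|X|$ well below $2(n-1)/3$ admits far fewer than $b^3(n)-b^3(n-1)\approx 2n^2/9$ new consistent triples (for instance, if $|X|\le n/100$ the number of new slots is $O(n^2/100)$), so the fiber over such an $\HH$ under the restriction map $\SS(n)\to\SS(n-1)$, $\HH'\mapsto\HH'-n$, is too small, and your ``extends independently in every subset'' claim fails for these $\HH$. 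This is precisely the case the paper has to deal with separately: it shows that the number of semi-bipartite $3$-graphs on $[n-1]$ admitting only unbalanced partitions (say $|Y|\ge 2n/3$) is at most $2^{n+n^3/27}$, which is exponentially negligible against $S(n-1)\approx 2^{2n^3/27}$ by part (i), and then writes $S(n)\ge\bigl(S(n-1)-2^{n+n^3/27}\bigr)\cdot 2^{(2n^2-5n+2)/9}$. The loss of $1/9$ in the exponent (from $+2$ down to the stated $+1$) is exactly the slack used to absorb this negligible term. Your write-up should either reproduce this ``discard the unbalanced $3$-graphs'' step, or otherwise justify why the vast majority of $\HH\in\SS(n-1)$ admit at least $2^{(2n^2-5n+2)/9}$ extensions; without it the passage from a per-partition count to an inequality against $S(n-1)$ itself is unjustified. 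The telescoping to $S(n-2)$ and $S(n-3)$ is then routine, as you say.
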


\begin{proof}
(i) We generate many semi-bipartite
3-graphs as follows: Partition $[n]$ into classes of sizes
$t=\lceil 2n/3\rceil$ and $n-t=\lfloor n/3\rfloor$, and add any
collection of consistent edges. A short calculation shows that
$${t \choose 2}(n-t)\ge \frac{2}{27} n^3-\frac{1}{9} n^2-\frac{1}{9} n$$
and the result follows.

(ii) It is sufficient to prove the first inequality. Given a
semi-bipartite 3-graph on $[n-1]$ with partition $(X,Y)$, add $n$
to $Y$ if $|Y|<n/3$ otherwise to $X$, and decide about each
consistent edge containing $n$ to be added to the 3-graph or not.
If $|Y|<2n/3$ then careful calculation shows that for a given
partition there are at least $2^{(2n^2-5n+2)/9}$ ways to add
consistent edges containing $n$. However, if $|Y|\ge 2n/3$ then
 we do not generate too many 3-graphs, indeed in this case the number of possible consistent edges is at most
 $${|X| \choose 2}|Y| \le {n/3 \choose 2}\frac{2n}{3}\le \frac{n^3}{27}.$$ Consequently, the number of semi-bipartite
  $3$-graphs with vertex set $[n-1]$ and $|Y| \ge 2n/3$ is at most $2^{n+n^3/27}<S(n-1)\cdot (1-2^{-1/9})$ for $n$ large enough
  by part (i).
 Therefore
$$S(n)>(S(n-1)-2^{n+n^3/27})2^{(2n^2-5n+2)/9}>S(n-1)\cdot
2^{(2n^2-5n+1)/9}.$$
\end{proof}

\subsection{Getting rid of the inconsistent edges}\label{secinconedges}

In this section  we estimate the number of $3$-graphs $\HH$ from
$Forb(n,T_5,\eta,\mu)$ which violate one of the conditions below.
Note that if an $\HH$ does not violate any of the conditions below
then $\HH\in \S(n)$.

(1) In every optimal partition  $(X,Y)$ of $\HH$ and for every
$x\in X$ we have $|L_{Y,Y}(x)|< \beta n^2$.\\
(2)  In every optimal partition  $(X,Y)$ of $\HH$ every $y\in Y$
satisfies $|L_{Y,Y}(y)|<2\mu n^2.$   \\
 (3) No  optimal partition  $(X,Y)$ of $\HH$ contains an $\alpha$-rich
edge.\\
(4) No optimal partition $(X,Y)$ of $\HH$ has an inconsistent edge
 $xyz$ with $|\{x,y,z\} \cap X|\in\{0,3\}$.\\
(5) No optimal partition $(X,Y)$ of $\HH$ has an inconsistent edge
 $xyz$ with $|\{x,y,z\} \cap X|=1$.
\\

Our goal is to prove the following result, which will be completed in the next section.

\begin{theorem}\label{badtriple}
There is a $C_1$ such that the number of $\HH\in
 Forb(n,T_5,\eta,\mu)$ not satisfying any of the conditions (1)-(5) is at most  $C_1\cdot
 2^{-n/10}S(n)$.
\end{theorem}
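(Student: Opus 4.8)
The plan is to bound, for each of the five conditions $(i)\in\{1,\dots,5\}$ separately, the number of $\HH\in Forb(n,T_5,\eta,\mu)$ that violate $(i)$ but satisfy $(1),\dots,(i-1)$, and then sum these five bounds. In each case the basic counting scheme is the same as in the proof of Lemma~\ref{t5lower}: choose the optimal ordered partition $(X,Y)$ in at most $2^n$ ways; by condition~(v) of $\mu$-lower-density we may take $|Y|=(1/3\pm\mu)n$; the consistent edges number at most $\binom{|X|}{2}|Y|\le (2/27)n^3$, contributing at most $2^{2n^3/27}$ choices; and the inconsistent edges number at most $\eta n^3$, contributing at most $2^{H(\eta)n^3}$. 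Since $1000H(\eta)<\mu^3\ll 1$, the raw count $2^{n+2n^3/27+H(\eta)n^3}$ is already close to $S(n)$ up to the $2^{2n^3/27}$-versus-$S(n)$ discrepancy, which by Lemma~\ref{mon}(i) is only $2^{O(n^2)}$. So everything hinges on extracting, from the \emph{violation} of condition $(i)$, a saving of the form $2^{-\Omega(n^3)}$ in the number of ways to place either the consistent or the inconsistent edges — a saving that beats the $2^{n+H(\eta)n^3}$ overhead and leaves the required $2^{-n/10}$ factor; we get the extra $S(n)$ by invoking Lemma~\ref{mon}.

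For conditions~(1) and~(2), the saving comes from the consistent edges via Lemmas~\ref{badvertices} and~\ref{richedges} together with the $\mu$-lower-density properties (iii), (iv): if some $x\in X$ has $|L_{Y,Y}(x)|\ge \beta n^2$, then (since $\HH$ has independent neighborhoods, i.e.\ is $T_5$-free) the link $L_{Y,Y}(x)$ forbids many consistent edges — for instance, using property (iii) one shows that a $\beta$-fraction of the $\binom{|X|}{2}|Y|$ consistent triples on a suitable sub-structure cannot all be present, costing a factor $2^{-\Omega(\beta n^3)}$; the inequality $\beta^2>100(H(\beta)+H(2\mu)+\mu^2)$ (more precisely the weaker $\beta>100H(2\mu)$, and $H(\eta)\ll\mu^3\ll\beta$) ensures this beats the overhead. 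Conditions~(3),(4),(5) are about inconsistent edges, and here the saving is in the $2^{H(\eta)n^3}$ term: an $\alpha$-rich edge $xyz$ (case~3), or an inconsistent edge with $|\{x,y,z\}\cap X|\in\{0,3\}$ (case~4), or with $|\{x,y,z\}\cap X|=1$ (case~5), forces (via the Embedding-type / $T_5$-free arguments already used for Lemma~\ref{badvertices} and in Section~5) that many \emph{other} potential inconsistent edges are absent, so that the set of inconsistent edges is not an arbitrary subset of an $\eta n^3$-set but is confined to a much sparser structure, giving $2^{-\Omega(\alpha^2 n^3)}$ or similar; the hierarchy $\alpha^2>100(H(\beta)+H(2\mu)+\mu^2)$ and $0.01>H(\alpha)$ is exactly what makes each of these savings dominate.

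In more detail, for case~(4) one argues: if $xyz$ with $x,y,z\in X$ (say) is inconsistent in every optimal partition, then moving one of $x,y,z$ to $Y$ cannot decrease the count of inconsistent edges, which by a standard local argument forces $\Omega(n^2)$ inconsistent edges incident to $\{x,y,z\}$, and then summing over the $\alpha n^3$ (say) such "bad configurations" — or rather, setting up the count so that the inconsistent edges must avoid a positive-density set — yields the $2^{-\Omega(n^3)}$ factor; case~(5) and case~(0/3-analysis) are handled the same way using the $T_5$-free property to rule out configurations. For case~(3), Lemma~\ref{richedges}(i) already tells us the poor vertices number at most $2\mu n$, so an $\alpha$-rich edge $xyz$ with $\max\{|L_X(x,y)|,|L_X(x,z)|\}>\alpha n$ together with $T_5$-freeness forces $L_X(x,y)$ (of size $>\alpha n$) to be "independent" over the pair $\{x, z\}$ in a way that kills $\Omega(\alpha n^3)$ consistent edges, the required saving.

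The main obstacle I expect is case~(3) (rich edges) combined with the asymmetry flagged in the remark after Lemma~\ref{badvertices}: we have no upper bound on $|L_{X,Y}(x)|$ for $x\in X$, so a vertex of $X$ can have a huge link into $X\times Y$, and the rich-edge phenomenon is precisely what can happen without creating a $T_5$ or being sub-optimal. Turning "$\HH$ has a rich edge in every optimal partition" into a genuine $2^{-\Omega(n^3)}$ saving — rather than merely the $2^{-\Omega(n^2)}$ one gets from bounding the number of rich edges — requires carefully coupling the rich-edge structure with the $\mu$-lower-density property~(iv) to show that the presence of one rich edge propagates to forbid a density-$\Omega(\alpha^2)$ set of consistent triples. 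The bookkeeping ensuring the five $2^{-\Omega(n^3)}$ savings each individually dominate $2^{n+H(\eta)n^3}\cdot 2^{O(n^2)}$, and that their sum is at most $C_1 2^{-n/10}S(n)$ via Lemma~\ref{mon}(i), is routine given the parameter hierarchy \eqref{parameters}.
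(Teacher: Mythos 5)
Your plan does not work as stated, and the gap is structural. You propose that for each condition $i$ the raw count is $2^{n+2n^3/27+H(\eta)n^3}$ and that violating condition $i$ will yield a saving of order $2^{-\Omega(n^3)}$, which beats the $2^{H(\eta)n^3}$ overhead. But conditions (1)--(4) concern a \emph{single} bad vertex $x$ (or a single bad edge $\{x,y,z\}$). All the $T_5$-freeness and $\mu$-lower-density arguments you gesture at constrain only the $O(n^2)$ triples incident to that vertex or edge, so the best saving available is $2^{-\Omega(n^2)}$ (e.g.\ $2^{-\beta n^2/5}$ for (1), $2^{-\alpha^2 n^2/3}$ for (3), a factor $(7/8)^{|X|^2/2}$ for (4)), never $2^{-\Omega(n^3)}$. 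An $O(n^2)$ saving cannot absorb the $2^{H(\eta)n^3}$ slack, so your direct count is off by an unbounded factor. The paper's way around this is an induction on $n$ that you have not built in: after isolating $x$, the rest of the 3-graph $\HH-x$ is counted as a member of $Forb(n-1,T_5)$, bounded by Theorem~\ref{maint}$(n-1)$ as roughly $S(n-1)$, and Lemma~\ref{mon}(ii) converts $S(n-1)$ into $S(n)\cdot 2^{-(2n^2-5n+1)/9}$; the $O(n^2)$ local saving on edges through $x$ is then exactly at the right scale. This interlocking induction (Theorem~\ref{maint}$(n-1)\Rightarrow$ Theorem~\ref{badtriple}$(n)\Rightarrow$ Theorem~\ref{clean}$(n)\Rightarrow$ Theorem~\ref{maint}$(n)$) is the backbone of the whole section, and it is absent from your proposal.

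Condition (5) is an additional, separate problem: here a $2^{-\Omega(n^3)}$ saving is provably false in the hardest subcase. If $\HH$ has, say, exactly one inconsistent edge of type (5), the count of such $\HH$ is within an absolute constant of $S(n)$ times the number of choices of that one edge; only a $2^{-\Omega(n)}$ factor can be extracted. The paper splits (5) into two regimes. When the shadow graph of inconsistent edges has at least $100\alpha n^2$ edges (Lemma~\ref{clean2}), a direct count does give $2^{-\alpha n^3}S(n)$. In the sparse regime the paper uses the auxiliary bipartite graph $B_i$ between the class $\C_i$ and $\SS(n)$ (Lemma~\ref{bipcom}), where a degree comparison (each bad $\HH$ has $B_i$-degree $\gtrsim 2^{(|X|-1)i/2}$ while each semi-bipartite $\HH$ has degree $\lesssim 2^{(|Y|/2+o(n))i}$) yields $|\C_i|/S(n)\le 2^{-in/9}$, and summing over $i$ gives $2^{-n/10}S(n)$ --- the bottleneck in the final bound. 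Your proposal does not anticipate either the dichotomy or the bipartite-degree argument. Your flagged worry about case (3) is fair, but case (5) and the missing induction are the decisive gaps.
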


Before proceeding we state and prove the following lemma.
For integers $a<b$, let $[a,b]=\{a, a+1, \ldots, b\}$.

\begin{lemma}\label{matchcount}
Fix a matching $M$ with $m$ edges, say $\{1,2\},\ldots, \{2m-1, 2m\}$. The
number of graphs on $[N]$,  where $M$ is a maximum matching is less
than $$2^{2m^2-2m} (N-2m + 2^{N-2m+1})^{m}.$$
\end{lemma}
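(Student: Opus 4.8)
\textbf{Proof plan for Lemma~\ref{matchcount}.}
The plan is to count graphs $G$ on vertex set $[N]$ for which the fixed matching $M=\{\{1,2\},\ldots,\{2m-1,2m\}\}$ is a \emph{maximum} matching, by splitting each edge of $G$ according to how it meets $V(M)=[2m]$. Write $U=[2m]$ and $W=[2m+1,N]$, so $|W|=N-2m$. Every edge of $G$ lies in one of three categories: (a) both endpoints in $U$; (b) one endpoint in $U$ and one in $W$; (c) both endpoints in $W$. I will bound the number of choices in each category separately and multiply, using crucially the constraint that $M$ is maximum.

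First, for category (a): there are $\binom{2m}{2}<2m^2-2m$... more precisely $\binom{2m}{2}=2m^2-m$ potential edges inside $U$, but I only need the crude bound that the number of subsets of $\binom{U}{2}$ is at most $2^{\binom{2m}{2}}\le 2^{2m^2-2m}$ once one discards, say, the $m$ edges of $M$ themselves whose presence is irrelevant to the count of \emph{other} edges (or simply absorb the slack). This gives the factor $2^{2m^2-2m}$. The real content is in categories (b) and (c), where maximality of $M$ forces structure. The key observation is: if $M$ is a maximum matching then there is \emph{no augmenting path of length $3$}, i.e.\ there do not exist distinct $w,w'\in W$ and an edge $\{2i-1,2i\}\in M$ with $\{w,2i-1\},\{w',2i\}\in G$. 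Equivalently, for each matched pair $\{2i-1,2i\}$, the set of $W$-neighbours of $2i-1$ together with the set of $W$-neighbours of $2i$ cannot both be nonempty unless $\ldots$ — more carefully, at most one of the two endpoints $2i-1,2i$ may have \emph{any} neighbour in the set of $W$-vertices that are unmatched by $G$ to $U$; and no two vertices of $W$ can simultaneously witness an augmenting path through the same $M$-edge. I would package this by the following counting: for each $i\in[m]$, the pair of neighbourhoods $(N_W(2i-1),N_W(2i))$ must be such that one cannot pick distinct $w\in N_W(2i-1)$, $w'\in N_W(2i)$ — equivalently, either $N_W(2i-1)=\emptyset$, or $N_W(2i)=\emptyset$, or $N_W(2i-1)=N_W(2i)=\{w\}$ is a single common vertex. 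Hence the number of choices for the pair $(N_W(2i-1),N_W(2i))$ is at most $2^{N-2m}+2^{N-2m}+(N-2m)\le N-2m+2^{N-2m+1}$, and since the $m$ $M$-edges contribute independently in this respect, category (b) contributes at most $(N-2m+2^{N-2m+1})^m$.

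Finally, for category (c): I claim that if $M$ is maximum then $G[W]$ has \emph{no edge at all}. Indeed an edge $\{w,w'\}$ with $w,w'\in W$ would extend $M$ to a larger matching (all of $W$ is unmatched by $M$), contradicting maximality. So category (c) contributes a factor of $1$. Multiplying the three factors gives exactly $2^{2m^2-2m}(N-2m+2^{N-2m+1})^m$, as claimed; the strict inequality comes from the slack in the category (a) estimate (e.g.\ $\binom{2m}{2}<2m^2-2m$ fails for small $m$, so in fact I should route the bound through the fact that the $m$ matching edges are present/absent freely while the remaining $\binom{2m}{2}-m=2m^2-2m$ pairs inside $U$ are chosen arbitrarily — i.e.\ fix the $M$-edges to be in $G$, which is forced-up-to-a-factor, and bound the rest by $2^{2m^2-2m}$).

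\textbf{Main obstacle.} The delicate point is category (b): getting the per-$M$-edge bound right requires being careful that the ``augmenting path of length $3$'' obstruction really does reduce the admissible pairs $(N_W(2i-1),N_W(2i))$ to (one side empty) or (both sides equal to a common singleton), and that these constraints for different $i\in[m]$ are genuinely independent so they multiply. One must also double-check there is no double counting between a $W$-vertex being ``a common singleton neighbour of some $M$-edge'' and ``having neighbours in $U$ through several $M$-edges'' — but since the neighbourhood of each $U$-vertex is being specified and the $M$-edges partition $U$, the choices for $(N_W(2i-1),N_W(2i))$ over $i\in[m]$ are independent, so the product bound is valid. The remaining estimates (categories (a) and (c)) are essentially immediate from maximality.
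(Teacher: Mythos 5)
Your proof is correct and follows essentially the same route as the paper's: decompose the potential edges by position relative to $V(M)=[2m]$, note that maximality forbids edges inside $[2m+1,N]$ and forbids, for each matching edge $\{2i-1,2i\}$, distinct $W$-neighbours on its two endpoints (the length-3 augmenting path), then bound the choices per matching edge by $N-2m+2^{N-2m+1}$ and the inside-$[2m]$ choices by $2^{\binom{2m}{2}-m}=2^{2m^2-2m}$. The one small slip is your opening claim $\binom{2m}{2}<2m^2-2m$ (false, as $\binom{2m}{2}=2m^2-m$); you catch it yourself a few lines later by noting the $m$ matching edges are forced to lie in $G$, which is the correct way to arrive at the exponent $2m^2-2m$.
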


\begin{proof} We allow complete freedom to include edges on $[2m]$ yielding $2^{\binom{2m}{2}-m}=
2^{2m^2-2m}$ ways to choose these edges.
There is no edge inside $[2m+1,N]$  by the maximality of $M$. Consider an edge $\{2i-1, 2i\} \in M$.  If for  $j_1,j_2\in [2m+1,N]$ both
$\{j_1,2i-1\}$ and $\{j_2, 2i\}$ are edges then again by maximality of $M$, we have $j_1=j_2$.  So either there is a vertex in $[2m+1, N]$
 with
edges to both  $2i-1$ and $2i$, or one of $2i-1$ or $2i$ has no edge to any vertex
 in $[2m+1,N]$. For each $i$ we obtain $N-2m + 2^{N-2m+1}$  possibilities for the set of edges incident to $\{2i-1,2i\}$,
  thereby completing the proof.
\end{proof}

In the next five subsections, we will let $n$ be sufficiently large as needed.

\subsubsection{3-graphs  violating
(1)}  In this section we prove the following Lemma.

\begin{lemma} \label{1}
The number of  $\HH \in Forb(n,T_5,\eta,\mu)$ violating condition (1)
is at most
$$|Forb(n-1,T_5)|2^{\frac{2n^2}{9} - \frac{\beta n^2}{5}}.$$
\end{lemma}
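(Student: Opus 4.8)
The plan is to encode each $\HH \in Forb(n,T_5,\eta,\mu)$ violating (1) by first deleting a well-chosen vertex and then recording the (few) remaining pieces of data needed to reconstruct $\HH$. Concretely, suppose $\HH$ has an optimal partition $(X,Y)$ and a vertex $x \in X$ with $|L_{Y,Y}(x)| \ge \beta n^2$. First I would fix such a partition and such a vertex $x$ (there are at most $2^n$ choices of ordered partition and at most $n$ choices of $x$, contributing a negligible $2^{O(n)}$ factor). Delete $x$: the $3$-graph $\HH - x$ on the remaining $n-1$ vertices is still $T_5$-free, so it is one of at most $|Forb(n-1,T_5)|$ possibilities. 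To recover $\HH$ from $\HH - x$ and the partition we must specify the link of $x$, i.e. a graph $L(x) \subset \binom{[n]-x}{2}$; naively this costs $2^{\binom{n-1}{2}}$, which is far too much, so the whole point is to show the structural constraints force $L(x)$ to come from a much smaller family.

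The key structural input is Lemma \ref{badvertices}(i), which gives $|L_{X,X}(x)| \le 2\mu n^2$, together with the $T_5$-freeness interacting with the large set $L_{Y,Y}(x)$. The idea is: the pairs in $Y$ that lie in $L(x)$ form a graph $L_{Y,Y}(x)$ on $Y$ with at least $\beta n^2$ edges; I claim this graph cannot be too rich in edges attached to vertices that also send many edges of $L_{X,Y}(x)$, else one builds a $T_5$ using $x$ together with property (i) or (ii) of $\mu$-lower-density (as in the proofs of Lemmas \ref{badvertices} and \ref{richedges}). More precisely, one extracts a large matching inside $L_{Y,Y}(x)$ via Lemma \ref{matching} — of size at least $\beta n / 2$ — and argues, using a lower-density property of $\HH$, that the link of $x$ on $X$ relative to this matching must be severely restricted (otherwise $x$ together with a matching edge $uv$ in $Y$ and a pair $ab$ in $X$ with $auv, buv \in \HH$ and $abx \in \HH$ yields a copy of $T_5$). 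Counting the number of ways to place the edges of $L(x)$ then reduces, via an argument in the spirit of Lemma \ref{matchcount}, to: $2^{\binom{|Y|}{2}}$ for the $Y$–$Y$ part is replaced by something of size $2^{\binom{|Y|}{2} - c\beta n^2}$ because the matching structure forbids many configurations, while the $X$-part contributes at most $2^{2\mu n^2} = 2^{o(\beta n^2)}$ by Lemma \ref{badvertices}(i). Writing $\binom{|X|}{2} + \binom{|Y|}{2} \le \binom{n-1}{2} \approx n^2/2$ won't do — one instead uses $|Y| = (1/3 \pm \mu)n$ from lower-density (v), so $\binom{|Y|}{2} + |X||Y| \le (2/9 + O(\mu))n^2$, and the savings of order $\beta n^2$ over this gives exactly the exponent $\tfrac{2n^2}{9} - \tfrac{\beta n^2}{5}$ once the constants are chased (using $\beta \gg \mu \gg \eta$ and $\beta > 100 H(2\mu)$ to absorb the $2^{H(\eta)n^3}$-type and matching-enumeration terms — here the savings is genuinely of order $n^2$, not $n^3$, so these entropy terms involving $n^3$ must NOT appear, which means the reconstruction of $\HH - x$ is accounted for wholesale by $|Forb(n-1,T_5)|$ rather than re-counted).

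The main obstacle I anticipate is getting the constant in the exponent to be as good as $\beta/5$: one must show that the "forbidden" configurations genuinely cost a $(\beta/5 + o(1))n^2$ factor in the number of ways to choose $L(x)$, and this requires a clean combinatorial lemma about graphs on $Y$ containing a prescribed large matching while avoiding certain augmenting-type patterns — essentially a variant of Lemma \ref{matchcount} tuned so that each of the $\approx \beta n/2$ matching edges contributes a bounded number of incidence patterns to $X$ rather than the full $2^{|X|}$. I would isolate that counting statement as the crux, prove it by the same vertex-by-vertex argument used in Lemma \ref{matchcount}, and then the rest is bookkeeping with the constant hierarchy in \eqref{parameters}.
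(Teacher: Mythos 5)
Your overall template is the one the paper uses: fix the partition and the bad vertex $x\in X$ with $|L_{Y,Y}(x)|\ge\beta n^2$, charge $\HH-x$ to $|Forb(n-1,T_5)|$, control $L_{X,X}(x)$ via Lemma~\ref{badvertices}(i), and extract a matching of size $\approx\beta n/2$ inside $L_{Y,Y}(x)$. But two of the load-bearing steps are misidentified.

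First, the $T_5$ you invoke to restrict ``the link of $x$ on $X$ relative to the matching'' is $\{auv,buv,xuv,abx\}$, which requires $abx\in\HH$, i.e.\ $ab\in L_{X,X}(x)$. That only constrains pairs inside $L_{X,X}(x)$, which is already tiny by Lemma~\ref{badvertices}(i), so it yields no new savings on the part of the link you actually need to shrink, namely $L_{X,Y}(x)$ restricted to the matching vertices. The paper's mechanism is different and essential: one first discards the set $B$ of at most $2\mu n$ poor vertices of $\alpha$-rich edges through $x$ (Lemma~\ref{richedges}(i)), takes the maximum matching $M$ inside $\binom{Y-B}{2}$, and observes that for $yz\in M$ the edge $xyz$ cannot be $\alpha$-rich, hence $|L_X(x,y)|\le\alpha n$ for every matching vertex $y$. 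It is this degree cap, applied to each of the $2m$ vertices of $A=V(M)$, that replaces the trivial $2^{2m|X|}$ by $\binom{|X|}{\le\alpha n}^{2m}\le 2^{2H(\alpha)mn}$ and produces a genuine $\Theta(\beta n^2)$ saving in the $X$--$Y$ part of the link. Your sketch appeals to lower-density (i)/(ii) directly, but those enter only one level down, inside the proofs of Lemmas~\ref{badvertices} and~\ref{richedges}; what you need here is the richness bookkeeping itself.

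Second, the accounting is off. Writing $|Y|\approx n/3$, $|X|\approx 2n/3$, the naive budget for $L(x)$ is $\binom{|Y|}{2}+|X||Y|\approx n^2/18+2n^2/9=5n^2/18$, not $(2/9+O(\mu))n^2$. To reach $2n^2/9-\beta n^2/5$ you need two independent savings: Lemma~\ref{matchcount} (applied to the maximum matching $M$ and the $Y$--$Y$ link) brings the $\binom{|Y|}{2}$ term down to roughly $|Y|m\approx\beta n^2/6$, a saving of order $n^2/18$; and the $\alpha$-richness cap supplies the further $\Theta(\beta n^2)$ saving in the $X$--$Y$ link as above. You describe Lemma~\ref{matchcount} as giving a saving of ``$c\beta n^2$'' and then propose to merge it with the incidence restriction to $X$ into a single ``variant of matchcount''; in fact these two savings are of entirely different orders and are applied to disjoint parts of the link, and conflating them is precisely where your exponent bookkeeping would break if you tried to carry the computation through.
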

\proof
First we  fix an optimal partition $(X,Y)$ of $\HH$, which
can be chosen in at most $2^n$ ways. Choose an $x\in X$, which can
be done in at most $n$ ways. Assume that $|L_{Y,Y}(x)|\ge \beta
n^2$. Let $$B:=\{z\in Y:\ \exists y\in Y \text{ s.t. } xyz \text{ is
}\alpha\text{-rich, where } z \text{ is the poor vertex of } xyz
\}.$$ By Lemma~\ref{richedges} (i) we have $|B|\le 2\mu n$. So $Y-B$
does not contain both $y$ and $z$ from  an $\alpha$-rich edge $xyz$.
Let $M \subset {Y-B \choose 2}$ be a maximum matching
in $L_{Y,Y}(x)$. Since $|Y|<n/2$ and $\beta>10\mu$, we have
  $$|M|\ge (|L_{Y,Y}(x)|-2\mu n^2)/2|Y|\ge
 \beta n/2.$$
  Denote the vertex set of the matching $M$  by $A$, and let  $m=|M|$.

   The number of choices for $A$  is at
   most $2^n$, and the number of choices for $M$
is at most $2^{n\log n}$.
 For every $y\in A$ we have
$|L_X(x,y)|<\alpha n^2.$ The number of choices for $\HH-x$ is at
most $|Forb(n-1,T_5)|$, and by Lemma \ref{badvertices} part (i) the number of choices for $L_{X,X}(x)$ is
at
most $\sum_{i\le 2\mu n^2} \binom{n^2}{i}\le 2^{H(2\mu)n^2}$. The number of choices for  the edges of
$L_{Y,Y}(x)$ intersecting $B$ is at most $2^{|B||Y|}< 2^{\mu n^2}$.
Using Lemma~\ref{matchcount}, given $M$, the number of ways the rest
of $L_{Y,Y}(x)$ can be chosen is at most
$$2^{2m^2-2m} (|Y|-2m + 2^{|Y|-2m+1})^{m}<2^{2m^2-2m} (2^{|Y|-2m+2})^{m}= 2^{|Y|m}.$$

Since $|Y|\le n/3+ \mu n$, the number of ways the consistent edges containing $x$ could be chosen is at most
 $2^{|X||Y|}<2^{2n^2/9+\mu n^2}$. Our goal is to improve this bound by using the fact  that $x$ violates condition (1).
  Specifically, we write $Y=A \cup (Y-A)$ and replace $2^{\frac{2n^2}{9}+\mu n^2}$ by
 $2^{\frac{2n^2}{9}+\mu n^2}\cdot 2^{-2m|X|}\cdot \ell$, where $\ell$ is the number of ways to
  add edges of the form $xab\in \HH$ with $a\in A, b\in X$.

 The number of ways to choose the (consistent) edges of the form
$xab\in \HH$ with $a\in A, b\in X$ is
$$\ell\le \left(\sum_{i\le \alpha n}\binom{|X|}{i}\right)^{2m}< 2^{2H(\alpha)mn}.$$ Here we use the fact that $a,x$ are
in a non-rich inconsistent edge, so for given $a$ this restricts the
number of choices for  $b$. To summarize, the number of 3-graphs
for given $m$ violating (1) is at most
\begin{equation}\label{part2} n2^n2^n 2^{n\log n}|Forb(n-1,T_5)|2^{H(2\mu)n^2}2^{\mu n^2}
2^{-2m|X|} 2^{|Y|m}2^{\frac{2n^2}{9}+\mu n^2}2^{2H(\alpha)mn}.
\end{equation}
The coefficient of $m$ in the exponent above is
$$-2|X|+|Y|+2H(\alpha)n<-1/2.$$
Therefore, viewing \eqref{part2} as a function of $m$, it is maximized when $m$
is minimized, i.e.~$m = \beta n/2$. Since $100H(2\mu)<\beta \ll 1$, we have
$$\frac{2\log n}{n^2}+\frac{2+\log n}{n} + H(2\mu)+ 2\mu-\beta/4 <-\beta/5.$$ Since there are at most $n$ choices for $m$, we conclude that the
number of 3-graphs violating (1) is bounded above by
\begin{equation}\label{part2s}
|Forb(n-1,T_5)|2^{\frac{2n^2}{9} - \frac{\beta n^2}{5}}
\end{equation}
as required. \qed

\subsubsection{3-graphs  violating
(2)}  In this section we prove the following Lemma.

\begin{lemma} \label{2}
The number of  $\HH \in Forb(n,T_5,\eta,\mu)$ violating condition (2)
is at most
$$|Forb(n-1,T_5)|\cdot 2^{\frac{n^2}{17}}.$$
\end{lemma}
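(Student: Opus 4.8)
The plan is to run a much simpler version of the counting argument used for Lemma~\ref{1}, with no rich-edge or matching machinery needed. The key observation is that if $\HH\in Forb(n,T_5,\eta,\mu)$ violates (2), then $\HH$ has an optimal ordered partition $(X,Y)$ and a vertex $y\in Y$ with $|L_{Y,Y}(y)|\ge 2\mu n^2$; I would then feed this into Lemma~\ref{badvertices}. Since $|L_{Y,Y}(y)|\ge 2\mu n^2$, part~(iii) of that lemma forces $|L_{X,X}(y)|<2\mu n^2$, and part~(ii) gives $|L_{X,Y}(y)|\le 2\mu n^2$ directly. So two of the three pieces of the link graph of $y$ are automatically small, and only $L_{Y,Y}(y)$ can be large. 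But $\HH$ is $\mu$-lower-dense, so condition~(v) of Definition~\ref{lowdef} gives $|Y|<n/3+\mu n$, hence there are only $\binom{|Y|}{2}<\tfrac{n^2}{18}+\mu n^2$ pairs inside $Y$ in the first place.

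Next I would count the 3-graphs violating (2) as follows. Choose the optimal partition $(X,Y)$ (at most $2^n$ ways) and the bad vertex $y\in Y$ (at most $n$ ways). The 3-graph $\HH-y$ is $T_5$-free on $n-1$ vertices, contributing the factor $|Forb(n-1,T_5)|$. It then remains to choose the link graph of $y$, which splits into $L_{X,X}(y)$, $L_{X,Y}(y)$, and $L_{Y,Y}(y)$. Using the bounds above together with \eqref{x}, the number of choices for $L_{X,X}(y)$ is at most $\sum_{i\le 2\mu n^2}\binom{n^2}{i}\le 2^{H(2\mu)n^2}$, and the same bound holds for $L_{X,Y}(y)$; for $L_{Y,Y}(y)$ I would just allow all $2^{\binom{|Y|}{2}}\le 2^{n^2/18+\mu n^2}$ subsets. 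Multiplying gives a bound of the form
\[
n\,2^n\,|Forb(n-1,T_5)|\,2^{2H(2\mu)n^2}\,2^{n^2/18+\mu n^2}.
\]

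Finally, I would observe that $\tfrac1{17}-\tfrac1{18}=\tfrac1{306}>0$, and that under the hierarchy \eqref{parameters} the quantity $2H(2\mu)+\mu$ is far smaller than $\tfrac1{306}$, so for $n$ large the factors $n\,2^n$ are absorbed and $n\,2^n\,2^{2H(2\mu)n^2}\,2^{n^2/18+\mu n^2}<2^{n^2/17}$, yielding the claimed bound $|Forb(n-1,T_5)|\cdot 2^{n^2/17}$. The only real content here is Lemma~\ref{badvertices}, which confines the large part of $y$'s link to $\binom{Y}{2}$; everything else is crude enumeration, and the slack $\tfrac1{18}<\tfrac1{17}$ is exactly what makes crude enumeration enough. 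Accordingly I do not expect a genuine obstacle in this lemma — the only thing to watch is that the entropy and polynomial overhead really fit inside the $\tfrac1{306}n^2$ slack, which is immediate from \eqref{parameters}. \qed
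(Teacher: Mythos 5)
Your proposal is correct and follows essentially the same route as the paper: fix the optimal partition and the bad vertex $y$, use Lemma~\ref{badvertices}(iii) to bound $|L_{X,X}(y)|$, bound $|L_{X,Y}(y)|$ by $2\mu n^2$ (the paper derives this from optimality of the partition, you cite Lemma~\ref{badvertices}(ii) directly, which amounts to the same thing), and then observe that the only unconstrained part of $y$'s link lives inside $\binom{Y}{2}$ where $|Y|<n/3+\mu n$ by Definition~\ref{lowdef}(v), so the crude bound $2^{\binom{|Y|}{2}}\approx 2^{n^2/18}$ already beats $2^{n^2/17}$ with room for the entropy and polynomial factors. The only cosmetic divergence is that the paper writes $2^{|Y|^2/2}$ and you split out the $\mu n^2$ error term explicitly; this is immaterial.
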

\proof
 First
fix an optimal partition $(X,Y)$, which can be chosen at most $2^n$
ways. Given an optimal partition $(X,Y)$, assume that there is a
$y\in Y$ such that $|L_{Y,Y}(y)|\ge 2\mu n^2.$ Then by
Lemma~\ref{badvertices} (iii) we have $|L_{X,X}(y)|< 2\mu n^2$, and
by optimality of the partition $(X,Y)$ we have $|L_{X,Y}(y)|\le 2\mu
n^2$. So the number of 3-graphs having such a vertex $y$ is at
most
\begin{equation}\label{onebad}
n2^n |Forb(n-1,T_5)|\cdot 2^{{|Y|^2}/{2}+2H(2\mu)n^2}<
|Forb(n-1,T_5)|\cdot 2^{\frac{n^2}{17}},
\end{equation}
where we used  condition (iv) of Definition~\ref{lowdef}. \qed

\subsubsection{3-graphs  satisfying (1) and (2) but violating
(3)}  In this section we prove the following Lemma.

\begin{lemma} \label{3}
The number of  $\HH \in Forb(n,T_5,\eta,\mu)$ satisfying conditions (1) and (2) but violating condition (3)
is at most
$$|Forb(n-3,T_5)|2^{\frac{2n^2}{3} -\frac{\alpha^2n^2}{3}}.$$
\end{lemma}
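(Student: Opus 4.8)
The plan is to mimic the structure of the proofs of Lemmas~\ref{1} and~\ref{2}, but now peeling off \emph{three} vertices of a single $\alpha$-rich edge at once, which is why the bound involves $|Forb(n-3,T_5)|$ rather than $|Forb(n-1,T_5)|$. Fix an $\HH$ satisfying (1) and (2) but violating (3). First I would fix an optimal partition $(X,Y)$ (at most $2^n$ choices) and an $\alpha$-rich edge $xyz$ in it, with $x\in X$, $y,z\in Y$, $z$ the poor vertex, and (without loss of generality, renaming) $|L_X(x,y)|>\alpha n$; the choice of the ordered triple $(x,y,z)$ costs at most $n^3$. We now record $\HH$ by specifying $\HH'':=\HH-\{x,y,z\}\in Forb(n-3,T_5)$ together with the ``link data'' of $x,y,z$, i.e.\ all edges of $\HH$ meeting $\{x,y,z\}$. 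The crude count of this link data is $2^{3n^2/2+O(n^2)}$ but we must do better: the heart of the argument is to show that $\alpha$-richness forces many \emph{forbidden} patterns among these edges, so the number of admissible link configurations is at most $2^{3n^2/2 - \Omega(\alpha^2 n^2)}$, and then combine with Lemma~\ref{mon}(ii), which says $S(n)\ge S(n-3)\cdot 2^{(6n^2-27n+28)/9}$, noting $6/9 = 2/3$.

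The key step — and the main obstacle — is the $T_5$-avoidance argument that limits the link data. Since $xyz$ is $\alpha$-rich, the set $W:=L_X(x,y)$ has $|W|>\alpha n$. For any two vertices $a,b\in W$ we have $xya, xyb \in \HH$ by definition of $W$, and $xyz\in\HH$; so if in addition $abz\in\HH$ then $\{xyz, xya, xyb, abz\}\cong T_5$, a contradiction. Hence \emph{no pair inside $W$ lies in $L(z,\cdot)$ together with $z$} — more precisely, the link graph $L_{[n]}(\cdot,z)$ restricted to $W$ must be empty, so once $W$ is revealed (at most $2^n$ choices, or we may just union-bound over all $\binom{n}{\ge \alpha n}$ sets), the pairs $\{a,b\}\subset W$ contribute nothing to the link of the pair $z$, saving roughly $\binom{\alpha n}{2}\ge \alpha^2 n^2/3$ bits in the description of the edges through $z$. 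One then books the remaining link edges of $x$, $y$, $z$ freely, using Lemma~\ref{badvertices} (parts (i),(ii) bound $|L_{X,X}(x)|,|L_{X,Y}(y)|$) and conditions (1),(2) (which bound $|L_{Y,Y}(x)|,|L_{Y,Y}(y)|$) to keep the entropy contributions from the other link pieces under control — each of these is a $2^{H(\cdot)n^2}$ or $2^{2\mu n^2}$ factor that is absorbed into the slack, exactly as in Lemmas~\ref{1}--\ref{2}.

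Assembling: the number of bad $\HH$ is at most
$$
2^n\cdot n^3\cdot |Forb(n-3,T_5)|\cdot 2^{H(2\mu)n^2}\cdot 2^{2\beta n^2}\cdot 2^{n}\cdot 2^{\frac{3n^2}{2} - \frac{\alpha^2 n^2}{3}},
$$
where the $2^{2\beta n^2}$ packages the $L_{Y,Y}(x)$ and $L_{Y,Y}(y)$ contributions from (1),(2), the $2^{H(2\mu)n^2}$ the $L_{X,X}$-type contributions, and the $2^n, n^3$ terms the partition and edge choices. Using the parameter hierarchy \eqref{parameters} — in particular $H(2\mu)\ll\beta\ll\alpha^2$ — all the lower-order exponents are swallowed, leaving a bound of the form $|Forb(n-3,T_5)|\cdot 2^{\frac{3n^2}{2} - \frac{\alpha^2 n^2}{3} + o(n^2)}$. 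Since $Forb(n-3,T_5)\supset \SS(n-3)$ but here we only need the crude containment $|Forb(n-3,T_5)| = 2^{(2/27+o(1))(n-3)^3}$ to check that the $3n^2/2$ exponent matches the $\frac{2}{3}n^2$ gain from Lemma~\ref{mon}(ii) up to the $-\alpha^2 n^2/3$ loss; rewriting $2^{3n^2/2}=2^{(6n^2/9)\cdot(9/4)}$ is not the right bookkeeping, so instead I would directly compare with $S(n)$ via $|Forb(n-3,T_5)|\le 2^{O(n^2)} S(n-3)$ (which follows from Theorem~\ref{stablet}/\ref{clean} applied at $n-3$, legitimate by the induction scheme described in Section~2) and then $S(n-3)\cdot 2^{2n^2/3}\le S(n)$ by Lemma~\ref{mon}(ii). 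The resulting bound is $|Forb(n-3,T_5)|\,2^{\frac{2n^2}{3}-\frac{\alpha^2 n^2}{3}}$, which is the claimed $|Forb(n-3,T_5)|2^{\frac{2n^2}{3}-\frac{\alpha^2n^2}{3}}$. The one genuinely delicate point, which I would need to handle carefully rather than wave at, is that revealing $W=L_X(x,y)$ does not interfere with the free/entropy counts of the \emph{other} link pieces (those of $x$ and $y$ themselves) — one must make sure the savings are extracted from a block of pairs (namely $\binom{W}{2}$ inside the link of $z$ on $X$) that is disjoint from the blocks being counted with full entropy elsewhere, so that the exponents genuinely add.
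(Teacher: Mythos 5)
Your core idea — that $\alpha$-richness plus $T_5$-freeness forbids all pairs $\{a,b\}\subset W:=L_X(x,y)$ from lying in the link of $z$, saving $\binom{\alpha n}{2}\ge\alpha^2n^2/3$ in the count of consistent edges through $z$ — is exactly the key step in the paper's proof, and it is correct. The obstacle is the bookkeeping of the rest of the link data, and you notice the mismatch but your proposed workaround does not repair it.

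The exponent $3n^2/2$ is the count of \emph{all} triples meeting $\{x,y,z\}$, but that is far too crude to meet the target $|Forb(n-3,T_5)|\cdot 2^{2n^2/3-\alpha^2n^2/3}$. The correct decomposition, which the paper makes explicitly, is into consistent and inconsistent edges through $\{x,y,z\}$ with respect to the fixed optimal partition $(X,Y)$. Consistent edges through $x$ have the shape $\{x,a,b\}$ with $a\in X,\ b\in Y$, giving about $|X||Y|$ possibilities; consistent edges through $y$ or through $z$ have the shape $\{a,b,\cdot\}$ with $a,b\in X$, giving about $\binom{|X|}{2}$ each; so the total for consistent edges is roughly $|X||Y|+|X|^2 = |X|n \le (2/3+\mu)n^2$, which is the source of the $2n^2/3$ in the target — not $3n^2/2$. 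The inconsistent edges through $\{x,y,z\}$ cost only $2^{O(H(\beta)+H(2\mu))n^2}$, using Lemma~\ref{badvertices}~(i),(ii) together with conditions (1) and (2) (which is precisely why the lemma assumes (1) and (2) hold); this is swallowed by the slack $\alpha^2>100(H(\beta)+H(2\mu)+\mu^2)$. Your final substitution $|Forb(n-3,T_5)|\le 2^{O(n^2)}S(n-3)$ followed by $S(n-3)\cdot 2^{2n^2/3}\le S(n)$ cannot absorb a $2^{3n^2/2}$ factor: the surplus $3n^2/2 - 2n^2/3 = 5n^2/6$ dwarfs the $\alpha^2n^2/3$ saving, so as written the argument proves only the much weaker bound $|Forb(n-3,T_5)|\cdot 2^{3n^2/2-\alpha^2n^2/3}$. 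Replacing the crude $3n^2/2$ by the consistent/inconsistent split gives exactly the paper's estimate and closes the gap; the point you flag at the end about keeping the savings block $\binom{W}{2}$ disjoint from the other entropy blocks is indeed handled automatically in that split, since the savings live entirely inside the consistent edges through $z$.
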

\proof
 Assume that $(X,Y)$ is an optimal partition of
$\HH$ and $xyz$ is an $\alpha$-rich edge with $x\in X, y,z\in
Y$ and $|L_X(x,y)|\ge |L_X(x,z)|$. The edge $xyz$ could be chosen in
at most $n^3$ ways and $L_X(x,y)$ can be chosen in at most $2^n$
ways. Given these choices, we can choose $\HH-\{x,y,z\}$ in at most
$|Forb(n-3,T_5)|$ ways. By Lemma~\ref{badvertices}~(i) and the fact
that $\HH$ satisfies condition (1), the number of ways the
inconsistent edges  containing $x$ can be chosen is at most
$2^{H(2\mu)n^2+ H(\beta) n^2}$. By Lemma~\ref{badvertices}~(ii) and the fact that $\HH$ satisfies condition (2), the number of ways of having the
inconsistent edges intersecting $y$ or $z$ is at most $2^{4H(2\mu)n^2}$. The number of
ways the consistent edges containing $x$ or $y$ could be chosen is at
most $2^{|X|\cdot|Y|+{|X|^2}/{2}}$.
The number of
ways the consistent edges containing
 $z$ could be chosen is at most
$2^{\frac{|X|^2}{2}-{|L_X(x,y)|\choose 2}}$, as for $a,b\in
L_X(x,y)$, edge $abz$ together  with $xyz,xya,xyb$
 forms a copy of $T_5$. Since  $xyz$ is an $\alpha$-rich   $|L_X(x,y)|\ge \alpha n.$
 So the number of 3-graphs satisfying (1) and (2) but violating (3) is at most
\begin{equation} \label{123} 2^nn^3|Forb(n-3,T_5)|2^{H(2\mu)n^2+ H(\beta)n^2+4H(2\mu)n^2}
\cdot 2^{|X|  \cdot
|Y|+\frac{|X|^2}{2}+\frac{|X|^2}{2}-{|L_X(x,y)|\choose 2}}.\end{equation}
 Since $\alpha^2>100(H(\beta)+H(2\mu) +\mu^2)$,
 $$\frac{n+3\log n}{n^2}+  6 H(2\mu)+H(\beta)+\mu- \alpha^2/2+ \frac{\alpha}{n} <-\frac{\alpha^2}{3}.$$
  As $|X|\le 2n/3+\mu n$ and $|X||Y|+|X|^2=|X|n$, we conclude that (\ref{123}) is at most
\begin{equation}\label{partiii}
 |Forb(n-3,T_5)|2^{\frac{2n^2}{3} -\frac{\alpha^2n^2}{3}},
\end{equation} thereby completing the proof. \qed

\subsubsection{3-graphs  satisfying (1), (2) and (3) but violating
(4)}  In this section we prove the following Lemma.

\begin{lemma} \label{4}
The number of  $\HH \in Forb(n,T_5,\eta,\mu)$ satisfying conditions (1) and (2) and (3) but violating condition (4)
is at most
$$|Forb(n-3,T_5)|
2^{\frac{7n^2}{11}}.$$
\end{lemma}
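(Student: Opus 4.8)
The plan is to combine the standard counting skeleton with a single clean consequence of $T_5$-freeness. Observe first that $\HH$ violates (4) exactly when there is an optimal partition $(X,Y)$ of $\HH$ and an inconsistent edge $xyz\in\HH$ with either $\{x,y,z\}\subseteq X$ or $\{x,y,z\}\subseteq Y$. I would fix such a partition (at most $2^n$ ways), then fix $xyz$ (at most $n^3$ ways), and then fix $\HH-\{x,y,z\}$, which is a $T_5$-free $3$-graph on $n-3$ vertices and so can be chosen in at most $|Forb(n-3,T_5)|$ ways. What remains is to bound the number of ways of choosing the edges of $\HH$ that meet $\{x,y,z\}$.

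The decisive observation is that, since $xyz\in\HH$ and $\HH$ is $T_5$-free, for every pair $\{a,b\}$ with $a,b\notin\{x,y,z\}$ at most two of the three triples $\{a,b,x\},\{a,b,y\},\{a,b,z\}$ can lie in $\HH$; indeed, if all three were present then together with $xyz$ they would span a copy of $T_5$. I would use this to handle the ``large'' portion of the neighborhood of $\{x,y,z\}$. If $\{x,y,z\}\subseteq X$, the only link parts not already known to be small are $L_{X,Y}(x),L_{X,Y}(y),L_{X,Y}(z)$, i.e.\ the triples $\{w,a,b\}$ with $w\in\{x,y,z\}$, $a\in X\setminus\{x,y,z\}$, $b\in Y$; grouping these triples by the pair $\{a,b\}$ and applying the observation, the number of ways to choose all of them is at most $7^{(|X|-3)|Y|}\le 7^{|X||Y|}$. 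If instead $\{x,y,z\}\subseteq Y$, the large parts are $L_{X,X}(x),L_{X,X}(y),L_{X,X}(z)$; grouping by the pair $\{a,b\}\in\binom{X}{2}$ gives at most $7^{\binom{|X|}{2}}$ ways. By Definition~\ref{lowdef}(v) we have $||Y|-n/3|<\mu n$, hence $|X||Y|$ and $\binom{|X|}{2}$ are each at most $\tfrac{2n^2}{9}+O(\mu n^2)$, so in both cases this step costs at most $7^{\frac{2n^2}{9}+O(\mu n^2)}=2^{(\frac{2}{9}\log_2 7+O(\mu))n^2}$ choices.

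The remaining link parts are bounded by results already in hand. If $\{x,y,z\}\subseteq X$: Lemma~\ref{badvertices}(i) gives $|L_{X,X}(w)|\le 2\mu n^2$ and condition (1) (which $\HH$ satisfies) gives $|L_{Y,Y}(w)|\le\beta n^2$ for each $w\in\{x,y,z\}$; if $\{x,y,z\}\subseteq Y$: Lemma~\ref{badvertices}(ii) gives $|L_{X,Y}(w)|\le 2\mu n^2$ and condition (2) gives $|L_{Y,Y}(w)|<2\mu n^2$. Together these contribute at most $2^{3H(2\mu)n^2+3H(\beta)n^2}$ further choices, and the triples meeting $\{x,y,z\}$ in two or three points contribute only $2^{O(n)}$. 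Multiplying all factors (including the $2^n\cdot n^3$ prefactor and the factor $2$ for the two cases) and using that $\tfrac{2}{9}\log_2 7<0.624$ while $\tfrac{7}{11}>0.636$, so that the gap comfortably absorbs $O(\mu)+3H(2\mu)+3H(\beta)$ — which is tiny by (\ref{parameters}) — I would conclude that the number of such $\HH$ is at most $|Forb(n-3,T_5)|\,2^{7n^2/11}$.

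The main thing to be careful about is the accounting of which parts of the link of $\{x,y,z\}$ are genuinely ``expensive'': only the triples contributing one potential pair among the roughly $\tfrac{2n^2}{9}$ pairs of the relevant type need the $T_5$-saving, everything else being forced to size $O(\mu n^2)$ or $O(\beta n^2)$ by Lemma~\ref{badvertices} and conditions (1)--(2). The numerically tight step is the inequality $\tfrac29\log_2 7<\tfrac7{11}$, whose slack (about $0.012$) must be wide enough to swallow the lower-order $\mu$- and $\beta$-terms, which it is by the parameter hierarchy in (\ref{parameters}).
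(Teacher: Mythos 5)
Your proposal is correct and follows essentially the same approach as the paper: fix the partition, the offending edge $xyz$, and $\HH-\{x,y,z\}$; bound the small link parts using Lemma~\ref{badvertices} and conditions (1), (2); and handle the one unconstrained link part (consistent edges touching exactly one of $x,y,z$) by the observation that $\{a,b,x\},\{a,b,y\},\{a,b,z\}$ cannot all be edges (else together with $xyz$ they form $T_5$), giving the factor $7^{|X||Y|}$ or $7^{\binom{|X|}{2}}$ and the numerically tight inequality $\tfrac{2}{9}\log_2 7<\tfrac{7}{11}$. This is precisely the paper's argument, down to the same decisive use of the ``at most $7$ of $8$'' counting.
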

\proof  First fix an optimal partition $(X,Y)$,
which can be chosen at most $2^n$ ways.
 Given an optimal partition $(X,Y)$,  an inconsistent edge $xyz$ could be chosen in
 at most $n^3$ ways. We can choose $\HH-\{x,y,z\}$ in
at most $|Forb(n-3,T_5)|$ ways. The number of  edges having at least
 two of $x,y,z$ is at most $3n$, giving at most $2^{3n}$ ways to place
them.

Now consider the case that $x,y,z\in X$.
  There are two
types of inconsistent edges $e$ containing one of $\{x,y,z\}$, either $e \subset X$, or $e-\{x,y,z\} \subset Y$.
In the first case Lemma~\ref{badvertices} (i) implies that there are at most
$3\cdot 2\mu n^2$ such edges, and in the second case,  since $\HH$ satisfies condition (1)  there are at most $3\cdot \beta n^2$ such edges.
So the number
of ways the  inconsistent edges  intersecting $\{x,y,z\}$ can be
chosen is at most
$$2^{(3H(\beta)+3H(2\mu)) n^2}.$$
The number of ways that the consistent edges containing any of
$x,y,z$  can  be chosen is restricted as follows:
   For any $a\in X, b\in Y$ out
of the $8$ possibilities including edges $abx,aby,abz$ only $7$ can
occur (all of them cannot be chosen at the same time), so the number
of possible connections is at most $7^{|X||Y|}$.

Consider now  the other case when $x,y,z\in Y$.  There are two types
of inconsistent edges $e$: Either $e\subset Y$ or $e\cap X\ne 0$. In
the first case, since $\HH$ satisfies  condition (2), that there are at most $3\cdot
2\mu n^2$ such $e$, and in the second case
  Lemma~\ref{badvertices} (ii) implies that there are at most $3\cdot 2\mu n^2$
  such $e$.
 So the number of ways to choose those edges is at most $2^{6H(2\mu)
n^2}<2^{(3H(\beta)+3H(2\mu)) n^2}.$ Now let us bound the number of
ways the consistent edges intersecting $\{x,y,z\}$ can be chosen.
Since
 for any pair  $a,b\in X$, we cannot have $\{abx,aby,abz\}\subset \HH$,
 the number of ways to place these type of edges is at most $7^{|X|^2/2}$.

Altogether the number of 3-graphs satisfying (1), (2) and (3) but violating (4) is bounded by
\begin{equation}\label{partiv}2^{n+1}n^3 2^{3n} 2^{{(3H(\beta)+3H(2\mu)) n^2}}|Forb(n-3,T_5)|
\left(7^{|X||Y|}+ 7^{\frac{|X|^2}{2}}\right).
\end{equation}
Since $\log_2 7< 2.81$, $\max\{|X||Y| ,|X|^2/2\}\le
(2/9+\mu)n^2 -1$, and
$$
\frac{n+1+3\log n+ 3n}{n^2} + 3H(\beta) +3H(2\mu)
+\frac{1}{n^2}<\frac{1}{100},
$$
(\ref{partiv}) is upper bounded by
$$|Forb(n-3,T_5)|
2^{\frac{7n^2}{11}}$$
as required. \qed

\subsubsection{3-graphs  satisfying (1), (2), (3) and (4) but violating
(5)}
Let us denote the 3-graphs $\HH$ described in the title of this section by $Forb^{(1)}(n,T_5,\eta,\mu)$.
Our goal in  this section is to prove the following Lemma.

\begin{lemma} \label{5}
The number of  $\HH \in Forb^{(1)}(n,T_5,\eta,\mu)$
is at most
$$(2^{-\alpha n^3}+2^{-n/10})S(n).$$
\end{lemma}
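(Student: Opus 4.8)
The plan is to show that almost all $\HH \in Forb^{(1)}(n,T_5,\eta,\mu)$ are in fact semi-bipartite, so that the count of those which are \emph{not} semi-bipartite is tiny compared to $S(n)$; the ones that are semi-bipartite contribute at most $S(n)$, but we need a better bound — actually the point is that membership in $Forb^{(1)}$ with no violation of (1)--(4) but violating (5) forces a very rigid structure, and we will eventually contradict (5) itself or bound the count directly. More precisely, fix $\HH \in Forb^{(1)}(n,T_5,\eta,\mu)$ together with an optimal partition $(X,Y)$ witnessing that conditions (1)--(4) hold but (5) fails, so there is an inconsistent edge $xyz$ with exactly one point, say $x$, in $X$ and $y,z \in Y$. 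First I would use condition (3): since $\HH$ has no $\alpha$-rich edge in this optimal partition, $\max\{|L_X(x,y)|, |L_X(x,z)|\} \le \alpha n$. This is the key leverage point — the inconsistent edge $xyz$ with $x \in X$ has small codegree into $X$ on both pairs $xy$ and $xz$, which will severely restrict how the consistent edges through $x$, $y$, $z$ can be chosen.

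Next I would set up the counting exactly as in Lemmas \ref{1}--\ref{4}: the optimal partition is chosen in at most $2^n$ ways, the inconsistent edge $xyz$ in at most $n^3$ ways, the sets $L_X(x,y), L_X(x,z)$ (each of size $\le \alpha n$) in at most $\left(\sum_{i \le \alpha n}\binom{n}{i}\right)^2 \le 2^{2H(\alpha)n}$ ways, and the rest $\HH - \{x,y,z\}$ in at most $|Forb(n-3,T_5)|$ ways. Using Lemma \ref{badvertices}(i) together with condition (1), the inconsistent edges at $x$ number at most $2\mu n^2 + \beta n^2$, contributing $2^{(H(2\mu)+H(\beta))n^2}$; using Lemma \ref{badvertices}(ii)(iii) together with condition (2), the inconsistent edges at $y$ or $z$ number at most $O(\mu n^2)$, contributing $2^{O(H(2\mu))n^2}$. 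The consistent edges through $\{x,y,z\}$ are the bottleneck: for each $a \in X$, the pair $\{x,a\}$ lies in an inconsistent edge (namely its extension into $Y$ is not fully forced, but crucially the consistent edges $xa w$ with $w \in X$... ) — more to the point, I would argue that for most pairs $a,b \in X$, at least one of $xyz$-type configurations is blocked: if $a,b \in L_X(x,y)$ then $xya, xyb, xyz$ plus an edge $abz$ would give $T_5$, hence $abz \notin \HH$; this removes $\binom{|L_X(x,y)|}{2}$ potential edges, but that is only $O(\alpha^2 n^2)$, not enough. Instead the real gain comes from the consistent edges \emph{through $x$}: the link graph $L_{X,Y}(x)$ of $x$ is a bipartite graph between $X$ and $Y$, and I would exploit that $\HH$ has no $\alpha$-rich edge to bound the consistent edges $xab$ with $a \in X, b \in Y$ in the style of the $\ell \le 2^{2H(\alpha)mn}$ estimate in Lemma \ref{1}, combined with the loss from edges forbidden by $T_5$-freeness. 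The target is to make the exponent of the consistent-edge count drop below $\frac{2n^2}{9}$ by an additive $\Omega(\alpha n^2)$ or $\Omega(n)$ term relative to the semi-bipartite optimum, and then multiply by the induction quantity $|Forb(n-3,T_5)|$, which by Theorem \ref{clean} (for $n-3$) and Lemma \ref{mon}(ii) is at most $(1+C'2^{-(n-3)/10})S(n-3) \le 2 \cdot S(n)/2^{(6n^2-27n+28)/9}$.

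Carrying out the arithmetic: the number of such $\HH$ is at most
$$2^n \cdot n^3 \cdot 2^{2H(\alpha)n} \cdot |Forb(n-3,T_5)| \cdot 2^{(H(2\mu)+H(\beta)+O(H(2\mu)))n^2} \cdot 2^{(\text{consistent count})},$$
and one shows the consistent count is at most $\frac{6n^2-27n+28}{9} - \alpha n^3/(\text{something}) + \dots$; the two regimes correspond to the two bullet terms $2^{-\alpha n^3}$ and $2^{-n/10}$ in the statement — the first when the $T_5$-forbidding losses accumulate to order $n^3$ (e.g.\ when many inconsistent edges of type (5) coexist), the second when there are few such edges and the gain is only of order $n$ coming from a single "wasted" vertex. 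I would split into these two cases at the outset: either $\HH$ has at least $\alpha n^3$ inconsistent edges of type $|\{x,y,z\}\cap X| = 1$, in which case the $2^{H(\eta)n^3}$ bound on placing inconsistent edges is replaced by a savings making the count $\le 2^{-\alpha n^3}S(n)$; or it has fewer, in which case one fixes a single such edge $xyz$ and runs the vertex-deletion argument above to get the $2^{-n/10}S(n)$ bound.

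\textbf{Main obstacle.} The hard part will be extracting an $\Omega(n)$ (or better) saving in the exponent of the consistent-edge count from a \emph{single} type-(5) inconsistent edge — the naive $T_5$-forbidding loss is only quadratic in the codegrees $|L_X(x,y)|$, which is $O(\alpha^2 n^2)$ and too weak. The resolution, following the "rich edges / shadow graph" ideas advertised in the introduction, should be to observe that \emph{optimality} of $(X,Y)$ combined with the absence of type-(4) inconsistent edges forces enough consistent structure at $y$ and $z$ that moving $x$ to $Y$ (or a related local swap) would not increase inconsistencies unless $x$ has essentially a full consistent link, and a full consistent link at $x$ together with the type-(5) edge $xyz$ and lower-density condition (i) or (ii) reproduces $T_5$ — this contradiction is what actually forces the count down, rather than a direct probabilistic saving. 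Pinning down exactly which local swap is valid (respecting that (4) does not hold for the \emph{violating} partition but the swapped partition might also be optimal) is the delicate point, and is presumably where the induction on $n$ via Theorem \ref{clean} for $n-3$ is genuinely used.
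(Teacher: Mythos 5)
Your proposal correctly identifies the key leverage point — condition (3) forces $|L_X(x,y)| \le \alpha n$ for any inconsistent edge $xyz$ of type (5), so each such pair excludes roughly $|X|-\alpha n$ consistent triples $axy$ from $\HH$ — and this is indeed what drives the $2^{-\alpha n^3}$ term in the paper. But your case split and your mechanism for the second term are both off the mark.

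First, the split. You propose to distinguish between $\HH$ having $\ge \alpha n^3$ or $< \alpha n^3$ inconsistent edges of type (5). But every $\HH \in Forb^{(1)}(n,T_5,\eta,\mu) \subset Forb(n,T_5,\eta)$ has at most $\eta n^3$ inconsistent edges altogether, and $\eta \ll \mu \ll \beta \ll \alpha$, so the ``many'' case is vacuous. The paper's split is at a different scale: it defines the \emph{shadow graph} $G = \bigcup_{y\in Y} L_{X,Y}(y)$ (a 2-graph on $[n]$ of size at most $O(n^2)$) and splits on whether $|G| \ge 100\alpha n^2$ or not (Lemma~\ref{clean2} vs.\ Lemma~\ref{bipcom}). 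The first case is handled by exactly the excluded-consistent-triples counting you describe: each of the $\ge 100\alpha n^2$ shadow edges kills $\ge |X|-\alpha n$ consistent triples, giving a cubic saving.

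Second, the small-shadow-graph case. You correctly flag that a single type-(5) edge yields only an $O(\alpha^2 n^2)$ naive saving and that this is ``too weak,'' and you then speculate about a local-swap/optimality argument driven by induction via Theorem~\ref{clean} for $n-3$. This is not how the paper proceeds, and Lemma~\ref{5} in fact makes no use of $|Forb(n',T_5)|$ or of the induction hypothesis for smaller $n'$ at all (unlike Lemmas~\ref{1}--\ref{4}). The paper's resolution is a double-counting argument on an auxiliary bipartite graph $B_i$ between $\C_i$ (the class with $|G|=i$) and $\SS(n)$: from $\HH\in\C_i$, delete all edges containing a shadow edge and re-add arbitrary consistent $X$-extensions of shadow edges, yielding a map into $\SS(n)$ with $\deg_{B_i}(\HH) \ge 2^{(|X|-1)i/2}$. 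Bounding the reverse degree (which requires the Claim that a semi-bipartite target has at most $2^{H(10\mu)n}$ possible optimal source partitions — a step you do not anticipate, and which uses $\mu$-lower-density condition (iii)) gives $|\C_i|/S(n) \le 2^{-in/9}$, and summing over $1\le i\le 100\alpha n^2$ yields $2^{-n/10}S(n)$. So the gain per shadow edge is roughly $|X|/2 - |Y|/2 - H(\alpha)n - O(\log n) \approx n/9$, and it comes from comparing degrees in $B_i$, not from deleting vertices and inducting. Your ``Main obstacle'' paragraph is a correct diagnosis but not a proof, and the resolution you guess at is not the one that works; the proposal as written has a genuine gap here.
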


Lemma \ref{5} will be proved in several steps. First we need some
more definitions. Let $\HH\in Forb^{(1)}(n,T_5,\eta,\mu)$ and
$(X,Y)$ be an optimal partition of $\HH$. The {\it shadow-graph} of
the inconsistent edges with respect $(X,Y)$ is
$$G:=G_{\HH}(X,Y):=\bigcup_{y\in Y} L_{X,Y}(y).$$

Let $Forb(n,T_5,\eta,\mu,\alpha)\subset Forb^{(1)}(n,T_5,\eta,\mu)$
 be the collection of 3-graphs $\HH$  whose every optimal
partition $(X,Y)$ satisfies  $|G_\HH(X,Y)|<  100\alpha n^2$.

\begin{lemma}\label{clean2}
For $n$ sufficiently large
\begin{equation}\label{clean3} |Forb^{(1)}(n,T_5,\eta,\mu)- Forb(n,T_5,\eta,\mu,\alpha)| <
 2^{-\alpha n^3}S(n).\end{equation}
\end{lemma}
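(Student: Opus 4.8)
The goal is to bound the number of $\HH\in Forb^{(1)}(n,T_5,\eta,\mu)$ that have an optimal partition $(X,Y)$ with a large shadow-graph $G=G_\HH(X,Y)$, i.e. $|G|\ge 100\alpha n^2$. The plan is to fix such a partition (costing a factor $2^n$), and then show that a shadow-graph of size $\ge 100\alpha n^2$ forces so many constraints on the consistent edges of $\HH$ that the total count falls below $2^{-\alpha n^3}S(n)$, using the estimate $\log_2 S(n)\ge \tfrac{2}{27}n^3-O(n^2)$ from Lemma~\ref{mon}(i).

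\textbf{Key steps.} First I would recall that since $\HH$ satisfies conditions (1)--(4), the inconsistent edges are heavily restricted: by Lemma~\ref{badvertices}(ii) each $y\in Y$ has $|L_{X,Y}(y)|\le 2\mu n^2$, by condition (2) each $y\in Y$ has $|L_{Y,Y}(y)|<2\mu n^2$, and by condition (4) there are no inconsistent edges lying wholly inside $X$ or wholly inside $Y$. So the only inconsistent edges are those with two points in $X$ and one in $Y$ (type $XXY$) — these are exactly the edges recorded in $G$ via the link graphs $L_{X,Y}(y)$ — together with a negligible $O(\mu n^3)$ worth of other edges touching $X$. The number of ways to place \emph{all} inconsistent edges, given $G$, is then at most $2^{H(2\mu)n^2\cdot|Y|+o(n^3)}=2^{o(n^3)}$ roughly, but more importantly, specifying $G$ with $|G|\ge 100\alpha n^2$ is itself a constraint. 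The heart of the argument is the following: for each vertex $y\in Y$ and each edge $xy\in L_{X,Y}(y)$ (so $xy$ is a ``shadow edge''), and for each $z\in Y$, the triple $xyz$ would be inconsistent if present in $\HH$; combined with condition (3) (no $\alpha$-rich edges) and the $T_5$-freeness, one gets that for a shadow edge $xy$, the set $L_X(x,y)$ is small ($|L_X(x,y)|\le\alpha n$), which kills consistent edges. More precisely, I would argue that a large shadow-graph $G$ propagates — via property (iii) of $\mu$-lower-density applied to suitable $A_X\subset X, A_Y\subset Y$, or via a direct $T_5$-embedding argument as in the proof of Lemma~\ref{3} — to a loss of order $\alpha\cdot n^3$ in the exponent of the number of choices for the consistent $XXY$-edges, compared to the maximum $\binom{|X|}{2}|Y|$.

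\textbf{Carrying it out.} Fix the optimal partition (factor $2^n$). Let $Y_1=\{y\in Y: |L_{X,Y}(y)|\ge 50\alpha n\}$; since $|G|\ge 100\alpha n^2$ and each $y$ contributes at most $2\mu n^2\le n\cdot(n)$ but we need the averaging the other way: since $|G|=\sum_{y}|L_{X,Y}(y)|\ge 100\alpha n^2$ and $|Y|\le n/3+\mu n$, a positive fraction of $G$ lies on vertices $y$ with $|L_{X,Y}(y)|\ge 50\alpha n$, so $\sum_{y\in Y_1}|L_{X,Y}(y)|\ge 50\alpha n^2$ and in particular $|Y_1|\ge 50\alpha n^2/(2\mu n^2)$... — one sets the thresholds so that $|Y_1|\ge cn$ and each such $y$ has link graph of size $\ge c'n$ on $X$. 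Now for $y\in Y_1$ write $N(y)=\{x\in X: xy\in G\}$, $|N(y)|\ge 50\alpha n$. For each $y\in Y_1$ and each $x\in N(y)$, the edge $xyz$ for $z\in Y$ is inconsistent, and since $\HH$ has no $\alpha$-rich edges, $|L_X(x,y)|<\alpha n$; hence for each such pair the consistent triples $xyb$ with $b\in X$ number at most $\sum_{i\le\alpha n}\binom{|X|}{i}=2^{H(\alpha)n}$ instead of $2^{|X|}$. Summing over the $\ge 50\alpha n\cdot cn$ pairs $(x,y)$ with $x\in N(y)$, $y\in Y_1$ (these are disjoint ``columns'' $\{xyb: b\in X\}$ for distinct pairs $xy$), the number of ways to choose consistent edges is at most $2^{(\tfrac29+\mu)n^3}\cdot 2^{-(1-H(\alpha))\cdot 50\alpha c n^3}$. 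Multiplying by $2^n$, by $|Forb(n-1,T_5)|\le 2^{o(n^3)}S(n)$ or simply by $2^{o(n^3)}$ for the rest of the hypergraph (inconsistent edges: $2^{o(n^3)}$ by the bounds above; edges not touching $X$ in $Y$: at most $2^{o(n^3)}$), and comparing with $\log_2 S(n)\ge \tfrac{2}{27}n^3-O(n^2)$ via Lemma~\ref{mon}(i), the exponent gap is $-(1-H(\alpha))\cdot 50\alpha c n^3 + o(n^3)$; since $H(\alpha)<0.01$ by \eqref{parameters} and $c$ is an absolute constant, this is $\le -\alpha n^3$ for $n$ large, giving \eqref{clean3}.

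\textbf{Main obstacle.} The delicate point is the bookkeeping that ensures the ``columns'' being killed are genuinely disjoint (so Chernoff / counting gives an honest product bound rather than an over-count), and that all the \emph{other} contributions — placing inconsistent edges, placing consistent edges not of the blocked form, and the residual freedom inside $Y$ — really are only $2^{o(n^3)}$ or are already subsumed in the $\tfrac{2}{27}n^3$ budget. This requires carefully using conditions (1)--(4) and Lemma~\ref{badvertices} to certify that \emph{every} inconsistent edge is of type $XXY$ up to an $O(\mu n^3)$ error, and that the $YYY$- and $XYY$-type consistent edges contribute a bounded exponent. Once that accounting is pinned down, the $T_5$-embedding / richness step is exactly the one already used in Lemma~\ref{3}, and the arithmetic is routine.
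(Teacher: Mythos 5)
Your proposal identifies the central mechanism used in the paper: a shadow edge $xy$ together with condition (3) forces $|L_X(x,y)|\le\alpha n$, so the ``column'' of consistent triples through $xy$ costs only about $H(\alpha)n$ bits of freedom instead of $|X|$ bits, and summing this saving over the $|G|\ge 100\alpha n^2$ shadow edges overwhelms the $2^n\cdot 2^{n^2}$ cost of fixing the partition and $G$; comparison with $\log_2 S(n)\ge\tfrac{2}{27}n^3-O(n^2)$ from Lemma~\ref{mon}(i) then finishes. That is exactly the paper's argument. However, two steps of your carrying-out have genuine gaps.

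First, the detour through $Y_1=\{y:|L_{X,Y}(y)|\ge 50\alpha n\}$ is both unnecessary and incorrect as stated: you treat $|G|$ as $\sum_{y}|L_{X,Y}(y)|$, but $G$ is the \emph{union} of the link graphs $L_{X,Y}(y)$, and each inconsistent edge contributes its two $X$-$Y$ pairs to two different $L_{X,Y}(y)$'s, so the union can be much smaller than the sum. The paper does no thresholding at all and applies $|L_X(x,y)|\le\alpha n$ to every edge of $G$. Second, and more substantively, your assertion that the columns $\{xyb:b\in X\}$ for distinct shadow edges are ``genuinely disjoint'' is false: if $\{x,y\}$ and $\{x',y\}$ are both in $G$ with the same $y\in Y$ — exactly the high-degree situation that $Y_1$ selects for — then the consistent triple $\{x,x',y\}$ lies in both columns. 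You flag this as the main obstacle but do not resolve it, and it would wreck the naive product bound. The paper's fix is cheap: a consistent triple with two $X$-points $a,b$ and one $Y$-point $y$ has only two $X$-$Y$ pairs $\{a,y\},\{b,y\}$, so it is attributed to at most two shadow edges, and one simply divides by $2$; the number of forced non-edges is thus at least $|G|(|X|-\alpha n)/2$. With this factor in place (and a parallel factor of $2$ giving the bound $2^{|G||Y|/2}$ for the inconsistent edges), the exponent is $\tfrac{2}{27}n^3-\tfrac{|G|}{2}\bigl(|X|-|Y|-2H(\alpha)n\bigr)+\alpha n^3+O(n^2)<\tfrac{2}{27}n^3-2\alpha n^3$, as required.
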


\proof
Let us count the number of $\HH\in Forb^{(1)}(n,T_5,\eta,\mu)-
Forb(n,T_5,\eta,\mu,\alpha)$. We can fix an optimal partition in at
most $2^n $ ways, and a shadow graph $G$ in at most $2^{n^2}$ ways.
As $\HH$ satisfies condition (3), there is no $\alpha$-rich edge of $\HH$.  Hence
for an edge $xy\in G$, there are at most $2^{H(\alpha)n}$ ways to choose
$L_X(x,y)$. Given $G$, the number of inconsistent
edges is at most $|G||Y|/2$ (each is counted twice). The number of consistent triples that are not edges is at least $|G|(|X|-\alpha n)/2$ for the following reason:  for each edge $xy \in G$, there is a vertex $z \in Y$ with $xyz \in \HH$. Since there is no $\alpha$-rich edge,
$|L_X(x,y)| \le \alpha n$, and so the number of consistent triples containing $x$ and $y$ that are not edges is at least $|X|-\alpha n$.
The factor two arises as these triples are counted at most twice.
Since ${|X| \choose 2}|Y| \le 2n^2/9$, we conclude that the number of consistent edges is at most
$$\frac{2n^3}{27}- \frac{|G|}{2}(|X|-\alpha n) \le \frac{2n^3}{27}
-\frac{|G||X|}{2}+\alpha n^3.$$
Each of these could  either be included in $\HH$ or
not. Altogether we obtain
\begin{eqnarray*}|Forb^{(1)}(n,T_5,\eta,\mu) - Forb(n,T_5,\eta,\mu,\alpha)|   &<& 2^n
2^{n^2} 2^{H(\alpha)n |G|} 2^{\frac{|G||Y|}{2}}2^{\frac{2n^3}{27}-
\frac{|G||X|}{2} +\alpha n^3}\\
&=&2^{\frac{2n^3}{27}-\frac{|G|(|X|-|Y|-2H(\alpha)n)}{2}+\alpha
n^3+ n^2+n}\\
&<&  2^{\frac{2n^3}{27}-2\alpha n^3}
\end{eqnarray*}
where the last inequality follows from
$|G|\ge 100\alpha  n^2$, $|X|-|Y|>n/4$ and
$H(\alpha)<0.01$.
The lower bound on $S(n)$ from Lemma~\ref{mon},
and  $n$  sufficiently large gives $S(n) > 2^{\frac{2n^3}{27}-\alpha n^3}$. Consequently,
$$|Forb^{(1)}(n,T_5,\eta,\mu) - Forb(n,T_5,\eta,\mu,\alpha)| \le
2^{-\alpha n^3}S(n)$$and the proof is complete. \qed

Now we shall show that the number of non-semi-bipartite 3-graphs
in $Forb(n,T_5,\eta,\mu,\alpha)$ is much smaller than the number of
semi-bipartite 3-graphs. First we partition
$Forb(n,T_5,\eta,\mu,\alpha)$ into $O(n^2)$ classes, and for each
class we construct a bipartite graph $B_i$.  One part of $B_i$ will be the
elements of a class $\C$, and the other part of $B_i$ will be the set of semi-bipartite
3-graphs $\SS(n)$.  $B_i$ will have the property that the degree of the
vertices in $\C$ will be exponentially larger than the degrees in
$\SS(n)$. This approach will allow us to prove the following Lemma.  Clearly Lemma \ref{clean2} and Lemma \ref{bipcom} immediately imply Lemma \ref{5}.

\begin{lemma}\label{bipcom}
For $n$ sufficiently large  $$|Forb(n,T_5,\eta,\mu,\alpha)-\SS(n)|<
2^{-n/10}S(n).$$
\end{lemma}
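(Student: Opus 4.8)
The plan is to set up the container-style bipartite graph described just before the statement. First I would partition $Forb(n,T_5,\eta,\mu,\alpha)$ into classes according to a small amount of data: an optimal partition $(X,Y)$ (of which there are at most $2^n$ choices, but we only need to record $|Y|$, giving $O(n)$ choices), the shadow graph $G=G_\HH(X,Y)$ up to a ``skeleton'' of its structure, and possibly the number of inconsistent edges. Since $|G|<100\alpha n^2$, the number of non-isomorphic relevant pieces of data is $2^{O(\alpha n^2)}$, which is too large; so instead I would group $\HH$ by the coarser invariant $|G|$ together with $|Y|$, yielding $O(n^2)$ classes $\C_i$, exactly as the paragraph before the lemma promises. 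For each class $\C_i$ I build a bipartite graph $B_i$ between $\C_i$ and $\SS(n)$: to each $\HH\in\C_i$ with optimal partition $(X,Y)$ associate all semi-bipartite 3-graphs $\HH'$ obtained by \emph{deleting} the inconsistent edges of $\HH$ and then freely adding or removing consistent triples that touch the vertex pairs carrying the shadow graph $G$ — i.e.\ $\HH$ and $\HH'$ agree off the ``active region'' determined by $G$.

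The key computation is a two-way degree count in $B_i$. On the $\C_i$ side: given $\HH$, removing its $\le |G||Y|/2$ inconsistent edges (by Lemma~\ref{badvertices} and condition (5), which forces all inconsistent edges to be captured by the shadow structure) and re-populating the consistent triples through $G$, the number of reachable $\HH'\in\SS(n)$ is large — at least $2^{|G|(|X|-\alpha n)/2}$, since each shadow edge $xy\in G$ has at least $|X|-|L_X(x,y)|\ge |X|-\alpha n$ consistent completions that are currently non-edges (no $\alpha$-rich edge, by condition (3)), each freely toggleable. On the $\SS(n)$ side: given a semi-bipartite $\HH'$, I reconstruct the possible preimages $\HH$. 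Here I record: the optimal partition data (cost $2^n$), the shadow graph $G$ (cost $2^{n^2}$ or, restricting to $|G|\le 100\alpha n^2$, cost $2^{H(100\alpha)n^2}=2^{O(\alpha\log(1/\alpha)n^2)}$), for each $xy\in G$ the set $L_X(x,y)$ (cost $2^{H(\alpha)n}$ each by condition (3), total $2^{H(\alpha)n|G|}$), the actual inconsistent edges sitting on $G$ (cost $2^{|G||Y|/2}$), and the toggled consistent edges through $G$ (cost $2^{|G||X|/2}$). The product of the reconstruction costs is roughly $2^{\,|G|(|X|+|Y|)/2 + H(\alpha)n|G| + O(\alpha\log(1/\alpha)n^2)}$.

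Comparing, the degree ratio from $\C_i$ to $\SS(n)$ is at least
$$2^{\frac{|G|}{2}(|X|-\alpha n) \;-\; \frac{|G|}{2}(|X|+|Y|) \;-\; H(\alpha)n|G| \;-\; O(\alpha\log(1/\alpha)n^2)} \;=\; 2^{-\frac{|G|}{2}(|Y|+\alpha n)\,-\,H(\alpha)n|G|\,-\,O(\alpha\log(1/\alpha)n^2)},$$
which as written is a \emph{loss}, not a gain — so the naive active region is wrong and the real content of the lemma is choosing the right region. The fix, and the main obstacle, is that one must not toggle all consistent triples through $G$ but rather exploit that for each $xy\in G$ with witness $z$ (so $xyz\in\HH$), the pairs $\{x,a\},\{a,b\}$ for $a,b\in L_X(x,y)$ are \emph{forbidden} from carrying certain edges lest a $T_5$ appear — this is precisely the mechanism already used in Lemma~\ref{clean2}. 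So the gain must come from the $|X|-\alpha n$ non-edges being \emph{genuinely extra freedom in $\HH'$ that $\HH$ did not have}, i.e.\ one compares $\HH$ to the \emph{maximum} semi-bipartite graph $B^3(n)$ and shows each inconsistent edge of $\HH$ costs (via the $T_5$-forbidding pairs) far more consistent non-edges than the $H(\alpha)n$ bits needed to encode its shadow contribution; summing over $|G|\ge $ (its value in the class) and over the $O(n^2)$ classes, and absorbing everything into the lower bound $S(n)>2^{2n^3/27-\alpha n^3}$ of Lemma~\ref{mon}, gives the stated $2^{-n/10}S(n)$. The delicate point — the step I expect to be hardest — is making the injection/degree argument tight enough that the $H(\alpha)n^2$-type overhead from encoding $G$ and the sets $L_X(x,y)$ is dominated, which forces $\alpha$ to be chosen after $\mu,\eta$ with $H(\alpha)$ small (as in \eqref{parameters}) and requires handling the case $|G|$ small (few inconsistent edges, where one wins a clean $2^{-\Omega(n)}$ per unit of $|G|$) separately from $|G|$ of order $\alpha n^2$.
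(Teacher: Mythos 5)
Your high-level framework matches the paper's: partition $Forb(n,T_5,\eta,\mu,\alpha)-\SS(n)$ into classes $\C_i$ by the size $i=|G|$ of the shadow graph of an optimal partition, build a bipartite auxiliary graph $B_i$ between $\C_i$ and $\SS(n)$ via the map ``delete everything through $G$, re-choose consistent triples on $G$ freely'', and compare degrees. However, your degree computation contains errors that cause you to conclude (wrongly) that the argument gives a loss, and your attempted repair (``compare $\HH$ to $B^3(n)$ and charge each inconsistent edge against consistent non-edges'') is not what is needed and is not carried out.

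The concrete problems with your $\deg_{B_i}(\HH')$ estimate for $\HH'\in\SS(n)$ are the following. First, you charge both for ``the set $L_X(x,y)$ for each $xy\in G$'' ($2^{H(\alpha)n|G|}$) \emph{and} for ``the toggled consistent edges through $G$'' ($2^{|G||X|/2}$); these are the same data. Recording $L_X(x,y)$ for every $xy\in G$ already determines all consistent edges of $\HH$ on $G$, so the $2^{|G||X|/2}$ factor is a double-count, and it is precisely this spurious factor that makes your ratio a loss. Once it is removed, the exponent on the degree ratio has coefficient of $|G|$ equal to $H(\alpha)n+\alpha n/2 +|Y|/2-|X|/2<-n/10$ (using $|X|-|Y|\ge n/4$ from Definition~\ref{lowdef}(v) and $\alpha$ small), which is a win of order $n$ per shadow edge. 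Second, you estimate the cost of encoding $G$ by $2^{H(100\alpha)n^2}$, a fixed overhead; but within the class $\C_i$ you know $|G|=i$ exactly, so the correct cost is $\binom{n^2}{i}\approx 2^{O(i\log n)}$, which scales with $i$ and is negligible against the per-edge gain of $n/10$. Your fixed overhead would swamp the gain whenever $i$ is small, which is why you felt forced to ``handle $|G|$ small separately''; with the correct $\binom{n^2}{i}$ bound no case split is needed. Third, you encode the optimal partition at cost $2^n$ (your alternative, recording only $|Y|$, is not enough: you need the actual bipartition). The paper replaces this with a separate claim, proved via the lower-density property (iii) of Definition~\ref{lowdef}, showing that all optimal partitions of all preimages of a fixed $\HH'$ agree up to symmetric difference $<10\mu n$, so the partition can be recorded at cost $2^{H(10\mu)n}$. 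With $\mu$ small this is dominated by a single shadow edge's gain, whereas $2^n$ would not be (since the gain per shadow edge is only about $n/10$).

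In short, the correct argument is exactly the bipartite-degree comparison you set up, with three repairs: drop the redundant $2^{|G||X|/2}$ charge, replace $2^{H(100\alpha)n^2}$ by $\binom{n^2}{i}$, and replace $2^n$ by $2^{H(10\mu)n}$ via the near-uniqueness-of-partitions claim. Your ``compare against $B^3(n)$'' heuristic is not needed, and as written it is not a proof.
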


\begin{proof}
 For $i\le  100\alpha n^2$ let $\C_i\subset
Forb(n,T_5,\eta,\mu,\alpha)-\SS(n)$  be the collection of
3-graphs which have an optimal partition in which the shadow
graph of inconsistent edges has exactly $i$ edges. We construct
a bipartite graph $B_i$ with parts $\C_i$ and $\SS(n)$. An $\HH\in
\C_i$ will be joined in $B_i$ to the following set of
semi-bipartite 3-graphs, denoted by $\Phi(\HH)$:\\
- Remove all edges which contain an edge of $G$ (the shadow
graph of $\HH$) (so all the
inconsistent edges will be removed.)\\
- For every $xy\in G$ add some collection of edges $axy$ to $\HH$
where $a\in X$.

First we give a lower bound on the degree (in $B_i$) of a vertex  $\HH \in \C_i$. Here we have to give a lower bound on the number of
edges of the form $axy$ where $xy\in G$ (and say $y\in Y$).
Each edge can be counted at most twice,  so the number of
edges that we must decide to add to $\HH$  is at least
$(|X|-1)i/2$, therefore  $\text{deg}_{B_i}(\HH) \ge 2^{(|X|-1)i/2}$.

Before proceeding further we need the following.

{\bf Claim.}
Let $\HH\in \SS(n)$ such that $\Phi^{-1}(\HH)\ne\emptyset$. Then
 the number of partitions of  $[n]$ which are optimal partitions of $[n]$ is at most
$$2^{H(10\mu)n}.$$
{\bf Proof of Claim.}
If $\F\in \Phi^{-1}(\HH)$ then $\F\in Forb(n,T_5,\eta,\mu,\alpha)$
so it has a partition with at most $\eta n^3$ inconsistent edges.
Let $\F_j\in \Phi^{-1}(\HH)$ have an optimal partition $(X_j,Y_j)$
for $j=1,2$. We claim that $|X_1\Delta X_2|< 10\mu n$. Indeed,
otherwise w.l.o.g. $|X_1-X_2|\ge 5 \mu n$.
 Then by Definition~\ref{lowdef}~(v)
we have $||Y_1|-n/3|,\  ||Y_2|-n/3| < \mu n$ so $|X_2\cap Y_1|\ge
3\mu n$ and $|X_1\cap X_2|> n/4$. This makes it possible to find
many inconsistent edges inside $X_2$, as using
Definition~\ref{lowdef}~(iii)
$$|\{abc\in \HH: a,b\in X_1\cap X_2, c\in X_2\cap Y_1 \}|\ge
\frac{3}{16}\mu n^3 >\eta n^3. $$
 This contradiction shows that
the optimal partitions do not differ too much from each other. To
complete the proof of the Claim, we may count the number of optimal $(X_2, Y_2)$
by first picking the vertices of $|X_1\Delta X_2|$ and observing
that this determines $(X_2,Y_2)$.
\qed

Now we  fix an $\HH\in \SS(n)$, and give an upper bound on its
degree in the auxiliary graph.  Please recall that in forming $\HH$ we did not change any of the consistent edges that did not contain any edge of $G$. \\
 - The number of ways $G$ could be chosen is at most
 $\binom{n^2}{i}$.\\
 - Given $(X,Y)$ and $G$, the number of ways the inconsistent
 edges could be added is at most $2^{i|Y|/2}$.\\
 - Given $G$, and $xy\in G$, as $xy$ arises from
 an inconsistent edge that is not $\alpha$-rich,
  the number of consistent edges on $xy$ in the source 3-graph is at most
  $\alpha n$.  This gives at most  ${n \choose \alpha n}^i$ possibilities to choose the consistent edges that contain an edge of $G$.

By the Claim, the number of optimal partitions $(X, Y)$ is at most
$2^{H(10\mu)n}$.
So for each $\HH\in \S(n)$ we have
$$ \text{deg}_{B_i}(\HH) \le
 2^{H(10\mu)n} \binom{n^2}{i} 2^{i|Y|/2}\binom{n}{\alpha n}^i\le \left(2^{10H(\mu)+6\log n +|Y|/2 + H(\alpha) n }\right)^i.$$

Trivially, $|\C_i|/|\SS(n)|$ is at most the ratios of the bounds of the
degrees, i.e.,
$$\frac{|\C_i|}{S(n)}\le \left(2^{10H(\mu)+6\log n +|Y|/2 + H(\alpha) n  -|X|/2 +
1/2}\right)^i.$$
Since $||Y|-n/3| \le \mu n$, and $\mu$ is sufficiently small, $|X|-|Y| \ge n/3 -2\mu n \ge n/4$. Consequently, the expression above is upper bounded by $2^{-in/9}$.
We conclude that
$$|Forb(n,T_5,\eta,\mu,\alpha)-\SS(n)| \le \sum_{i=1}^{100\alpha n^2} |\C_i| \le n^2 S(n) 2^{-n/9}< S(n) 2^{-n/10}$$
and the proof is complete.
\end{proof}

\subsection{Completing the proofs of Theorems \ref{maint}, \ref{clean} and \ref{badtriple}}

In this section we will simultaneously prove Theorems \ref{maint}, \ref{clean} and \ref{badtriple} by induction on $n$. Write Theorem $P(n)$ for the statement that Theorem $P$ holds for $n$.
Also, let Theorem \ref{clean}$(\eta, n)$ denote the statement that Theorem \ref{clean} holds for $n$ with input parameter $\eta$.

 Let us first choose $\eta>0$ sufficiently small so that the hierarchy of the parameters in (\ref{parameters}) holds and $\eta$ is a valid input parameter for
 Theorem \ref{clean}.
 The structure of the induction arguments in the three proofs is as follows:
$$\hbox{Theorem } \ref{maint}(n-1) \longrightarrow \hbox{Theorem } \ref{badtriple}(n)
\longrightarrow \hbox{Theorem } \ref{clean}(\eta, n) \longrightarrow \hbox{Theorem } \ref{maint}(n).$$
The above will prove that Theorems \ref{maint} and \ref{badtriple} hold, and that Theorem \ref{clean} holds with input $\eta$. Since this is proved for each $\eta>0$ that is sufficiently small, it also proves Theorem \ref{clean}.

 With input parameter $\eta$, Theorem \ref{stablet} outputs $\nu$ and $n_0$.
Let  $n_1>n_0$ be sufficiently large such that for every $n>n_1$
Lemmas \ref{t5lower}, \ref{mon}, \ref{1}, \ref{2}, \ref{3}, \ref{4}
and \ref{5} hold. We also require $1/n_1$ to be much smaller than
all the fixed small constants in (\ref{parameters}). Let $c>100$ be
chosen so that Theorem \ref{badtriple} holds with $C_1=c$ for all $n
\le n_1$,  Theorem \ref{clean} with input $\eta$ holds with $C'=c$
for all $n\le n_1$ and Theorem~\ref{maint} holds with $C=c$ for all
$n \le n_1$. Now we fix
 $$C= 2C'= 4C_1=4c>400.$$

{\bf Proof of Theorem~\ref{badtriple}.}
We wish to prove Theorem \ref{badtriple}$(n)$, so as indicated above, we may assume Theorem \ref{maint}$(n')$ for $n'<n$.
We recall that if $\HH \in Forb(n, T_5, \eta, \mu) -  \S(n)$, then $\HH$ violates one of the conditions (1)--(5). Consequently, an upper bound for $|Forb(n, T_5, \eta, \mu) -  \S(n)|$ is obtained by summing the bounds in Lemmas \ref{1}--\ref{5}, which is
$$|Forb(n-1,T_5)|2^{2n^2/9 - \beta n^2/5}+ |Forb(n-1,T_5)|\cdot
2^{n^2/17} +
|Forb(n-3,T_5)|2^{6n^2/9-\alpha^2n^2/3}$$
$$ + \ |Forb(n-3,T_5)| 2^{7n^2/11} + (2^{-\alpha n^3} +
2^{-n/10})S(n).$$
We may assume that Theorem \ref{maint}$(n')$ holds for all $n'<n$ with parameter $C$.  Hence  we can upper bound this expression by
$$S(n-1)(C 2^{-(n-1)/10}+1)(2^{2n^2/9 - \beta
n^2/5}+2^{n^2/17})$$ $$+\  S(n-3)(C
2^{-(n-3)/10}+1)(2^{6n^2/9-\alpha^2n^2/3}+2^{7n^2/11}) +
S(n)(2^{-\alpha
n^3} + 2^{-n/10}).$$
Let us upper bound the terms above separately.
Since $n>n_1$, Lemma \ref{mon} (ii), yields $S(n-1) \le S(n) 2^{-(2n^2-5n+1)/9}$.
 As $\beta$ is sufficiently small (by (\ref{parameters})), we also have $ 2^{2n^2/9 - \beta n^2/5}>2^{n^2/17}$. Therefore
$$S(n-1)(C 2^{-(n-1)/10}+1)(2^{2n^2/9 - \beta
n^2/5}+2^{n^2/17})<S(n)(C 2^{-(n-1)/10}+1)2^{-\beta n^2/6}.$$
Similarly, using  $S(n-3) \le S(n) 2^{-(6n^2-27n+28)/9}$ and
$2^{6n^2/9-\alpha^2n^2/3}> 2^{7n^2/11}$ we obtain
$$S(n-3)(C
2^{-(n-3)/10}+1)(2^{6n^2/9-\alpha^2n^2/3}+2^{7n^2/11})<
S(n)(C 2^{-(n-3)/10}+1)2^{-\alpha^2n^2/4}.$$
 Summing up these bounds, we conclude that $|Forb(n, T_5, \eta, \mu) -  \S(n)|$ is upper bounded by
$$S(n)[(C 2^{-(n-1)/10}+1)2^{-\beta n^2/6} +
(C 2^{-(n-3)/10}+1)2^{-\alpha^2n^2/4}
 + \ 2^{-\alpha n^3}+ 2^{-n/10}].$$
After expanding the expression above, we see that each of the six
summands  is upper bounded by $\frac{C_1}{6}S(n)2^{-n/10}$ and we
finally obtain
 $$|Forb(n, T_5, \eta, \mu) -  \S(n)| \le S(n)C_1
2^{-n/10}.$$
This completes the proof. \qed
 \medskip

 {\bf Proof of Theorem~\ref{clean}.}
We wish to prove Theorem \ref{clean}$(\eta, n)$, so as indicated
above, we may assume Theorem \ref{badtriple}$(n)$. We also use
Lemma~\ref{t5lower}, Lemma~\ref{mon} (i) and $C'= 2C_1$:
 \begin{eqnarray*}
 |Forb(n,T_5,\eta)- \SS(n)| &\le &
 |Forb(n,T_5,\eta)- Forb(n,T_5,\eta,\mu)|
  \ +
|Forb(n,T_5,\eta,\mu)-\SS(n)|\\
& \le & 2^{n^3(2/27-\mu^3/500)} + C_1
2^{-n/10}S(n)\\ & \le & C_1
2^{-n/10}S(n)+C_1
2^{-n/10}S(n)\\
&=&C' 2^{-n/10}S(n).
 \end{eqnarray*}\hfill \qed

 {\bf Proof of Theorem~\ref{maint}.}
We wish to prove Theorem \ref{maint}$(n)$, so as indicated above, we
may assume Theorem \ref{clean}$(\eta, n)$. We also use
Theorem~\ref{stablet}, Lemma~\ref{mon} (i) and $C= 2C'$:
 \begin{eqnarray*}
|Forb(n,T_5)-S(n)|&\le & |Forb(n,T_5)-Forb(n,T_5,\eta)| +
|Forb(n,T_5,\eta)-\SS(n)|\\
& \le & \ 2^{(1-\nu){2n^3}/{27}}  + C'
2^{-n/10}S(n)\\
&\le & C'2^{-n/10}S(n)+C'2^{-n/10}S(n) \\
& = & C 2^{-n/10}S(n).\end{eqnarray*}
  \qed

\end{document}